\documentclass[11pt]{article}

\usepackage{geometry}   
\usepackage{authblk}

\usepackage[utf8]{inputenc} 
\usepackage{hyperref}       
\usepackage{url}            
\usepackage{booktabs}       
\usepackage{amsfonts}       
\usepackage{nicefrac}       
\usepackage{microtype}      

\usepackage{natbib}

\usepackage{amsmath,amsfonts,amssymb,amsthm,array}

\usepackage{algorithm}
\usepackage{caption}
\usepackage{subcaption}
\usepackage{algpseudocode}
\usepackage{mdframed} 
\usepackage{thmtools}

\usepackage{booktabs} 
\usepackage{makecell}

\newcommand{\eqdef}{\; { := }\;}

\newcommand{\R}{\mathbb{R}}

\newcommand{\E}{\mathbb{E}}

\newcommand{\I}{\mathcal{I}}

\newcommand{\cD}{\mathcal{D}}

\DeclareMathOperator*{\argmin}{argmin}

\usepackage[colorinlistoftodos,bordercolor=orange,backgroundcolor=orange!20,linecolor=orange,textsize=scriptsize]{todonotes}

\declaretheorem[style=shaded,within=section]{definition}
\declaretheorem[style=shaded,sibling=definition]{theorem}
\declaretheorem[style=shaded,sibling=definition]{proposition}
\declaretheorem[style=shaded,sibling=definition]{assumption}
\declaretheorem[style=shaded,sibling=definition]{corollary}

\declaretheorem[style=shaded,sibling=definition]{lemma}

\title{\bf Convergence Analysis of Muon-type Methods with Inexact LMO in the Degenerate Case}

\date{}

\author[1]{Xun Qian\thanks{email: \texttt{xun.qian@kaust.edu.sa}}}
\author[1]{Peter Richt\'arik\thanks{email: \texttt{peter.richtarik@kaust.edu.sa}}}

\affil[1]{King Abdullah University of Science and Technology, Thuwal, Saudi Arabia}

\begin{document}
	
	\maketitle
	
	\begin{abstract}
		Muon-type methods have demonstrated potentially superior performance over Adam and its variants, and have shown hyperparameter transferability across model sizes when specific norms are chosen for the LMO in deep architectures. However, while the LMO is solved approximately via iterative algorithms in practice, most convergence analyses consider the ideal case where the search direction is the exact solution to the LMO. Recently, the inexact Muon update was analyzed by \citet{shulgin2025beyond}, which reveals a fundamental coupling between the inexactness and the optimal step size and momentum. However, the convergence is guaranteed for the non-degenerate case only, i.e., the smallest positive singular value of the rescaled momentum is assumed to be bounded below by some positive constant when the spectral norm is used. In this work, we investigate Muon-type methods with inexact LMO in the degenerate case, where the smallest positive singular value of the rescaled momentum can approach zero, for the general non-convex case and the star-convex case with weight decay. Novel assumptions are proposed to address the challenges posed by inexact LMO in such degenerate scenarios, and convergence rates are established under the layer-wise $(L^0, L^1)$-smooth assumption for both cases. 
	\end{abstract}

	\section{Introduction}

	In this work we consider the following optimization problem
	\begin{equation}\label{p:p}
		\min_{X \in {\cal S}} \left\{  f(X) \eqdef \E_{\xi \sim \cD} [f_{\xi} (X) ] \right\}, 
	\end{equation}
	where ${\cal S}$ is a finite-dimensional vector space, $f_\xi: {\cal S} \to \R$ are potentially non-convex and non-smooth but continuously differentiable functions that represent the loss of model parameterized by $X \in {\cal S}$ associated with training data point $\xi$, and $\cD$ is the probability distribution of the training data. We assume that $f(\cdot)$ has a lower bound, i.e., $\inf_{X \in {\cal S}} f(X) > -\infty$ throughout the paper.

	As deep learning models and training datasets scale rapidly, efficient optimizers have become increasingly critical. After more than a decade of dominance by Adam \citep{kinga2015method} and its variants \citep{reddi2019convergence,loshchilovdecoupled,xie2024adan}, Muon \citep{jordan6muon} has demonstrated potentially superior performance over these methods. Muon approximately orthogonalizes the momentum via Newton-Schulz iterations \citep{higham2008functions}, which were introduced in \citep{bernstein2024old} as a computationally efficient alternative to approximate SVD. This orthogonalization strategy can be traced back to the stochastic spectral descent (SSD) method \citep{carlson2015stochastic,carlson2015stochastic-2,carlson2015preconditioned}, which performs steepest descent in the spectral norm. 
	
	The convergence of Muon has been established in several works \citep{li2025note,pethick2025training,kovalev2025understanding,shen2025convergence}. The update rule of Muon is characterized and extended via the linear minimization oracle (LMO) that accommodates arbitrary norms and layer‑wise structure in \citep{pethick2025training}. Furthermore, by selecting specific norms within the LMO framework, the resulting algorithm, Scion, exhibits the transferability of hyperparameters across model sizes. Gluon, proposed in \citep{riabinin2025gluon}, adopts the layer-wise $(L^0, L^1)$-smooth assumption \citep{zhang2019gradient} to capture the layer-wise geometry of neural networks, providing convergence guarantees with strong practical predictive power. Muon has also inspired many subsequent developments, including variance‑reduced methods \citep{sfyraki2025lions,chang2025convergence,huang2025limuonlightfastmuon,qian2025muon,kovalev2025non} and compressed methods for distributed learning \citep{gruntkowska2025error,zhang2025provable,qian2026communication,mishra2026signmuon,su2026muonq}.

	The LMO framework has the update of the form $X^{k+1} = X^k + \gamma_k D^k$. The search direction $D^k$ is the solution to the LMO: 
	\begin{equation}\label{eq:dkLMO}
		D^k \eqdef \arg\min \{  \langle M^k, D\rangle : \|D \|\leq 1  \}, 
	\end{equation}
	where $M^k$ is the momentum term, $\langle \cdot, \cdot \rangle$ refers to an inner product (trace inner product in matrix spaces), and $\|\cdot\|$ refers to an arbitrary norm. The idealized Muon has the above update with the spectral norm $\| \cdot\|_2$. 
	
	While most convergence analyses of Muon‑type methods consider the ideal update in the LMO (i.e., the exact solution to problem (\ref{eq:dkLMO})), the inexact Muon update has been studied exclusively by \citet{shulgin2025beyond}, which reveals a fundamental coupling between this inexactness and the optimal step size and momentum. Low‑rank Muon, proposed in \citep{he2025low}, can also be regarded as an inexact version of Muon, but it considers a different regime from ours: the search direction is the exact matrix sign of $M_Q^k$, where $M_Q^k$ is a low‑rank approximation of $M^k$. The inexact polar decomposition for Muon with Nesterov momentum was recently considered in \citep{choudhury2026muon}, but they only address the non‑degenerate case (where $\bar \gamma$ is required to be strictly less than $1$).
	
	To characterize the LMO approximation, \citet{shulgin2025beyond} propose the following assumption, under which convergence is guaranteed provided that the upper bound of ${ \delta_k }$ is strictly less than one.
	
	\begin{assumption}\label{as:dknorm-in}
		Let $D^k$ be the exact solution to (\ref{eq:dkLMO}), The inexact solution ${\hat D}^k$ is assumed to satisfy an additive error bound for some $\delta_k \geq 0$: 
		\begin{equation}\label{eq:dkdeltak}
			\| {\hat D}^k - D^k\| \leq \delta_k. 
		\end{equation}
	\end{assumption}

	\subsection{Three limitations of Assumption \ref{as:dknorm-in}}
	We consider problem (\ref{eq:dkLMO}) with the spectral norm $\|\cdot\|_2$. For $M^k \in \R^{m\times n}$ with rank $r\leq m \leq n$ ( without loss of generality, we assume $m\leq n$ for simplicity), let $M^k = U\Sigma V^\top$ be the SVD decomposition, where $U \in \R^{m\times m}$, $\Sigma \in \R^{m\times n}$, and $V \in \R^{n\times n}$. $\Sigma$ is a diagonal matrix with diagonal entries $\Sigma_{ii} = \sigma_i$ for $1\leq i \leq r$, where $\{ \sigma_i  \}_{i=1}^r$ are the singular values with $\sigma_1\geq \cdots \geq \sigma_r>0$, and $\Sigma_{ii} = 0$ for $i>r$. There are several limitations of the Assumption \ref{as:dknorm-in}. 
	
	(1) First, the exact solution to (\ref{eq:dkLMO}) is used in Assumption \ref{as:dknorm-in}, but the solution is not unique when $r<m$. Let $U = [U_r, U_\perp]$ and $V = [V_r, V_\perp]$, where $U_r \in \R^{m\times r}$ and $V_r \in \R^{n\times r}$. Then the solution of problem (\ref{eq:dkLMO}) is $D^k = - U_r V_r^\top - U_\perp Y V_\perp^\top$, for any $Y \in \R^{(m-r) \times (n-r)}$ with $\|Y\|_2 \leq 1$. The $D^k$ in the idealized Muon \citep{jordan6muon} is actually slightly different, which is the solution of the following problem:
	\begin{equation}\label{eq:dk-idealMuon}
		D^k \eqdef - \arg\min \{ \|D - M^k\|_{\rm F} : D^\top D = I \ {\rm or} \ DD^\top = I \}, 
	\end{equation}
	where $\|\cdot\|_{\rm F}$ is the Frobenius norm. The solution $D^k = -U_r V_r^\top - U_\perp Z V_\perp^\top$, for any $Z \in \R^{(m-r) \times (n-r)}$ with $ZZ^\top = I$. The iterative algorithms for solving (\ref{eq:dkLMO}), for example Newton-Schulz \citep{higham2008functions} and Polar Express \citep{amsel2025polar}, actually converge to the solution $D^k = -U_rV_r^\top$. 
	
	(2) To solve (\ref{eq:dkLMO}), $M^k$ is actually rescaled by $\|M^k\|_{\rm F} + \epsilon_{ns}$ to ensure that all its singular values lie in the interval $[0,1]$, and also to enhance numerical stability. Then the $r$ positive singular values for the rescaled matrix become 
	\begin{equation}\label{eq:sv-rmk}
		\frac{\sigma_i}{\sqrt{\sum_{i=1}^r \sigma_i^2} + \epsilon_{ns}}, 
	\end{equation}
	for $1\leq i\leq r$. Iterative algorithms for solving (\ref{eq:dkLMO})—such as Newton–Schulz \citep{higham2008functions}, QWHD iteration \citep{nakatsukasa2010optimizing,nakatsukasa2013stable}, Zolotarev polar decomposition \citep{nakatsukasa2016computing}, and Polar Express \citep{amsel2025polar}—typically require a positive lower bound on the singular values of the rescaled matrix $M^k$ to ensure convergence. However, when $\sigma_r \ll \sigma_1$, the smallest positive singular value in (\ref{eq:sv-rmk}) approaches zero, which implies that $\delta_k$ in Assumption \ref{as:dknorm-in} approaches $1$. Yet, the convergence guarantee in \citet{shulgin2025beyond} requires the upper bound on $\delta_k$ in Assumption \ref{as:dknorm-in} to be strictly less than $1$. 
	
	(3) When the algorithm converges, $\|\nabla f(x^k)\|_{\star}$ and $\|M^k - \nabla f(x^k)\|_\star$ tend to zero, where $\|\cdot\|_\star$ is the dual norm of $\|\cdot\|$. Consequently, $\|M^k\|_\star$ also tends to zero. Thus, when $\sigma_1 \ll \epsilon_{ns}$, the singular values in (\ref{eq:sv-rmk}) approach zero as well.

	When a matrix is downcast to a lower arithmetic precision, the small singular values tend to increase \citep{boutsikas2024small}, and then from their results, it is claimed in \citep{amsel2025polar} that for matrices stored in floating point arithmetic, the singular values are usually larger than machine precision. However, the machine precision of double precision, which is about $2.2\times 10^{-16}$, is extremely small. Furthermore, we construct two examples in Appendix \ref{sec:twoexa} to show that when a matrix in double precision is demoted to lower precision, the smallest singular value can be much smaller than the corresponding machine precision. Specifically, the smallest singular value for bfloat16 \citep{kalamkar2019study} can be on the order of $10^{-12}$ in some cases. This raises the following key problem: 
	\vskip 2mm
	{\em Can we still get the convergence of Muon-type methods with the inexact LMO in the degenerate case where $\delta_k$ in Assumption \ref{as:dknorm-in} approaches $1$? }
	\vskip 2mm
	In this work, we address the problem in the affirmative.

	\subsection{Contributions}
	
	Our key contributions can be summarized as follows:
	\begin{itemize}
		\item[(1)] We propose novel assumptions to tackle the degenerate scenario, and establish convergence guarantees for Gluon with the inexact LMO under the layer-wise $(L^0, L^1)$-smooth assumption. Our results characterize the relationship between the precision and the lower bound required by the iterative solver for the LMO. 
		
		\item[(2)] We establish convergence for Gluon with the inexact LMO under the layer-wise $(L^0, L^1)$-smooth assumption in the deterministic setting. Furthermore, we obtain the linear convergence rate under the additional layer-wise P\L{} condition, which improves upon the sublinear convergence rate of Gluon in the case where $L_i^1>0$ \citep{riabinin2025gluon}. 
		
		\item[(3)] We study Gluon with the inexact LMO and weight decay in the star-convex setting, and establish convergence under the layer-wise $(L^0, L^1)$-smooth assumption for both deterministic and stochastic cases. The resulting convergence rates improve upon those in the general convex setting. When the LMO is solved exactly, the convergence of Muon with weight decay in the star‑convex setting was established in \citep{kovalev2025understanding}; however, our convergence result under the layer‑wise $(L^0, L^1)$-smooth assumption is novel. 
	\end{itemize}
	
	\section{Solution to the limitations}
	
	In this section, we discuss how to address the three limitations of Assumption \ref{as:dknorm-in}. First, we deal with the first two limitations by proposing the following conditions: $\|{\hat D}^k \| \leq 1+\delta_k$ and $\langle M^k, {\hat D}^k \rangle \leq -(1-\delta_k) \|M^k\|_\star$\footnote{The objective function $\langle M^k, \cdot \rangle$ in the LMO was also recently used in \citep{choudhury2026muon} to characterize the inexact polar decomposition.}. The minimum of $\langle M^k, D \rangle$ over $\|D\|\leq 1$ is actually $-\|M^k\|_\star$. It is easy to see that Assumption \ref{as:dknorm-in} is a sufficient condition for our proposed conditions. By using the objective function $\langle M^k, \cdot \rangle$ in (\ref{eq:dkLMO}), we can deal with the case where the solution to (\ref{eq:dkLMO}) is not unique. Next we consider the case in the second limitation where $\sigma_1$ is not small, but $\sigma_r/\sigma_1$ approaches zero. When the norm in (\ref{eq:dkLMO}) is the spectral norm, for $\sigma_1 \geq \epsilon_{ns}$, assume ${\tilde r} \leq r$ is an integer such that $\sigma_i \geq \frac{c\sigma_1}{r}$ for $i\leq \tilde r$ and $\sigma_i < \frac{c\sigma_1}{r}$ for the rest $i$, for some constant $0<c\leq 1$. Then for $i\leq {\tilde r}$, (\ref{eq:sv-rmk}) is bounded below by $\frac{c}{r(\sqrt{r}+1)}$. For the above lower-bound, we can use some iterative algorithm to get ${\hat D}^k = -U{\hat \Lambda}V^\top$ such that $|{\hat \Lambda}_{ii} - 1| \leq {\tilde \delta}_k$ for $i \leq {\tilde r}$, and $0\leq {\hat \Lambda}_{ii} \leq 1+ (c+{\tilde \delta}_k)/(1+c)$ for $i>{\tilde r}$. Then, from Proposition \ref{pro:computedelk}, our proposed conditions are satisfied with $\delta_k = (c+{\tilde \delta}_k)/(1+c)$. To further handle the case in the last limitation where all the singular values of $M^k$ approach zero, we propose the following assumption. 
	
	\begin{assumption}\label{as:inexactLMO}
		The inexact solution ${\hat D}^k$ to (\ref{eq:dkLMO}) and $M^k$ are assumed to satisfy the following conditions: i) $\| {\hat D}^k\| \leq 1 + \delta_k$; ii) either $\|M^k\|_\star \leq \epsilon_1$ or 
		\begin{equation}\label{eq:hatdk-obj}
			\langle M^k, {\hat D}^k \rangle \leq -(1-\delta_k) \|M^k\|_\star, 
		\end{equation}
		where $\epsilon_1\geq 0$ and $0\leq \delta_k<1$. 
	\end{assumption}
	
	Assumption \ref{as:inexactLMO} enables us to treat separately the case where all singular values of $M^k$ approach zero and the case where $\|M^k\|_\star > \epsilon_1$ (requiring condition (\ref{eq:hatdk-obj}) only in this regime). When the norm in (\ref{eq:dkLMO}) is the spectral norm, for $\|M^k\|_\star > \epsilon_1$, we know $\sigma_1 \geq \frac{\epsilon_1}{r}$. Then from Proposition \ref{pro:computedelk}, for any $0<c\leq 1$, the rescaled singular value (\ref{eq:sv-rmk}) is lower below by $l = \frac{c\epsilon_1}{r\sqrt{r} (\epsilon_1 + \sqrt{r}\epsilon_{ns})}$ for $i\leq \tilde r$ ($\tilde r$ is defined in Proposition \ref{pro:computedelk} (i)), and condition (\ref{eq:hatdk-obj}) can be satisfied with $\delta_k = (c+{\tilde \delta}_k)/(1+c)$, where ${\tilde \delta}_k$ is the precision of the iterative algorithm for solving (\ref{eq:dkLMO}).

	Note that when $\|M^k\|_\star \leq \epsilon_1$, $\langle M^k, {\hat D}^k \rangle \leq 0$ can be easily satisfied. Hence, we also propose the following variant of Assumption \ref{as:inexactLMO}.

	\begin{assumption}\label{as:inexactLMO-2}
		The inexact solution ${\hat D}^k$ to (\ref{eq:dkLMO}) and $M^k$ are assumed to satisfy the following conditions: i) $\| {\hat D}^k\| \leq 1 + \delta_k$; ii) either $\|M^k\|_\star \leq \epsilon_1$ and $\langle M^k, {\hat D}^k \rangle \leq 0$ or 
		\begin{equation}\label{eq:hatdk-obj-2}
			\langle M^k, {\hat D}^k \rangle \leq -(1-\delta_k) \|M^k\|_\star, 
		\end{equation}
		where $\epsilon_1\geq 0$ and $0\leq \delta_k<1$. 
	\end{assumption}
	
	\paragraph{The relations between the lower bound $l$, ${\tilde \delta}^k$, and the inner iteration number $T$} We use the iterative algorithm Polar Express as an example. From Theorem 4.3 in \citep{amsel2025polar}, for the odd polynomial with degree $2q+1$, to reach the precision $|{\hat \Lambda}_{ii} - 1| \leq {\tilde \delta}_k$ for $i \leq {\tilde r}$, it is sufficient to choose $T$ such that 
	$$
	(1-l^2)^{(q+1)^T} \leq {\tilde \delta}_k. 
	$$
	Then we can choose $T \geq \frac{1}{\ln(q+1)} \left(  2\ln \frac{1}{l} + \ln \ln \frac{1}{{\tilde \delta}_k}  \right)$ to satisfy the above condition. Next we analyze the polynomial behavior on $[0, l]$. From Proposition \ref{pro:ptv}, we know $p_t(x) \in [0, l_{t+1}]$ ($p_t(\cdot)$ is the used polynomial at the $t$-th iteration) on $[0, l_t]$ for $t=1, ..., T$. Hence we have $0 \leq {\hat \Lambda}_{ii} \leq l_{T+1} \leq 1$ for $i > {\tilde r}$.

	\section{The convergence of Gluon under inexact LMO}

	In this work, we consider the layer-wise case, where $X = [X_1, ..., X_p]$ with $X_i \in {\cal S}_i$, and denote the used norm in ${\cal S}_i$ as $\|\cdot\|_{(i)}$ with associated dual norm $\|\cdot\|_{(i)\star}$. We extend Assumptions \ref{as:inexactLMO} and \ref{as:inexactLMO-2} to the layer-wise case as follows. 
	\begin{assumption}\label{as:inexactLMO-layer}
		The inexact solution ${\hat D}_i^k$ to the following LMO:
		\begin{equation}\label{eq:dkLMO-layer}
			D_i^k \eqdef \arg\min \{  \langle M_i^k, D_i \rangle : \|D_i\|_{(i)}\leq 1  \}, 
		\end{equation}
		and $M_i^k$ are assumed to satisfy the following conditions: i) $\| {\hat D}_i^k\|_{(i)} \leq 1 + \delta_{k, i}$; ii) either $\|M_i^k\|_{(i)\star} \leq \epsilon_{1,i}$ or 
		\begin{equation}\label{eq:hatdk-obj-layer}
			\langle M_i^k, {\hat D}_i^k \rangle \leq -(1-\delta_{k, i}) \|M_i^k\|_{(i)\star}, 
		\end{equation}
		where $\epsilon_{1, i}\geq 0$ and $0\leq \delta_{k, i}<1$. 
	\end{assumption}
	
	\begin{assumption}\label{as:inexactLMO-layer-2}
		The inexact solution ${\hat D}_i^k$ to (\ref{eq:dkLMO-layer}) and $M_i^k$ are assumed to satisfy the following conditions: i) $\| {\hat D}_i^k\|_{(i)} \leq 1 + \delta_{k, i}$; ii) either $\|M_i^k\|_{(i)\star} \leq \epsilon_{1,i}$ and $\langle M_i^k, {\hat D}_i^k \rangle \leq 0$ or 
		\begin{equation}\label{eq:hatdk-obj-layer-2}
			\langle M_i^k, {\hat D}_i^k \rangle \leq -(1-\delta_{k, i}) \|M_i^k\|_{(i)\star}, 
		\end{equation}
		where $\epsilon_{1, i}\geq 0$ and $0\leq \delta_{k, i}<1$. 
	\end{assumption}
	
	From the above assumptions, we define a sequence set ${\cal I}^k$ for $k\geq 0$ as 
	$$
	\I^k \eqdef \{  i \in \{1, ..., p\} |  \langle M_i^k, {\hat D}_i^k \rangle > -(1-\delta_{k, i}) \|M_i^k\|_{(i)\star}  \}. 
	$$
	Then under Assumptions \ref{as:inexactLMO-layer} and \ref{as:inexactLMO-layer-2}, we know that for any $i \in \I^k$, we have $\|M_i^k\|_{(i)\star} \leq \epsilon_{1,i}$. 
	
	We adopt the following layer-wise $(L^0, L^1)$-smoothness assumption used in Gluon \citep{riabinin2025gluon} to enable us to explore the layer-wise geometry of neural networks in the training. 
	
	\begin{assumption}\label{as:L0L1smooth}
		The function $f: \mathcal{S} \mapsto \mathbb{R}$ is layer-wise $(L^0, L^1)$-smooth with constants $L^0 := (L^0_1, \ldots, L^0_p) \in \mathbb{R}_+^p$ and $L^1 := (L^1_1, \ldots, L^1_p) \in \mathbb{R}_+^p$. That is, the inequality
		\begin{eqnarray*}
			\quad  \|\nabla_i f(X) - \nabla_i f(Y)\|_{(i)\star} \leq \left(L^0_i + L^1_i \|\nabla_i f(X)\|_{(i)\star}\right) \|X_i - Y_i\|_{(i)}
			\label{eq:layer_smooth}
		\end{eqnarray*}
		holds for all $i = 1, \ldots, p$ and all $X = [X_1, \ldots, X_p] \in \mathcal{S}$, $Y = [Y_1, \ldots, Y_p] \in \mathcal{S}$.
	\end{assumption}
	
	\begin{assumption}\label{as:boundedvariance}
		The stochastic gradient estimator $\nabla f_\xi : {\cal S} \to {\cal S}$ is unbiased and has bounded variance. That is, $\E_{\xi \sim {\cal D}} \left[  \nabla f_\xi (X)  \right] = \nabla f(X)$ for all $X \in {\cal S}$ and there exists $\sigma \geq 0$ such that
		$$
		\E_{\xi \sim {\cal D}} \left[  \|\nabla_i f_\xi (X) - \nabla_i f(X) \|_{\rm F}^2 \right] \leq \sigma^2, 
		$$
		for any $X \in {\cal S}$ and $ i = 1, ..., p$. 
	\end{assumption}

	\begin{assumption}\label{as:rho}
		$\| X\|_{(i)\star} \leq \rho\|X\|_{\rm F}$ for any $X$ and $i \in \{1, ..., p\}$. 
	\end{assumption}

	\begin{algorithm}
		\caption{Gluon with inexact LMO}\label{alg:gluon-inexact}
		\begin{algorithmic}[1]
			\State \textbf{Input:} Initial model parameters $X^0 = [X_1^0, \dots, X_p^0] \in \mathcal{S}$, momentum $M^0 = [M_1^0, \dots, M_p^0] \in \mathcal{S}$, momentum decay factors $\beta \in [0, 1)$ for all iterations $k \geq 0$
			\For{$k = 0, 1, 2, \dots, K - 1$}
			\State Sample $\xi^k \sim \mathcal{D}$
			\For{$i = 1, 2, \dots, p$}
			\State Compute stochastic gradient $\nabla_i f_{\xi^k}(X^k)$  for layer $i$
			\State Update momentum $M_i^k =  \beta M_i^{k-1} + (1-\beta) \nabla_i f_{\xi^k} (X^k) $ for layer $i$
			\State Choose adaptive stepsize/radius $t_i \eta > 0$  for layer $i$
			\State Compute inexact LMO with $M_i^k$ for layer $i$: $${\hat D}_i^k \approx \argmin_{\|D_i\|_{(i)}\leq 1} \langle M_i^k, D_i \rangle $$
			\State Update parameters for layer $i$: $X_i^{k+1} = X_i^k + t_i \eta {\hat D}_i^k$
			\EndFor
			\State Update full parameter vector $X^{k+1} = [X_1^{k+1}, \dots, X_p^{k+1}]$
			\EndFor
		\end{algorithmic}
	\end{algorithm}

	From above assumptions, we can obtain the following convergence results for Gluon with inexact LMO (Algorithm \ref{alg:gluon-inexact}). 
	
	\begin{theorem}\label{th:Inexact-gluon}
		Let Assumptions \ref{as:L0L1smooth}, \ref{as:boundedvariance}, and Assumption \ref{as:rho} hold. Let Assumption \ref{as:inexactLMO-layer} or \ref{as:inexactLMO-layer-2} hold with $\delta_{k, i} = \delta <1$. Define the constant $\phi=2$ if Assumption \ref{as:inexactLMO-layer} holds, and $\phi = 1-\delta$ if Assumption \ref{as:inexactLMO-layer-2} holds. Let $X^0, ..., X^{K-1}$ be the iterates of Algorithm \ref{alg:gluon-inexact}, $\alpha \eqdef 1-\beta$, and $M_i^{-1} = \nabla_i f_{\xi^0} (X^0)$. 
		
		1. If $L_i^1 =0$, then
		\begin{eqnarray*}
			&& \min_{k=0, ..., K-1} \sum_{i=1}^p t_i \E \left[  \|\nabla_i f(X^k) \|_{(i)\star} \right] \\ 
			&\leq&  \frac{1}{1-\delta} \left[ \frac{\Delta^0}{\eta K} + \frac{2\sum_{i=1}^p t_i \rho \sigma}{\alpha K} + 2\sqrt{\alpha} \sum_{i=1}^p t_i \rho \sigma + \frac{2(1+\delta)\sum_{i=1}^p L_i^0t_i^2 \eta}{\alpha} \right. \\ 
			&& \left. \hskip 14mm  + \frac{(1+\delta)^2\sum_{i=1}^p L_i^0 t_i^2 \eta}{2}  + \sum_{i=1}^p \phi t_i\epsilon_{1, i}  \right], 
		\end{eqnarray*}
		where $\Delta^0 \eqdef f(X^0) - \inf_{X \in {\cal S}} f(X)$. 
		
		2. If $L_i^1 \neq 0$, we let $\frac{\eta}{\alpha} \leq \min_{i}  \frac{1}{6(1+\delta)L_i^1 t_i}$. Then $\left(  \frac{2(1+\delta)}{\alpha} + \frac{(1+\delta)^2}{2}  \right) L_i^1 t_i \eta \leq \frac{1}{2}$ for all $i$, and 
		\begin{eqnarray*}
			&& \min_{k=0, ..., K-1} \sum_{i=1}^p t_i \E \left[  \|\nabla_i f(X^k) \|_{(i)\star} \right] \\ 
			&\leq&   \frac{1}{1-\delta} \left[ \frac{2\Delta^0}{\eta K} + \frac{4\sum_{i=1}^p t_i \rho \sigma}{\alpha K} + 4\sqrt{\alpha} \sum_{i=1}^p t_i \rho \sigma + \frac{4(1+\delta)\sum_{i=1}^p L_i^0t_i^2 \eta}{\alpha} \right. \\ 
			&& \left. \hskip 14mm  + {(1+\delta)^2\sum_{i=1}^p L_i^0 t_i^2 \eta}  + \sum_{i=1}^p 2 \phi t_i\epsilon_{1, i}  \right]. 
		\end{eqnarray*}
	\end{theorem}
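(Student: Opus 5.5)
The plan follows the standard Gluon analysis (as in \citep{riabinin2025gluon}), with the exact-LMO identity replaced by the inexact conditions and an extra error term for layers in $\I^k$.

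\textbf{Descent inequality.} First I derive a layer-wise descent lemma from Assumption \ref{as:L0L1smooth}:
\[
f(Y)\le f(X)+\langle \nabla f(X),Y-X\rangle+\sum_i \frac{L_i^0+L_i^1\|\nabla_i f(X)\|_{(i)\star}}{2}\|Y_i-X_i\|_{(i)}^2 ,
\]
obtained by integrating along the segment. I apply it with $Y=X^{k+1}$, $Y_i-X_i=t_i\eta\hat D_i^k$, and use $\|\hat D_i^k\|_{(i)}\le 1+\delta$. The linear term I split as
\[
\langle \nabla_i f(X^k),\hat D_i^k\rangle=\langle M_i^k,\hat D_i^k\rangle+\langle \nabla_i f(X^k)-M_i^k,\hat D_i^k\rangle .
\]
The second piece is at most $(1+\delta)\|M_i^k-\nabla_i f(X^k)\|_{(i)\star}$. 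For the first piece, if $i\notin\I^k$ I get $\le -(1-\delta)\|M_i^k\|_{(i)\star}\le -(1-\delta)\|\nabla_i f(X^k)\|_{(i)\star}+(1-\delta)\|M_i^k-\nabla_i f(X^k)\|_{(i)\star}$.

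If $i\in\I^k$, then $\|M_i^k\|_{(i)\star}\le\epsilon_{1,i}$, and the two assumptions are handled differently.
\begin{itemize}
\item Under Assumption \ref{as:inexactLMO-layer-2}, $\langle M_i^k,\hat D_i^k\rangle\le 0=-(1-\delta)\|M_i^k\|_{(i)\star}+(1-\delta)\|M_i^k\|_{(i)\star}$. This gives the same bound plus $(1-\delta)\epsilon_{1,i}$, i.e.\ $\phi=1-\delta$.
\item Under Assumption \ref{as:inexactLMO-layer}, I use $|\langle M_i^k,\hat D_i^k\rangle|\le(1+\delta)\epsilon_{1,i}$ and add and subtract $(1-\delta)\|M_i^k\|_{(i)\star}$. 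This gives an extra $\le 2\epsilon_{1,i}$, i.e.\ $\phi=2$.
\end{itemize}
In both cases, writing $e_i^k=\|M_i^k-\nabla_i f(X^k)\|_{(i)\star}$ and $g_i^k=\|\nabla_i f(X^k)\|_{(i)\star}$,
\[
f(X^{k+1})\le f(X^k)-\eta(1-\delta)\sum_i t_ig_i^k+2\eta\sum_i t_ie_i^k+\eta\sum_i\phi t_i\epsilon_{1,i}+\frac{(1+\delta)^2\eta^2}{2}\sum_i(L_i^0+L_i^1g_i^k)t_i^2 .
\]

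\textbf{Momentum error.} Next I bound $\E[e_i^k]$ by the usual recursion. Write $M_i^k-\nabla_i f(X^k)=\beta(M_i^{k-1}-\nabla_i f(X^{k-1}))+\beta(\nabla_i f(X^{k-1})-\nabla_i f(X^k))+\alpha(\text{noise}_k)$ and unroll. The noise sum is controlled via Assumptions \ref{as:boundedvariance} and \ref{as:rho}: the martingale terms in Frobenius norm give $\rho\sigma\sqrt{\alpha}$ in steady state, and the initial term gives $\beta^{k}\rho\sigma$, which sums to $\rho\sigma/\alpha$. The gradient-drift terms are bounded with Assumption \ref{as:L0L1smooth} by $(L_i^0+L_i^1 g_i^{j})t_i\eta(1+\delta)$ and weighted by $\beta^{k-j}$. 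Summing over $k$, each $j$ contributes with total weight $\le 1/\alpha$, so
\[
\sum_k\E[e_i^k]\le \frac{\rho\sigma}{\alpha}+K\sqrt{\alpha}\rho\sigma+\frac{(1+\delta)t_i\eta}{\alpha}\sum_k\bigl(L_i^0+L_i^1\E[g_i^k]\bigr).
\]

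\textbf{Conclusion.} I sum the descent inequality over $k=0,\dots,K-1$, telescope with $\Delta^0$, divide by $\eta K(1-\delta)$, and lower bound the average by the minimum. When $L_i^1=0$ this yields the first claimed bound directly. When $L_i^1\neq0$, the terms multiplying $\E[g_i^k]$ carry the coefficient $\bigl(\tfrac{2(1+\delta)}{\alpha}+\tfrac{(1+\delta)^2}{2}\bigr)L_i^1t_i\eta$. The step-size condition bounds this by $\frac{1}{3}+\frac{(1+\delta)L_i^1t_i\eta}{2}$. Since $\alpha\le1$ and $\eta\le\alpha/(6(1+\delta)L_i^1t_i)$, the second part is at most $\tfrac1{12}$, so the coefficient is $\le\tfrac12$. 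I absorb it into the left side (the $(1-\delta)$ factor needs care here), which costs a factor $2$ and gives part 2.

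The main obstacle is the bookkeeping of the $\I^k$ case under Assumption \ref{as:inexactLMO-layer} without losing the $(1-\delta)$ structure. Getting exactly the constants $2$ and $\phi$ stated also requires care, in particular in the absorption step where the $L_i^1$ terms must be compared against $(1-\delta)t_ig_i^k$.
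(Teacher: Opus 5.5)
Your plan follows the paper's proof step for step. It uses the same split of the linear term, giving the coefficient $(1-\delta)+(1+\delta)=2$ on $\|M_i^k-\nabla_i f(X^k)\|_{(i)\star}$. It has the same case analysis on $\mathcal{I}^k$ yielding $\phi=2$ or $\phi=1-\delta$; this bookkeeping is routine, and you do it correctly. The unrolled momentum recursion and the telescoping are also the paper's, so part 1 follows exactly as you describe. One small slip: $\frac{(1+\delta)^2}{2}L_i^1t_i\eta\le\frac{(1+\delta)\alpha}{12}\le\frac16$, not $\frac1{12}$, though the total is still $\le\frac12$.

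The step you postpone in part 2 is a genuine gap, and the paper's proof makes the same unjustified jump. Set $c_i:=\bigl(\frac{2(1+\delta)}{\alpha}+\frac{(1+\delta)^2}{2}\bigr)L_i^1t_i\eta$, so the stated condition gives only $c_i\le\frac12$. Absorbing then leaves the coefficient $1-\delta-c_i\ge\frac12-\delta$ in front of $t_i\eta\,\E[\|\nabla_i f(X^k)\|_{(i)\star}]$. The claimed bound, which is the case-1 bracket multiplied by $\frac{2}{1-\delta}$, needs this coefficient to be at least $\frac{1-\delta}{2}$. That holds only when $\delta=0$, and for $\delta\ge\frac12$ the left side may even have a nonpositive coefficient. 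The piece $\frac{2(1+\delta)}{\alpha}L_i^1t_i\eta$, which can be as large as $\frac13$, is forced by the factor $2$ on the momentum error. So nothing inside this argument compensates for the missing factor $1-\delta$.

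To close the proof, strengthen the hypothesis so that $c_i\le\frac{1-\delta}{2}$. For instance, $\frac{\eta}{\alpha}\le\min_i\frac{1-\delta}{6(1+\delta)L_i^1t_i}$ gives $c_i\le\frac{1-\delta}{3}+\frac{(1-\delta)(1+\delta)\alpha}{12}\le\frac{1-\delta}{2}$, and then exactly the stated right-hand side. Alternatively, keep the stated condition, assume $\delta<\frac12$, and replace the prefactor $\frac{2}{1-\delta}$ by $\frac{2}{1-2\delta}$.
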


	\begin{corollary}\label{co:inexact-gluon}
		Under the premise of Theorem \ref{th:Inexact-gluon}, by choosing $\eta = \frac{1}{K^{\frac{3}{4}} (1+\delta)^{\frac{1}{4}}}$ and $\alpha = \frac{\sqrt{1+\delta}}{\sqrt{K}}$, we have the following results. 
		1. If $L_i^1 =0$, then
		\begin{eqnarray*}
			\min_{k=0, ..., K-1} \sum_{i=1}^p t_i \E \left[  \|\nabla_i f(X^k) \|_{(i)\star} \right] \leq {\cal O} \left(  \frac{(1+\delta)^{1/4}}{(1-\delta) K^{1/4}}  \right)   +  \sum_{i=1}^p \frac{\phi t_i \epsilon_{1, i}}{1-\delta}. 
		\end{eqnarray*}
		2. If $L_i^1 \neq 0$ and $K \geq (1+\delta) \max_i (6L_i^1 t_i)^4$. Then $\frac{\eta}{\alpha} \leq \min_{i}  \frac{1}{6(1+\delta)L_i^1 t_i}$, and 
		\begin{eqnarray*}
			\min_{k=0, ..., K-1} \sum_{i=1}^p t_i \E \left[  \|\nabla_i f(X^k) \|_{(i)\star} \right] \leq {\cal O} \left(  \frac{(1+\delta)^{1/4}}{(1-\delta) K^{1/4}}  \right)   +  \sum_{i=1}^p \frac{2\phi t_i \epsilon_{1, i}}{1-\delta}. 
		\end{eqnarray*}
	\end{corollary}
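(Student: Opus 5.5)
The statement to prove is Corollary \ref{co:inexact-gluon}. The plan is to substitute the prescribed $\eta$ and $\alpha$ into the two bounds of Theorem \ref{th:Inexact-gluon}, check that every term except the $\epsilon_{1,i}$ terms is of order $(1+\delta)^{1/4}K^{-1/4}$, and, in the case $L_i^1\neq 0$, check that the step-size condition of the theorem holds.

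With $\eta = K^{-3/4}(1+\delta)^{-1/4}$ and $\alpha = (1+\delta)^{1/2}K^{-1/2}$, I will bound the bracketed terms one at a time. Since $1+\delta<2$, all powers of $1+\delta$ are bounded by absolute constants; I will still keep the dominant factor $(1+\delta)^{1/4}$ visible.
\begin{itemize}
\item $\frac{\Delta^0}{\eta K} = \Delta^0 (1+\delta)^{1/4} K^{-1/4}$.
\item $\frac{\sum_i t_i\rho\sigma}{\alpha K} = \sum_i t_i\rho\sigma\,(1+\delta)^{-1/2}K^{-1/2}$, which is $\mathcal O(K^{-1/2})$.
\item $\sqrt{\alpha}\sum_i t_i\rho\sigma = (1+\delta)^{1/4}K^{-1/4}\sum_i t_i\rho\sigma$.
\item $\frac{(1+\delta)\eta}{\alpha} = (1+\delta)^{1-1/4-1/2}K^{-3/4+1/2} = (1+\delta)^{1/4}K^{-1/4}$.
\item $(1+\delta)^2\eta = (1+\delta)^{7/4}K^{-3/4} \le 2^{3/2}(1+\delta)^{1/4}K^{-1/4}$.
\end{itemize}
Summing these, with the constants $\Delta^0$, $\rho\sigma\sum_i t_i$ and $\sum_i L_i^0t_i^2$ absorbed, and then multiplying by $\frac{1}{1-\delta}$ gives the $\mathcal O\big(\frac{(1+\delta)^{1/4}}{(1-\delta)K^{1/4}}\big)$ term. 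The $\epsilon_{1,i}$ terms pass through unchanged, as $\sum_i \phi t_i\epsilon_{1,i}/(1-\delta)$ in case 1 and $\sum_i 2\phi t_i\epsilon_{1,i}/(1-\delta)$ in case 2. The $\mathcal O(K^{-1/2})$ term is dominated by $K^{-1/4}$ for $K\ge 1$.

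For case 2, the only extra step is verifying the theorem's condition $\frac{\eta}{\alpha}\le \min_i \frac{1}{6(1+\delta)L_i^1t_i}$. We have $\frac{\eta}{\alpha} = (1+\delta)^{-3/4}K^{-1/4}$, so the condition is equivalent to $K^{1/4}\ge 6(1+\delta)^{1/4}L_i^1 t_i$ for every $i$, that is, $K\ge (1+\delta)(6L_i^1t_i)^4$. This is exactly the assumed lower bound on $K$. Case 2 of Theorem \ref{th:Inexact-gluon} then applies, and the same term-by-term computation gives the bound, with all constants doubled.

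No step is a real obstacle. The only care needed is the exponent bookkeeping in $(1+\delta)$, which matters mainly for the $(1+\delta)^2\eta$ term and for the equivalence in the step-size condition.
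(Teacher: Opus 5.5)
Your proposal is correct and is exactly the intended argument: the paper gives no separate proof beyond substituting $\eta$ and $\alpha$ into the two bounds of Theorem \ref{th:Inexact-gluon}. Your exponent bookkeeping checks out, including the equivalence $\eta/\alpha=(1+\delta)^{-3/4}K^{-1/4}\le \frac{1}{6(1+\delta)L_i^1t_i} \iff K\ge(1+\delta)(6L_i^1t_i)^4$. The one implicit requirement worth stating is $\alpha\le 1$, i.e.\ $\beta=1-\alpha\ge 0$, which the algorithm and the theorem's proof both need; it holds once $K\ge 1+\delta$, e.g.\ for all $K\ge 2$.
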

	
	From Corollary \ref{co:inexact-gluon}, to reach the precision 	$ \min_{k=0, ..., K-1} \sum_{i=1}^p t_i \E \left[  \|\nabla_i f(X^k) \|_{(i)\star} \right]  \leq \epsilon$, it is sufficient to choose the parameters such that $\sum_{i=1}^p \frac{2\phi t_i \epsilon_{1, i}}{1-\delta} \leq {\tilde c}\epsilon$ and ${\cal O} \left(  \frac{(1+\delta)^{1/4}}{(1-\delta) K^{1/4}}  \right) \leq (1-{\tilde c})\epsilon$ for any $0<{\tilde c}<1$.

	\section{Inexact Gluon in the deterministic case}

	In this section, we consider the Gluon with the inexact LMO in the deterministic case (Algorithm \ref{alg:gluon-inexact-deter}), that is, we calculate the full gradient at each step. Similar to the stochastic case, we propose the following assumptions related to the LMO approximation. 
	
	\begin{assumption}\label{as:inexactLMO-layer-deter}
		The inexact solution ${\hat D}_i^k$ to the following LMO: 
		\begin{equation}\label{eq:dkLMO-deter}
			D_i^k \eqdef \arg\min \{  \langle \nabla_i f(X^k), D_i \rangle : \|D_i\|_{(i)}\leq 1  \}, 
		\end{equation}
		and $\nabla_i f(X^k)$ are assumed to satisfy the following conditions: i) $\| {\hat D}_i^k\|_{(i)} \leq 1 + \delta_{k, i}$; ii) either $\|\nabla_i f(X^k)\|_{(i)\star} \leq \epsilon_{1,i}$ or 
		\begin{equation}\label{eq:hatdk-obj-layer-deter}
			\langle \nabla_i f(X^k), {\hat D}_i^k \rangle \leq -(1-\delta_{k, i}) \|\nabla_i f(X^k)\|_{(i)\star}, 
		\end{equation}
		where $\epsilon_{1, i}\geq 0$ and $0\leq \delta_{k, i}<1$. 
	\end{assumption}
	
	\begin{assumption}\label{as:inexactLMO-layer-2-deter}
		The inexact solution ${\hat D}_i^k$ to (\ref{eq:dkLMO-deter}) and $\nabla_i f(X^k)$ are assumed to satisfy the following conditions: i) $\| {\hat D}_i^k\|_{(i)} \leq 1 + \delta_{k, i}$; ii) either $\|\nabla_i f(X^k)\|_{(i)\star} \leq \epsilon_{1,i}$ and $\langle \nabla_i f(X^k), {\hat D}_i^k \rangle \leq 0$ or 
		\begin{equation}\label{eq:hatdk-obj-layer-2-deter}
			\langle \nabla_i f(X^k), {\hat D}_i^k \rangle \leq -(1-\delta_{k, i}) \|\nabla_i f(X^k)\|_{(i)\star}, 
		\end{equation}
		where $\epsilon_{1, i}\geq 0$ and $0\leq \delta_{k, i}<1$. 
	\end{assumption}
	
	Same as the stochastic case, from the above assumptions, we define a sequence set ${\cal I}_D^k$ for $k\geq 0$ as 
	$$
	\I_D^k \eqdef \{  i \in \{1, ..., p\} |  \langle  \nabla_i f(X^k), {\hat D}_i^k \rangle > -(1-\delta_{k, i}) \| \nabla_i f(X^k)\|_{(i)\star}  \}. 
	$$
	Then under Assumptions \ref{as:inexactLMO-layer-deter} and \ref{as:inexactLMO-layer-2-deter}, we know that for any $i \in \I_D^k$, we have $\| \nabla_i f(X^k)\|_{(i)\star} \leq \epsilon_{1,i}$. We can get the following convergence results for Algorithm \ref{alg:gluon-inexact-deter}.

	\begin{algorithm}
		\caption{Gluon with inexact LMO in the deterministic case}\label{alg:gluon-inexact-deter}
		\begin{algorithmic}[1]
			\State \textbf{Input:} Initial model parameters $X^0 = [X_1^0, \dots, X_p^0] \in \mathcal{S}$ 
			\For{$k = 0, 1, 2, \dots, K - 1$}
			\For{$i = 1, 2, \dots, p$}
			\State Compute gradient $\nabla_i f(X^k)$  for layer $i$
			\State Choose adaptive stepsize/radius $t_i^k > 0$  for layer $i$
			\State Compute inexact LMO with $\nabla_i f(X^k)$ for layer $i$: $${\hat D}_i^k \approx \argmin_{\|D_i\|_{(i)}\leq 1} \langle \nabla_i f(X^k), D_i \rangle $$
			\State Update parameters for layer $i$: $X_i^{k+1} = X_i^k + t_i^k {\hat D}_i^k$
			\EndFor
			\State Update full parameter vector $X^{k+1} = [X_1^{k+1}, \dots, X_p^{k+1}]$
			\EndFor
		\end{algorithmic}
	\end{algorithm}

	\begin{theorem}\label{th:gluon-deter}
		Let Assumption \ref{as:L0L1smooth} hold and fix $\epsilon>0$. Let Assumption \ref{as:inexactLMO-layer-deter} or \ref{as:inexactLMO-layer-2-deter} hold with $\delta_{k, i} = \delta <1$. Define the constant $\phi=2$ if Assumption \ref{as:inexactLMO-layer-deter} holds, and $\phi = 1-\delta$ if Assumption \ref{as:inexactLMO-layer-2-deter} holds. Let $X^0, ..., X^{K-1}$ be the iterates of Algorithm \ref{alg:gluon-inexact-deter} with stepsizes $t_i^k = \frac{(1-\delta) \|\nabla_i f(X^k)\|_{(i)\star}}{(1+\delta)^2 (L_i^0 + L_i^1 \|\nabla_i f(X^k)\|_{(i)\star})}$. Then for any $0<\tilde c<1$, 
		
		\noindent 1. In order to reach the precision 
		$$
		\min_{k=0, ..., K-1} \sum_{i=1}^p \|\nabla_i f(X^k) \|_{(i)\star} \leq \epsilon, 
		$$
		it is sufficient to choose 
		$$
		K = \left\lceil  \frac{2(1+\delta)^2 \sum_{i=1}^p L_i^0 \Delta^0}{(1-\delta)^2(1-\tilde c)\epsilon^2}  +  \frac{2(1+\delta)^2 L_{\max}^1 \Delta^0}{(1-\delta)^2 (1-\tilde c)\epsilon}  \right\rceil, 
		$$
		and $\epsilon_{1, i}$ such that $\sum_{i=1}^p \frac{\phi \epsilon_{1, i}^2}{(1-\delta) (L_i^0 + L_i^1 \epsilon_{1, i})} \leq \frac{{\tilde c}\epsilon^2}{2(\sum_{i=1}^p L_i^0 + L_{\max}^1 \epsilon)}$; 
		
		\noindent 2. In order to reach the precision 
		$$
		\min_{k=0, ..., K-1} \sum_{i=1}^p \frac{1/L_i^1}{\frac{1}{p} \sum_{i=1}^p1/L_i^1}\|\nabla_i f(X^k) \|_{(i)\star} \leq \epsilon, 
		$$
		it is sufficient to choose 
		$$
		K = \left\lceil   \frac{2(1+\delta)^2\Delta^0 (\sum_{i=1}^p \frac{L_i^0}{(L_i^1)^2})}{(1-\delta)^2(1-\tilde c)\epsilon^2 \left(  \frac{1}{p} \sum_{j=1}^p \frac{1}{L_j^1} \right)^2}   +  \frac{2(1+\delta)^2\Delta^0}{(1-\delta)^2(1-\tilde c) \epsilon \left(  \frac{1}{p} \sum_{j=1}^p \frac{1}{L_j^1} \right)} \right\rceil, 
		$$
		and $\epsilon_{1, i}$ such that $$\sum_{i=1}^p \frac{\phi \epsilon_{1, i}^2}{(1-\delta) (L_i^0 + L_i^1 \epsilon_{1, i})} \leq \frac{{\tilde c} \left(\frac{\epsilon}{p} \sum_{j=1}^p \frac{1}{L_j^1}\right)^2}{2\left(  \sum_{i=1}^p \frac{L_i^0}{(L_i^1)^2 } + \frac{\epsilon}{p} \sum_{j=1}^p \frac{1}{L_j^1} \right)}.$$

	\end{theorem}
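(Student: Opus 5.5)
The plan is a one-step descent argument built on the layer-wise $(L^0,L^1)$ descent inequality. Write $g_i^k \eqdef \|\nabla_i f(X^k)\|_{(i)\star}$ and $a_i^k \eqdef L_i^0 + L_i^1 g_i^k$. From Assumption \ref{as:L0L1smooth} I will use the Gluon-type upper bound
$$
f(X^{k+1}) \leq f(X^k) + \sum_{i=1}^p \left[ t_i^k \langle \nabla_i f(X^k), {\hat D}_i^k\rangle + \frac{a_i^k}{2} (t_i^k)^2 \|{\hat D}_i^k\|_{(i)}^2 \right],
$$
which follows by integrating $\nabla_i f$ along the segment and using the smoothness inequality anchored at $X^k$. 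With $\|{\hat D}_i^k\|_{(i)}\leq 1+\delta$, the last term is at most $\frac{(1+\delta)^2}{2}a_i^k (t_i^k)^2$.

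Next, split the layers according to $\I_D^k$.
\begin{itemize}
\item For $i\notin \I_D^k$, (\ref{eq:hatdk-obj-layer-deter}) holds. Plugging in the stated $t_i^k$ (which minimizes the resulting quadratic in $t$), the bracket is at most $-\frac{(1-\delta)^2 (g_i^k)^2}{2(1+\delta)^2 a_i^k}$.
\item For $i\in \I_D^k$, we know $g_i^k\leq \epsilon_{1,i}$. Under Assumption \ref{as:inexactLMO-layer-deter} I only have $\langle \nabla_i f, {\hat D}_i^k\rangle \leq (1+\delta) g_i^k$. Under Assumption \ref{as:inexactLMO-layer-2-deter} I have $\langle \nabla_i f, {\hat D}_i^k\rangle\leq 0$.
\item In either sub-case I will write the bracket as the same negative term $-\frac{(1-\delta)^2 (g_i^k)^2}{2(1+\delta)^2 a_i^k}$ plus a remainder of order $\frac{(g_i^k)^2}{a_i^k}$. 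Using $(1-\delta)/(1+\delta)\leq 1$, the remainder is bounded by a constant multiple of $\phi\,(g_i^k)^2/a_i^k$, which is where the two values of $\phi$ come from.
\item Since $x\mapsto x^2/(L_i^0+L_i^1x)$ is increasing on $x\ge 0$, the remainder is at most a multiple of $\phi\,\epsilon_{1,i}^2/(L_i^0+L_i^1\epsilon_{1,i})$.
\end{itemize}
Summing over $k$, telescoping, and multiplying by $\frac{2(1+\delta)^2}{(1-\delta)^2}$ should give
$$
\frac1K\sum_{k=0}^{K-1}\sum_{i=1}^p \frac{(g_i^k)^2}{a_i^k} \leq \frac{2(1+\delta)^2\Delta^0}{(1-\delta)^2K} + C\sum_{i=1}^p \frac{\phi\,\epsilon_{1,i}^2}{(1-\delta)(L_i^0+L_i^1\epsilon_{1,i})},
$$
where $C$ is an absolute constant to be tracked so that the $\epsilon_{1,i}$-condition in the statement matches.

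To convert this to the target quantities I argue by contradiction. Suppose the target exceeds $\epsilon$ for every $k$.

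For part 1, Cauchy--Schwarz gives $(\sum_i g_i^k)^2 \leq (\sum_i a_i^k)(\sum_i (g_i^k)^2/a_i^k)$. Also $\sum_i a_i^k \leq \sum_i L_i^0 + L^1_{\max}\sum_i g_i^k$. Since $x^2/(A+Bx)$ is increasing, this yields $\sum_i (g_i^k)^2/a_i^k \geq \epsilon^2/(\sum_i L_i^0 + L^1_{\max}\epsilon)$ for every $k$. Combining with the averaged bound, the $\epsilon_{1,i}$-condition absorbs a $\tilde c$ fraction of this lower bound. The stated $K$ makes the $\Delta^0/K$ term at most a $(1-\tilde c)$ fraction, which is the contradiction.

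For part 2, use weights $w_i = 1/L_i^1$ and Cauchy--Schwarz in the form $(\sum_i w_i g_i^k)^2 \leq (\sum_i w_i^2 a_i^k)(\sum_i (g_i^k)^2/a_i^k)$. Here $\sum_i w_i^2 a_i^k = \sum_i L_i^0/(L_i^1)^2 + \sum_i w_i g_i^k$, which has exactly the $A+Bx$ form with $B=1$. Normalizing by $\frac1p\sum_j 1/L_j^1$ then reproduces the stated $K$ and $\epsilon_{1,i}$ condition.

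I expect the main obstacle to be the bookkeeping for $i\in\I_D^k$. The bound there must be linear in $\epsilon_{1,i}^2/(L_i^0+L_i^1\epsilon_{1,i})$ with precisely the constant $\phi/(1-\delta)$ that appears in the statement, uniformly in both assumptions. The monotonicity of $x^2/(A+Bx)$ and Cauchy--Schwarz are otherwise routine.
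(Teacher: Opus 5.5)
Your proposal is correct and follows the paper's proof essentially step for step: the descent inequality, the split on $\I_D^k$, telescoping, and then the lower bounds via $\psi_1,\psi_2$. The only cosmetic differences are that you derive those lower bounds yourself via (weighted) Cauchy--Schwarz rather than citing the Gluon analysis, and that you conclude by contradiction instead of inverting $\psi_1,\psi_2$.

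On the bookkeeping you flagged: for $i\in\I_D^k$ the remainder equals exactly $\frac{\phi(1-\delta)(g_i^k)^2}{(1+\delta)^2 a_i^k}$ in both cases. Under the first assumption this is because $(1+\delta)+(1-\delta)=2$. This gives $C=2$ and reproduces the stated $\epsilon_{1,i}$-condition. So do not invoke $(1-\delta)/(1+\delta)\le 1$ there, since it would cost an extra factor $1+\delta$.
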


	\subsection{Convergence under the P\L{} condition}
	
	In this subsection, we consider Algorithm \ref{alg:gluon-inexact-deter} under the following layer-wise Polyak–\L{}ojasiewicz (P\L{}) condition introduced in \citep{riabinin2025gluon}. 
	
	\begin{assumption}[Layer-wise Polyak-\L ojasiewicz condition] \label{as:PL}
		The function $f: \mathcal{S} \mapsto \mathbb{R}$ satisfies the layer-wise Polyak-\L{}ojasiewicz (P\L{}) condition with a constant $\mu > 0$, i.e., for any $X \in \mathcal{S}$
		$$
		\sum_{i=1}^{p} \|\nabla_i f(X)\|_{(i)\star}^2 \geq 2\mu \left(f(X) - f^\star\right),
		$$
		where $f^\star \eqdef \inf_{X \in \mathcal{S}} f(X) > -\infty$.
	\end{assumption}

	With the additional layer-wise P\L{} condition, we can establish the following linear convergence rates under some restrictions on $\epsilon_{1, i}$. Our convergence rate improves upon the sublinear rate $\mathcal{O}(1/\epsilon)$ achieved by \citet{riabinin2025gluon} for the case where $L_i^1 > 0$. 
	
	\begin{theorem}\label{th:gluon-deter-pl}
		Let Assumption \ref{as:L0L1smooth} and Assumption \ref{as:PL} hold and fix $\epsilon>0$. Let Assumption \ref{as:inexactLMO-layer-deter} or \ref{as:inexactLMO-layer-2-deter} hold with $\delta_{k, i} = \delta <1$. Define the constant $\phi=2$ if Assumption \ref{as:inexactLMO-layer-deter} holds, and $\phi = 1-\delta$ if Assumption \ref{as:inexactLMO-layer-2-deter} holds. Let $X^0, ..., X^{K-1}$ be the iterates of Algorithm \ref{alg:gluon-inexact-deter} run with stepsizes $t_i^k = \frac{(1-\delta) \|\nabla_i f(X^k)\|_{(i)\star}}{(1+\delta)^2 (L_i^0 + L_i^1 \|\nabla_i f(X^k)\|_{(i)\star})}$. Then, 
		
		\noindent 1. If $p=1$, then to reach the precision $\min_{k=0, ..., K-1} f(X^k) - f^\star \leq \epsilon$, it is sufficient to choose $\epsilon_{1, 1} = \sqrt{2\mu \epsilon}$ and 
		$$
		K =  \left\lceil  \frac{(1+\delta)^2 (L_1^0 + \sqrt{2\mu \Delta^0}L_1^1)}{(1-\delta)^2 \mu} \ln \frac{\Delta^0}{\epsilon} \right\rceil, 
		$$
		
		\noindent 2. For any $0<\tilde c<1$, to reach the precision $f(X^K) - f^\star \leq \epsilon$, it is sufficient to choose 
		$$
		K = \left\lceil  \frac{(1+\delta)^2(L_{\max}^0 + L_{\max}^1 \sqrt{2\mu \max\{ \Delta^0, \epsilon\}})}{(1-\delta)^2\mu} \ln \frac{\Delta^0}{(1-\tilde c)\epsilon}   \right\rceil, 
		$$
		and $\epsilon_{1, i}$ such that $$\sum_{i=1}^p \frac{\phi (1-\delta)\epsilon_{1, i}^2}{(1+\delta)^2 (L_i^0 + L_i^1 \epsilon_{1, i})} \leq \frac{(1-\delta)^2\mu}{(1+\delta)^2(L_{\max}^0 + L_{\max}^1 \sqrt{2\mu \max\{ \Delta^0, \epsilon\}})} {\tilde c}\epsilon,$$ 
		where $L_{\max}^0 \eqdef \max_{i=1, ..., p} L_i^0$, $L_{\max}^1 \eqdef \max_{i=1, ..., p} L_i^1$, and $\Delta^0 \eqdef f(X^0) - f^\star$.
	\end{theorem}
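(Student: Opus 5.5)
We will derive a one-step descent inequality with an error term and then feed the layer-wise P\L{} condition into it. Write $g_i^k \eqdef \|\nabla_i f(X^k)\|_{(i)\star}$ and $\ell_i^k \eqdef L_i^0 + L_i^1 g_i^k$.

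\textbf{Step 1 (one-step descent).} By Assumption \ref{as:L0L1smooth}, integrating along the segment gives the standard layer-wise inequality
$$f(X^{k+1}) \le f(X^k) + \sum_i \langle \nabla_i f(X^k), t_i^k \hat D_i^k\rangle + \sum_i \tfrac{\ell_i^k}{2}\|t_i^k\hat D_i^k\|_{(i)}^2.$$
Use $\|\hat D_i^k\|_{(i)}\le 1+\delta$ in the quadratic term. For the linear term, split the layers by membership in $\I_D^k$.
\begin{itemize}
\item For $i\notin\I_D^k$, the inner product is at most $-(1-\delta) g_i^k$.
\item For $i\in\I_D^k$ we know $g_i^k\le\epsilon_{1,i}$. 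Under Assumption \ref{as:inexactLMO-layer-deter}, bound the inner product by $(1+\delta) g_i^k \le -(1-\delta)g_i^k + 2g_i^k$. Under Assumption \ref{as:inexactLMO-layer-2-deter}, bound it by $0 = -(1-\delta)g_i^k + (1-\delta)g_i^k$.
\end{itemize}
In both cases the inner product is at most $-(1-\delta)g_i^k + \phi\, g_i^k\mathbf{1}_{i\in\I_D^k}$. Substituting the given $t_i^k$ yields
$$f(X^{k+1})\le f(X^k) - \sum_i \frac{(1-\delta)^2 (g_i^k)^2}{2(1+\delta)^2\ell_i^k} + \sum_{i\in\I_D^k}\frac{\phi(1-\delta)(g_i^k)^2}{(1+\delta)^2\ell_i^k}.$$
Since $x\mapsto x^2/(L^0+L^1x)$ is increasing, the error term is at most $E\eqdef\sum_i \frac{\phi(1-\delta)\epsilon_{1,i}^2}{(1+\delta)^2(L_i^0+L_i^1\epsilon_{1,i})}$.

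\textbf{Step 2 (linear contraction).} Bound $\ell_i^k\le L_{\max}^0+L_{\max}^1 g_i^k$ and $g_i^k\le \|\nabla f(X^k)\|$, where $\|\nabla f\|^2\eqdef\sum_i g_i^2$. Then $\sum_i (g_i^k)^2/\ell_i^k \ge \|\nabla f(X^k)\|^2/(L_{\max}^0+L_{\max}^1\|\nabla f(X^k)\|)$.

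Next, bound $\|\nabla f(X^k)\|$ by $\sqrt{2\mu\max\{\Delta^0,\epsilon\}}$. This is the main obstacle: P\L{} gives only a lower bound on the gradient, so we need an upper bound from somewhere. We would first try to establish, by induction, that $f(X^k)-f^\star\le\max\{\Delta^0,\epsilon\}$; this follows from the contraction inequality below, which is itself a convex combination and so cannot push the gap above $\max\{\Delta^0,\epsilon\}$. We would then have to relate the gradient norm to the function gap. One option is the $(L^0,L^1)$-smoothness reverse-PL-type bound. The alternative we would try is to avoid this altogether: the map $x\mapsto x^2/(a+bx)$ is increasing, and P\L{} gives $x\ge\sqrt{2\mu(f-f^\star)}$. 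With $\Delta_k \eqdef f(X^k)-f^\star$, this reduces the decrease to a function of $\Delta_k$ alone, namely $\frac{2\mu\Delta_k}{L^0_{\max}+L^1_{\max}\sqrt{2\mu\Delta_k}}$. We then bound $\Delta_k\le\max\{\Delta^0,\epsilon\}$ using the induction.

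Writing $q\eqdef\frac{(1-\delta)^2\mu}{(1+\delta)^2(L_{\max}^0+L_{\max}^1\sqrt{2\mu\max\{\Delta^0,\epsilon\}})}$, this gives
$$\Delta_{k+1}\le(1-q)\Delta_k+E.$$

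\textbf{Step 3 (unrolling and the case $p=1$).} Unrolling gives $\Delta_K\le(1-q)^K\Delta^0+E/q$. The hypothesis on $\epsilon_{1,i}$ is exactly $E\le q\tilde c\epsilon$, and the choice of $K$ makes $(1-q)^K\Delta^0\le e^{-qK}\Delta^0\le(1-\tilde c)\epsilon$. This proves part 2.

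For part 1, set $\epsilon_{1,1}=\sqrt{2\mu\epsilon}$ and suppose $\Delta_k>\epsilon$ for all $k<K$. Then P\L{} gives $g^k>\epsilon_{1,1}$, so $\I_D^k=\emptyset$ and there is no error term. Monotonicity also gives $\Delta_k\le\Delta^0$, hence $\Delta_{k+1}\le(1-q')\Delta_k$ with $q'$ defined using $\Delta^0$. Therefore $\Delta_K\le e^{-q'K}\Delta^0\le\epsilon$, which contradicts the assumption once $K$ is as stated. Hence $\min_k\Delta_k\le\epsilon$.
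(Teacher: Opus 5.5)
Your proposal follows the paper's proof essentially step for step. It uses the same descent inequality with the $\phi$-weighted error over $\I_D^k$, and the same device of pushing the P\L{} lower bound through the increasing map $x\mapsto x^2/(a+bx)$ instead of seeking an upper bound on the gradient. The induction $\Delta_k\le\max\{\Delta^0,\epsilon\}$ and the unrolling are also the paper's. The only difference is in Step 2. The paper applies P\L{} layer by layer via the weights $a_i=g_i^2/\sum_j g_j^2$ (so $g_i\ge\sqrt{2a_i\mu\Delta_k}$) and then uses $a_i\le 1$ inside the square root. You instead collapse the sum first with $g_i\le\|\nabla f(X^k)\|$ and apply P\L{} once. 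Both give exactly the constant $q$ (the paper's $\theta$), and your version avoids the undefined weights when $\nabla f(X^k)=0$.

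One step fails as written. In part 1 you assume $\Delta_k>\epsilon$ for all $k<K$ and derive $\Delta_K\le\epsilon$. This is no contradiction, since $\Delta_K$ is not among $\Delta_0,\dots,\Delta_{K-1}$. Bounding $\Delta_{K-1}$ uses only $K-1$ contractions, giving $\Delta_{K-1}\le(1-q')^{K-1}\Delta^0$, which need not be $\le\epsilon$. Your argument, like the paper's (which also ends with $f(X^K)-f^\star\le\epsilon$), proves $\min_{0\le k\le K}\Delta_k\le\epsilon$. The statement as written genuinely needs one extra iteration. Take $p=1$, $f(x)=\frac{L}{2}x^2$ on $\R$ with $L_1^0=\mu=L$, $L_1^1=0$, $\delta=0$, an exact LMO and $\Delta^0=2\epsilon$: the formula gives $K=1$, while $\Delta_0=2\epsilon>\epsilon$.

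A smaller point: your ``convex combination'' step, and the paper's induction, tacitly need $q\le 1$. This does hold. Apply the descent inequality to an exact LMO step of length $g_i^0/\ell_i^0$ from $X^0$, and combine $f\ge f^\star$ with P\L{} to get $\mu\le L^0_{\max}+L^1_{\max}\sqrt{2\mu\Delta^0}$. It still deserves a line in the proof.
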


	\section{Inexact Gluon with weight decay for the star-convex case}

	In this section, we consider the star-convex case where the objective function $f(\cdot)$ in problem (\ref{p:p}) is star-convex (defined below). 
	
	\begin{assumption}[Star-convexity]\label{as:starconvex}
		We assume that the objective function $f(\cdot)$ in problem (\ref{p:p}) is star-convex, that is, the following inequality holds:
		\begin{equation}
			f(\beta X^* + (1 - \beta)X) \leq \beta f(X^*) + (1-\beta)f(X), \nonumber 
		\end{equation}
		for all $X \in {\cal S}$ and $\beta \in [0, 1]$, where $X^* \in {\cal S}$ is a solution. 
	\end{assumption}
	
	As pointed out by \citet{kovalev2025understanding}, it is possible to obtain improved convergence guarantees in the star-convex case as long as the iterations are bounded, and weight decay is an effective technique for ensuring such boundedness. We therefore integrate weight decay into inexact Gluon, resulting in Algorithm \ref{alg:gluon-sc}.

	\begin{algorithm}[t]
		\caption{Inexact Gluon with weight decay}
		\label{alg:gluon-sc}
		\begin{algorithmic}[1]
			\State \textbf{Input:} $X^0, M^{-1} \in \mathcal{S}$
			\State \textbf{parameters:} $\eta > 0$, $\alpha \in (0,1)$, $K \in \{1,2,\ldots\}$, {weight decay $\beta \in (0,1)$}
			\For{$k = 0$ to $K-1$}
			\State Sample $\xi^k \sim \mathcal{D}$
			\For{$i = 1, 2, \dots, p$}
			\State Compute $M_i^{k}$ for layer $i$ with two options:
			\State   \textbf{Option 1:} $M_i^k = \nabla_i f(X^k)$
			\State   \textbf{Option 2:} $M_i^k = (1 - \alpha) M_i^{k-1} + \alpha \nabla_i f_{\xi^k}(X^k)$
			\State Choose adaptive stepsize/radius $t_i \eta > 0$  for layer $i$
			\State Compute inexact LMO with $M_i^k$ for layer $i$: $${\hat D_i}^k \approx \argmin_{\|D_i\|_{(i)}\leq 1} \langle M_i^k, D_i \rangle $$
			\State Update parameters for layer $i$: $X_i^{k+1} = (1-\beta) X_i^k +  t_i\eta {\hat D_i}^k$
			\EndFor
			\State Update full parameter vector $X^{k+1} = [X_1^{k+1}, \dots, X_p^{k+1}]$
			\EndFor
		\end{algorithmic}
	\end{algorithm}

	\subsection{Deterministic case}
	
	First, we consider the deterministic case, i.e., the option 1 that $M^k \equiv \nabla f(X^k)$ for all $k$ in Algorithm \ref{alg:gluon-sc}. We can obtain the following convergence results. 
	
	\begin{theorem}\label{th:gluon-inexact-sc-deter}
		Let Assumptions \ref{as:L0L1smooth} and \ref{as:starconvex} hold. Let Assumption \ref{as:inexactLMO-layer-deter} or \ref{as:inexactLMO-layer-2-deter} hold with $\delta_{k, i} = \delta_i <1$. Define the constant $\phi_i=2$ if Assumption \ref{as:inexactLMO-layer-deter} holds, and $\phi_i = 1-\delta_i$ if Assumption \ref{as:inexactLMO-layer-2-deter} holds for all $i$. Assume 
		$$
		\beta \max_{i\in \{1, ..., p\}} \left\{  \frac{\|X_i^0\|_{(i)}}{(1+\delta_i) t_i},  \frac{\|X_i^*\|_{(i)}}{(1-s)(1-\delta_i)t_i}  \right\} \leq \eta, 
		$$
		with $s \in [0, 1)$, and $ s(1-\delta_i) \geq 2 (1 + (1+\delta_i)^2)L_i^1 t_i \eta$ for all $i$. 
		Let $X^0, ..., X^{K-1}$ be the iterates of Algorithm \ref{alg:gluon-sc} with Option 1. Then we have 
		$$
		f(X^K) - f(X^*) \leq (1-\beta)^K (f(X^0) - f(X^*)) + \frac{1}{\beta}  \sum_{i=1}^p  2 (1 + (1+\delta_i)^2) L_i^0 t_i^2 \eta^2 + \sum_{i=1}^p \frac{\phi_i t_i \eta \epsilon_{1, i}}{\beta}. 
		$$
	\end{theorem}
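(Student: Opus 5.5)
We prove the claim in three steps: an invariant keeping the iterates inside a layer-wise ball, a one-step descent inequality, and an unrolled recursion. The approach follows \citet{kovalev2025understanding}.

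\textbf{Step 1: boundedness.} Set $R_i \eqdef (1+\delta_i)t_i\eta/\beta$. By induction we show $\|X_i^k\|_{(i)} \leq R_i$ for all $k$. The base case $k=0$ holds because $\beta\|X_i^0\|_{(i)} \leq (1+\delta_i)t_i\eta$. For the inductive step, we have $\|X_i^{k+1}\|_{(i)} \leq (1-\beta)R_i + t_i\eta(1+\delta_i) = R_i$, using condition i) of Assumption \ref{as:inexactLMO-layer-deter} (or \ref{as:inexactLMO-layer-2-deter}). As a consequence, the step length is bounded: $\|X_i^{k+1}-X_i^k\|_{(i)} = \|t_i\eta\hat D_i^k - \beta X_i^k\|_{(i)} \leq 2(1+\delta_i)t_i\eta$.

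\textbf{Step 2: one-step descent.} From Assumption \ref{as:L0L1smooth} we use the standard layer-wise upper bound
$$
f(X^{k+1}) \leq f(X^k) + \sum_i \langle \nabla_i f(X^k), X_i^{k+1}-X_i^k\rangle + \sum_i \tfrac{L_i^0 + L_i^1\|\nabla_i f(X^k)\|_{(i)\star}}{2}\|X_i^{k+1}-X_i^k\|_{(i)}^2 .
$$
Rather than bounding the quadratic term crudely, we bound $\|t_i\eta\hat D_i^k - \beta X_i^k\|^2_{(i)} \leq 2t_i^2\eta^2(1+\delta_i)^2 + 2\beta^2\|X_i^k\|^2$. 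We then use $\beta\|X_i^k\|\le (1+\delta_i)t_i\eta$ in a way that yields the constant $2(1+(1+\delta_i)^2)t_i^2\eta^2$. We may need to adjust the split to match the stated constant exactly.

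Next we split the linear term as $\langle g_i, t_i\eta\hat D_i^k\rangle - \beta\langle g_i, X_i^k\rangle$, writing $g_i=\nabla_i f(X^k)$. We add and subtract $\beta\langle g_i, X_i^*\rangle$. For $i\notin\I_D^k$ we have $t_i\eta\langle g_i,\hat D_i^k\rangle \leq -(1-\delta_i)t_i\eta\|g_i\|_{(i)\star}$. Combined with $\beta\langle g_i, X_i^*\rangle \leq \beta\|X_i^*\|\,\|g_i\|_\star \leq (1-s)(1-\delta_i)t_i\eta\|g_i\|_\star$, this gives a leftover $-s(1-\delta_i)t_i\eta\|g_i\|_{(i)\star}$.

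For $i\in\I_D^k$ we have $\|g_i\|\le\epsilon_{1,i}$. Under Assumption \ref{as:inexactLMO-layer-deter} the relevant terms are bounded by $(1+\delta_i)t_i\eta\epsilon_{1,i}$ and the leftover $(1-\delta_i)t_i\eta\epsilon_{1,i}$, together at most $2t_i\eta\epsilon_{1,i}=\phi_i t_i\eta\epsilon_{1,i}$. Under Assumption \ref{as:inexactLMO-layer-2-deter} the first term is $\le 0$, so only $(1-\delta_i)t_i\eta\epsilon_{1,i}$ remains, which is $\phi_i t_i\eta\epsilon_{1,i}$. In both cases we write the result uniformly as $-s(1-\delta_i)t_i\eta\|g_i\|_\star + \phi_it_i\eta\epsilon_{1,i}$.

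The $L_i^1\|g_i\|_\star$ part of the quadratic term is at most $(1+(1+\delta_i)^2)L_i^1t_i^2\eta^2\|g_i\|_\star$. This is absorbed by the leftover $-s(1-\delta_i)t_i\eta\|g_i\|_\star$ thanks to the hypothesis $s(1-\delta_i)\ge 2(1+(1+\delta_i)^2)L_i^1t_i\eta$.

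\textbf{Step 3: recursion.} What remains of the linear part is $\beta\langle \nabla f(X^k), X^*-X^k\rangle$. Star-convexity in differential form gives $\langle\nabla f(X^k), X^*-X^k\rangle \le f(X^*)-f(X^k)$; this follows from Assumption \ref{as:starconvex} by dividing by the convex-combination weight and letting it tend to $0$. Hence
$$
f(X^{k+1})-f^* \le (1-\beta)(f(X^k)-f^*) + \sum_i 2(1+(1+\delta_i)^2)L_i^0t_i^2\eta^2 + \sum_i\phi_it_i\eta\epsilon_{1,i}.
$$
Unrolling this recursion and bounding the geometric sum by $1/\beta$ gives the claim. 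The main obstacle is the bookkeeping in Step 2: the split of the quadratic term must produce exactly the constant $2(1+(1+\delta_i)^2)$, and both the $L^1$ term and the $\I_D^k$ cases must be absorbed cleanly.
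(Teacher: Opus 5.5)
Your proof is correct, but it takes a different route from the paper at the key step.

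\textbf{The paper's route.} The paper never uses a first-order form of star-convexity. It introduces the auxiliary point $X=\beta X^*+(1-\beta)X^k$ and bounds $\langle \nabla f(X^k), X^{k+1}-X\rangle$ via Lemma~\ref{lm:Xkbound-sc}. It then uses the lower half of the $(L^0,L^1)$ bound to replace $f(X^k)+\langle\nabla f(X^k),X-X^k\rangle$ by $f(X)$. This costs $\sum_i \frac{L_i^0+L_i^1\|\nabla_i f(X^k)\|_{(i)\star}}{2}\|X_i-X_i^k\|_{(i)}^2\le\sum_i 2(L_i^0+L_i^1\|\nabla_i f(X^k)\|_{(i)\star})t_i^2\eta^2$. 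Finally it applies Assumption~\ref{as:starconvex} verbatim to $f(X)$. That penalty is exactly the ``$1+$'' in $2(1+(1+\delta_i)^2)$.

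\textbf{Your route.} You use $\langle\nabla f(X^k),X^*-X^k\rangle\le f(X^*)-f(X^k)$, which is legitimately obtained by the limiting argument since $f$ is differentiable. This skips the penalty, so you only pay $\frac{L_i^0+L_i^1\|g_i\|_{(i)\star}}{2}\|X_i^{k+1}-X_i^k\|_{(i)}^2\le 2(1+\delta_i)^2(L_i^0+L_i^1\|g_i\|_{(i)\star})t_i^2\eta^2$.

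Hence the ``bookkeeping obstacle'' you flag is not real. You need not reproduce $2(1+(1+\delta_i)^2)$ exactly, because your constant is smaller. Your argument in fact proves the theorem under the weaker hypothesis $s(1-\delta_i)\ge 2(1+\delta_i)^2L_i^1t_i\eta$, with $2(1+\delta_i)^2L_i^0$ in the error term.

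\textbf{A minor slip.} The $L_i^1$ part of the quadratic term is $2(1+\delta_i)^2L_i^1t_i^2\eta^2\|g_i\|_{(i)\star}$. This exceeds your stated $(1+(1+\delta_i)^2)L_i^1t_i^2\eta^2\|g_i\|_{(i)\star}$ when $\delta_i>0$. It is still absorbed by $s(1-\delta_i)t_i\eta\|g_i\|_{(i)\star}$ under the stated hypothesis, so nothing breaks.

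\textbf{What the paper's route buys.} It uses star-convexity exactly as assumed, with no limiting argument. The auxiliary point and the bounds $\|X_i-X_i^k\|_{(i)},\|X_i-X_i^{k+1}\|_{(i)}\le 2t_i\eta$ are also reused in the stochastic analysis of Theorem~\ref{th:gluon-inexact-sc}. There they control $\langle M^k-\nabla f(X^k),X-X^{k+1}\rangle$.
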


	\begin{corollary}\label{co:inexact-gluon-sc-deter}
		Under the premise of Theorem \ref{th:gluon-inexact-sc-deter}, denote $$D_s =  \max_{i\in \{1, ..., p\}} \left\{  \frac{\|X_i^0\|_{(i)}}{(1+\delta_i) t_i},  \frac{\|X_i^*\|_{(i)}}{(1-s)(1-\delta_i)t_i}  \right\},$$ and set $\beta = \frac{\ln K}{K}$, $\eta = \beta D_s$, then we have the following results. 
		1. For the case where $L_i^1 = 0$ for all $i$, by choosing $s=0$, we can obtain 
		$$
		f(X^K) - f(X^*) \leq \frac{f(X^0) - f(X^*)}{K} + \frac{\sum_{i=1}^p  2 (1 + (1+\delta_i)^2) L_i^0 t_i^2 D_s^2 \ln K}{K}  +  \sum_{i=1}^p \phi_i t_i D_s \epsilon_{1, i}. 
		$$
		2. For the case where $L_i^1 \neq 0$ for all $i$, we choose $s = \frac{1}{20}$, and assume $\frac{\ln K}{K} \leq \min_{i} \left\{  \frac{1-\delta_i}{40(1 + (1+\delta_i)^2)L_i^1 D_s t_i}  \right\}$. Then we have 
		$$
		f(X^K) - f(X^*) \leq \frac{f(X^0) - f(X^*)}{K} + \frac{\sum_{i=1}^p  2 (1 + (1+\delta_i)^2) L_i^0 t_i^2 D_s^2 \ln K}{K}  +  \sum_{i=1}^p \phi_i t_i D_s \epsilon_{1, i}. 
		$$
		
	\end{corollary}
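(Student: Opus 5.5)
The plan is to specialize Theorem \ref{th:gluon-inexact-sc-deter} to the choice $\beta = \frac{\ln K}{K}$, $\eta = \beta D_s$. First we check that every premise of the theorem holds. Then we simplify each of the three terms on the right-hand side of its bound. Throughout we take $K \geq 2$, so that $\beta = \frac{\ln K}{K} \in (0, 1/e] \subset (0,1)$ is an admissible weight decay.

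\textbf{Verifying the premises.} The first premise is $\beta \max_i \{\cdots\} \leq \eta$. By definition of $D_s$ (computed with the same $s$ as in the theorem), this reads $\beta D_s \leq \eta$, which holds with equality. The second premise is $s(1-\delta_i) \geq 2(1+(1+\delta_i)^2) L_i^1 t_i \eta$, and we treat the two cases separately.
\begin{itemize}
\item[1.] When $L_i^1 = 0$ for all $i$, the right-hand side vanishes, so $s=0$ works.
\item[2.] When $L_i^1 \neq 0$, we take $s = \frac{1}{20}$ and fix $D_s$ with $1-s = \frac{19}{20}$. The assumption $\frac{\ln K}{K} \leq \frac{1-\delta_i}{40(1+(1+\delta_i)^2)L_i^1 D_s t_i}$ gives $\eta = \frac{\ln K}{K} D_s \leq \frac{1-\delta_i}{40(1+(1+\delta_i)^2) L_i^1 t_i}$. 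Hence $2(1+(1+\delta_i)^2)L_i^1 t_i \eta \leq \frac{1-\delta_i}{20} = s(1-\delta_i)$, as required.
\end{itemize}
The only subtlety is that $D_s$ depends on $s$. This causes no circularity, because $s$ is fixed first and $D_s$ is then a constant in the stepsize condition.

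\textbf{Simplifying the bound.} With the premises in hand, Theorem \ref{th:gluon-inexact-sc-deter} applies, and we bound its three terms in turn.
\begin{itemize}
\item[(a)] For the contraction factor, $(1-\beta)^K \leq e^{-\beta K} = e^{-\ln K} = \frac{1}{K}$, which gives the term $\frac{f(X^0)-f(X^*)}{K}$.
\item[(b)] For the smoothness term, substituting $\eta = \beta D_s$ gives $\frac{1}{\beta} \cdot 2(1+(1+\delta_i)^2)L_i^0 t_i^2 \eta^2 = 2(1+(1+\delta_i)^2)L_i^0 t_i^2 D_s^2 \beta$. This equals the stated $\frac{\ln K}{K}$ term.
\item[(c)] For the inexactness term, $\frac{\phi_i t_i \eta \epsilon_{1,i}}{\beta} = \phi_i t_i D_s \epsilon_{1,i}$.
\end{itemize}
Summing over $i$ yields the claimed bound. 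The two cases differ only in the choice of $s$, and hence in the value of $D_s$.

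There is no real obstacle here. The only step needing care is verifying the $L^1$ stepsize condition in case 2, where the factor $40 = 2 \cdot 20$ is exactly what makes $s = \frac{1}{20}$ work.
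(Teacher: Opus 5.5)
Your proposal is correct and follows the paper's argument. You check that $\eta=\beta D_s$ makes the first premise of Theorem~\ref{th:gluon-inexact-sc-deter} hold with equality and that the assumed bound on $\frac{\ln K}{K}$ (with $40=2\cdot 20$) gives $s(1-\delta_i)\ge 2(1+(1+\delta_i)^2)L_i^1t_i\eta$, then apply $(1-\beta)^K\le e^{-\ln K}=1/K$ and substitute $\eta=\beta D_s$ into the other two terms, with the implicit step $f(X^0)-f(X^*)\ge 0$ holding because $X^*$ is a solution.
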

	
	Compared to the $\mathcal{O}(1/\sqrt{K})$ convergence rate in Theorem~\ref{th:gluon-deter} for the general non‑convex case, the star‑convex case achieves a faster $\mathcal{O}(\ln K/K)$ rate in Corollary~\ref{co:inexact-gluon-sc-deter}.

	\subsection{Stochastic case}
	
	In this subsection, we consider the inexact Gluon with weight decay in the stochastic case, i.e., the option 2 in Algorithm \ref{alg:gluon-sc}. We have the following convergence results.

	\begin{theorem}\label{th:gluon-inexact-sc}
		Let Assumptions \ref{as:L0L1smooth}, \ref{as:boundedvariance}, \ref{as:rho}, and \ref{as:starconvex} hold. Let Assumption \ref{as:inexactLMO-layer} or \ref{as:inexactLMO-layer-2} hold with $\delta_{k, i} = \delta_i <1$. Define the constant $\phi_i=2$ if Assumption \ref{as:inexactLMO-layer} holds, and $\phi_i = 1-\delta_i$ if Assumption \ref{as:inexactLMO-layer-2} holds for all $i$. Assume 
		$$
		\beta \max_{i\in \{1, ..., p\}} \left\{  \frac{\|X_i^0\|_{(i)}}{(1+\delta_i) t_i},  \frac{\|X_i^*\|_{(i)}}{(1-s)(1-\delta_i)t_i}  \right\} \leq \eta, 
		$$
		with $s \in [0, 1)$, $\alpha > \beta$, and $s(1-\delta_i) \geq 2t_i \eta L_i^1 (1 + (1+\delta_i)^2 + \frac{5}{\alpha - \beta})$ for all $i$. 
		Let $X^0, ..., X^{K-1}$ be the iterates of Algorithm \ref{alg:gluon-sc} with Option 2, and $M^{-1} = \nabla f_{\xi^0}(X^0)$. Then we have 
		\begin{eqnarray*}
			\E [f(X^K) - f(X^*)] 
			&\leq& (1-\beta)^K (f(X^0) - f(X^*)) + \frac{1}{\beta}  \sum_{i=1}^p  2(1 + (1+\delta_i)^2)L_i^0 t_i^2 \eta^2  \\ 
			&& +  \sum_{i=1}^p \frac{\phi_i t_i \eta \epsilon_{1, i}}{\beta}   +   \sum_{i=1}^p (2 + s(1-\delta_i))t_i \eta \left(  \frac{\rho\sigma}{\alpha}  +  \frac{\sqrt{\alpha} \rho \sigma}{\beta}  +   \frac{2(1+\delta_i)L_i^0 t_i \eta}{\alpha \beta}   \right). 
		\end{eqnarray*}
	\end{theorem}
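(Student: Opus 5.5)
We prove Theorem \ref{th:gluon-inexact-sc} by combining a one-step descent inequality for the weight-decayed update with a standard recursive bound on the momentum error $\|M_i^k-\nabla_i f(X^k)\|_{(i)\star}$. The deterministic proof of Theorem \ref{th:gluon-inexact-sc-deter} is the template. The only new ingredient is that the inexact LMO is applied to $M_i^k$ rather than to $\nabla_i f(X^k)$, so an error term $\|M_i^k-\nabla_i f(X^k)\|_{(i)\star}$ enters each step.

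\textbf{Step 1 (boundedness).} We first show by induction that $\|X_i^k\|_{(i)}\le (1+\delta_i)t_i\eta/\beta$ for all $k$. The update $X_i^{k+1}=(1-\beta)X_i^k+\beta\cdot\frac{t_i\eta}{\beta}\hat D_i^k$ is a convex combination, and $\|\hat D_i^k\|_{(i)}\le 1+\delta_i$. The base case holds by the assumed bound $\beta\|X_i^0\|_{(i)}\le (1+\delta_i)t_i\eta$. Consequently $\|X_i^{k+1}-X_i^k\|_{(i)}\le 2(1+\delta_i)t_i\eta$, which bounds the step length.

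\textbf{Step 2 (descent).} Layer-wise $(L^0,L^1)$-smoothness gives
\[
f(X^{k+1})\le f(X^k)+\sum_i\Big[\langle\nabla_i f(X^k),X_i^{k+1}-X_i^k\rangle+\tfrac{L_i^0+L_i^1\|\nabla_i f(X^k)\|_{(i)\star}}{2}\|X_i^{k+1}-X_i^k\|_{(i)}^2\Big].
\]
Write the inner product as $-\beta\langle\nabla_i f,X_i^k\rangle+t_i\eta\langle M_i^k,\hat D_i^k\rangle+t_i\eta\langle\nabla_i f-M_i^k,\hat D_i^k\rangle$.

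For the middle term we use Assumption \ref{as:inexactLMO-layer} or \ref{as:inexactLMO-layer-2}. For $i\notin\I^k$ it is at most $-(1-\delta_i)t_i\eta\|M_i^k\|_{(i)\star}$. For $i\in\I^k$, we bound $-(1-\delta_i)\|M_i^k\|\ge -\epsilon_{1,i}$ and $\langle M_i^k,\hat D_i^k\rangle\le (1+\delta_i)\epsilon_{1,i}$ (or $\le 0$). This yields the slack $\phi_i t_i\eta\epsilon_{1,i}$ uniformly.

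Next, $-(1-\delta_i)\|M_i^k\|_{(i)\star}\le -(1-\delta_i)\|\nabla_i f\|_{(i)\star}+(1-\delta_i)\|M_i^k-\nabla_i f\|_{(i)\star}$. By the dual-norm representation, $-(1-s)(1-\delta_i)t_i\eta\|\nabla_i f\|\le (1-s)(1-\delta_i)t_i\eta\langle\nabla_i f,X_i^*\rangle/\|X_i^*\|\cdot$, and using the assumed bound $\beta\|X_i^*\|\le(1-s)(1-\delta_i)t_i\eta$ this gives $\le\beta\langle\nabla_i f(X^k),X_i^*\rangle$. Combining with $-\beta\langle\nabla_i f,X_i^k\rangle$ and star-convexity (differential form $\langle\nabla f(X),X^*-X\rangle\le f^*-f(X)$) produces the contraction $-\beta(f(X^k)-f^*)$.

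The remaining $s(1-\delta_i)t_i\eta\|\nabla_i f\|$ is kept negative and absorbs the $L_i^1$ quadratic term $2(1+(1+\delta_i)^2)L_i^1t_i^2\eta^2\|\nabla_i f\|$. It also absorbs the $L^1$ part of the momentum error. All error inner products are collected with coefficient $(2+s(1-\delta_i))t_i\eta\|M_i^k-\nabla_i f\|$: the factor $1+\delta_i$ comes from $\hat D$, and $(1-\delta_i)(1-s)$ plus $s(1-\delta_i)$ come from above, giving $1+\delta_i+1-\delta_i+\dots$.

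\textbf{Step 3 (momentum error, the main obstacle).} As in the proof of Theorem \ref{th:Inexact-gluon}, we unroll $M_i^k-\nabla_i f(X^k)=(1-\alpha)(M_i^{k-1}-\nabla_i f(X^{k-1}))+(1-\alpha)(\nabla_i f(X^{k-1})-\nabla_i f(X^k))+\alpha(\text{noise})$. Using Assumption \ref{as:rho}, Assumption \ref{as:boundedvariance}, and martingale summation, the noise part contributes $\rho\sigma(1-\alpha)^{k}+\sqrt{\alpha}\rho\sigma$. The gradient drift is bounded by $(L_i^0+L_i^1\|\nabla_i f(X^{j})\|)\,2(1+\delta_i)t_i\eta$.

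The difficulty is summing these errors against the weights $(1-\beta)^{K-1-k}$. The geometric kernels $(1-\alpha)^{k-j}$ convolved with $(1-\beta)^{K-1-k}$ produce the factors $1/\alpha$, $1/\beta$, and $1/(\alpha-\beta)$ (the latter using $\alpha>\beta$). The $L^1$ drift term becomes $\sum_j(1-\beta)^{K-1-j}\frac{5}{\alpha-\beta}L_i^1t_i^2\eta^2\|\nabla_i f(X^j)\|$, and this is cancelled by the retained negative term via the condition $s(1-\delta_i)\ge 2t_i\eta L_i^1(1+(1+\delta_i)^2+\frac{5}{\alpha-\beta})$.

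\textbf{Step 4.} Taking expectations and unrolling the recursion $r_{k+1}\le(1-\beta)r_k+\dots$, then summing $\sum_k(1-\beta)^{K-1-k}\le 1/\beta$, yields the stated bound term by term.
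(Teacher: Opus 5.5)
Your proof is correct, and its skeleton is the paper's: the invariant ball $\beta\|X_i^k\|_{(i)}\le(1+\delta_i)t_i\eta$, the $\phi_i t_i\eta\epsilon_{1,i}$ slack from the inexact LMO, the unrolled momentum error (noise part $\rho\sigma(1-\alpha)^k+\sqrt{\alpha}\rho\sigma$ plus drift), and the kernel convolution giving $1/(\alpha-\beta)$. Where you differ is in how you produce the contraction $-\beta(f(X^k)-f(X^*))$.

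\textbf{The paper's route.} It introduces the auxiliary point $X=\beta X^*+(1-\beta)X^k$. In Lemma~\ref{lm:Xkbound-sc} it shows $\langle M^k,X^{k+1}-X\rangle\le-\sum_i s t_i\eta(1-\delta_i)\|M_i^k\|_{(i)\star}+\sum_i\phi_i t_i\eta\epsilon_{1,i}$, applying the dual-norm bound to $M_i^k$. It then rewrites the step as $f(X^k)+\langle\nabla f(X^k),X-X^k\rangle+\langle M^k-\nabla f(X^k),X-X^{k+1}\rangle+\dots$. Next it uses the two-sided $(L^0,L^1)$ bound a second time to pass to $f(X)$, at the cost of a quadratic in $\|X_i-X_i^k\|_{(i)}\le 2t_i\eta$. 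Only then does it apply star-convexity in its function-value form.

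\textbf{Your route.} You apply the dual-norm bound to $\nabla_i f(X^k)$ instead. You then use the first-order consequence $\langle\nabla f(X^k),X^*-X^k\rangle\le f(X^*)-f(X^k)$, which follows from Assumption~\ref{as:starconvex} by letting the mixing weight tend to $0$; this is valid because $f$ is differentiable. This avoids the second smoothness application and the bounds on $X-X^k$ and $X-X^{k+1}$. It also gives slightly sharper constants: $2(1+\delta_i)^2$ instead of $2(1+(1+\delta_i)^2)$ in front of $(L_i^0+L_i^1\|\nabla_i f(X^k)\|_{(i)\star})t_i^2\eta^2$, and error weight $2$ instead of $2+s(1-\delta_i)$. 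The stated bound therefore follows a fortiori. In return, the paper's route uses only the zeroth-order definition of star-convexity and packages the LMO bookkeeping in one lemma shared with Option~1.

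\textbf{Two slips to fix when writing it up.}

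First, the intermediate quantity $(1-s)(1-\delta_i)t_i\eta\langle\nabla_i f,X_i^*\rangle/\|X_i^*\|_{(i)}$ is not $\le\beta\langle\nabla_i f,X_i^*\rangle$ when that inner product is positive, and it is undefined if $X_i^*=0$. Write directly $\beta\langle\nabla_i f(X^k),X_i^*\rangle\ge-\beta\|X_i^*\|_{(i)}\|\nabla_i f(X^k)\|_{(i)\star}\ge-(1-s)(1-\delta_i)t_i\eta\|\nabla_i f(X^k)\|_{(i)\star}$.

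Second, the $L^1$ drift coefficient is misstated. Per index $\tau$, with weight $(1-\beta)^{K-1-\tau}$, it contributes at most $2(2+s(1-\delta_i))(1+\delta_i)L_i^1t_i^2\eta^2/(\alpha-\beta)\le 10L_i^1t_i^2\eta^2/(\alpha-\beta)$. With your sharper weight $2$ it is only $4(1+\delta_i)L_i^1t_i^2\eta^2/(\alpha-\beta)$. Either way it is not $5L_i^1t_i^2\eta^2/(\alpha-\beta)$. It is still cancelled, because the hypothesis $s(1-\delta_i)\ge 2t_i\eta L_i^1(1+(1+\delta_i)^2+\frac{5}{\alpha-\beta})$ supplies exactly $10L_i^1t_i^2\eta^2/(\alpha-\beta)$ beyond the quadratic term.
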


	In the following corollary, without loss of generality, we assume $D_s \geq 1$ and $\frac{D_s \ln K}{K}  \leq 1$ which generally holds for large $K$ such that $\beta < \alpha \leq 1$. We omit the proof since it is similar to that of Corollary \ref{co:inexact-gluon-sc-deter}.
	
	\begin{corollary}\label{co:inexact-gluon-sc}
		Under the premise of Theorem \ref{th:gluon-inexact-sc}, denote $$D_s =  \max_{i\in \{1, ..., p\}} \left\{  \frac{\|X_i^0\|_{(i)}}{(1+\delta_i) t_i},  \frac{\|X_i^*\|_{(i)}}{(1-s)(1-\delta_i)t_i}  \right\},$$ and set $\beta = \frac{\ln K}{K}$, $\eta = \beta D_s$, and $\alpha = \left(  \frac{D_s \ln K}{K}  \right)^{2/3}$, then we have the following results. 
		
		\noindent 1. For the case where $L_i^1 = 0$ for all $i$, by choosing $s=0$, we can obtain 
		\begin{eqnarray*}
			\E [f(X^K) - f(X^*)] &\leq& \frac{f(X^0) - f(X^*)}{K} + \frac{\sum_{i=1}^p  2 (1 + (1+\delta_i)^2) L_i^0 t_i^2 D_s^2 \ln K}{K}  +  \sum_{i=1}^p \phi_i t_i D_s \epsilon_{1, i} \\ 
			&& + \frac{\sum_{i=1}^p 2t_i \rho \sigma (D_s^{1/3} + D_s^{4/3}) (\ln K)^{1/3}}{K^{1/3}} + \frac{\sum_{i=1}^p 4(1+\delta_i) L_i^0 t_i^2 D_s^{4/3}(\ln K)^{1/3}}{K^{1/3}}. 
		\end{eqnarray*}
		
		\noindent 2. For the case where $L_i^1 \neq 0$ for all $i$, we choose $s = \frac{1}{3}$, and assume 
		$$
		\frac{\ln K}{K} + \frac{\left(  \frac{\ln K}{K}  \right)^{1/3}}{1 - \left(  \frac{\ln K}{K}  \right)^{1/3}} \leq \min_{i} \left\{  \frac{(1-\delta_i)}{30D_st_iL_i^1}  \right\}. 
		$$
		
		Then we have 
		\begin{eqnarray*}
			\E [f(X^K) - f(X^*)] &\leq& \frac{f(X^0) - f(X^*)}{K} + \frac{\sum_{i=1}^p  2 (1 + (1+\delta_i)^2) L_i^0 t_i^2 D_s^2 \ln K}{K}  +  \sum_{i=1}^p \phi_i t_i D_s \epsilon_{1, i} \\ 
			&& + \frac{\sum_{i=1}^p (\frac{7}{3} - \frac{\delta_i}{3})t_i \rho \sigma (D_s^{1/3} + D_s^{4/3}) (\ln K)^{1/3}}{K^{1/3}} \\ 
			&&  + \frac{\sum_{i=1}^p  (\frac{14}{3} - \frac{2\delta_i}{3}) (1+\delta_i) L_i^0 t_i^2 D_s^{4/3}(\ln K)^{1/3}}{K^{1/3}}. 
		\end{eqnarray*}
		
	\end{corollary}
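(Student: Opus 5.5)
Corollary \ref{co:inexact-gluon-sc} follows by substituting the prescribed parameters into the bound of Theorem \ref{th:gluon-inexact-sc} and simplifying term by term, exactly as in Corollary \ref{co:inexact-gluon-sc-deter}. First, $\eta = \beta D_s$ makes the boundedness hypothesis of Theorem \ref{th:gluon-inexact-sc} hold with equality. The standing assumption $D_s \ln K/K \le 1$ with $D_s\ge 1$ gives $\beta \le D_s\ln K/K \le (D_s\ln K/K)^{2/3}=\alpha \le 1$. Strictness $\alpha>\beta$ holds whenever $D_s\ln K/K<1$ or $D_s>1$; in the boundary case we take $K$ slightly larger.

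The deterministic-type terms are handled as follows:
\begin{itemize}
\item[(a)] $(1-\beta)^K\Delta \le e^{-\beta K}\Delta = \Delta/K$.
\item[(b)] $\frac{1}{\beta}\sum_i 2(1+(1+\delta_i)^2)L_i^0t_i^2\eta^2 = \sum_i 2(1+(1+\delta_i)^2)L_i^0t_i^2 D_s^2\beta = \sum_i 2(1+(1+\delta_i)^2)L_i^0t_i^2D_s^2\ln K/K$.
\item[(c)] $\frac{\phi_i t_i\eta\epsilon_{1,i}}{\beta}=\phi_it_iD_s\epsilon_{1,i}$.
\end{itemize}

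For the variance/momentum term, write $u=D_s\ln K/K$, so $\alpha=u^{2/3}$ and $\eta=u$. Then:
\begin{itemize}
\item $\eta\rho\sigma/\alpha = \rho\sigma u^{1/3} = \rho\sigma D_s^{1/3}(\ln K)^{1/3}/K^{1/3}$.
\item $\eta\sqrt{\alpha}\rho\sigma/\beta = D_s u^{1/3}\rho\sigma = \rho\sigma D_s^{4/3}(\ln K/K)^{1/3}$.
\item $\frac{2(1+\delta_i)L_i^0t_i\eta^2}{\alpha\beta} = 2(1+\delta_i)L_i^0t_iD_s u^{1/3} = 2(1+\delta_i)L_i^0t_iD_s^{4/3}(\ln K/K)^{1/3}$.
\end{itemize}
Multiplying by the prefactor $(2+s(1-\delta_i))t_i$ gives the stated terms. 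With $s=0$ the prefactor is $2$, which yields part 1. With $s=1/3$ it is $\frac{7}{3}-\frac{\delta_i}{3}$, which yields part 2; the factor $2$ inside the last term produces $\frac{14}{3}-\frac{2\delta_i}{3}$. In part 1, $L_i^1=0$, so the smoothness-type hypothesis $0\ge 0$ holds trivially with $s=0$.

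The main obstacle is checking the hypothesis $s(1-\delta_i)\ge 2t_i\eta L_i^1\bigl(1+(1+\delta_i)^2+\tfrac{5}{\alpha-\beta}\bigr)$ in part 2 with $s=1/3$. We need $\frac{1-\delta_i}{3}\ge 2t_iL_i^1D_s\bigl(\beta(1+(1+\delta_i)^2)+\tfrac{5\beta}{\alpha-\beta}\bigr)$.
\begin{itemize}
\item Since $\delta_i<1$, we have $1+(1+\delta_i)^2\le 5$.
\item Because $D_s\ge1$, we get $\alpha\ge (\ln K/K)^{2/3}$, so $\frac{\beta}{\alpha-\beta}\le \frac{(\ln K/K)}{(\ln K/K)^{2/3}-\ln K/K} = \frac{(\ln K/K)^{1/3}}{1-(\ln K/K)^{1/3}}$. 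This uses monotonicity of $\beta/(\alpha-\beta)$ as a decreasing function of $\alpha$.
\end{itemize}
Hence the right side is at most $10t_iL_i^1D_s\bigl(\frac{\ln K}{K}+\frac{(\ln K/K)^{1/3}}{1-(\ln K/K)^{1/3}}\bigr)$, and the assumed bound (with $30$ in the denominator) gives exactly $\le (1-\delta_i)/3$. This matches the displayed condition, which confirms the plan.
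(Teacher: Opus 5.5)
Your proposal is correct and follows the route the paper indicates (it omits the proof as analogous to Corollary~\ref{co:inexact-gluon-sc-deter}): bound $(1-\beta)^K\le 1/K$, substitute $\eta=\beta D_s$ and $\alpha=(D_s\ln K/K)^{2/3}$ term by term, and verify the step-size hypothesis of Theorem~\ref{th:gluon-inexact-sc} via $1+(1+\delta_i)^2\le 5$ and $\beta/(\alpha-\beta)\le (\ln K/K)^{1/3}/(1-(\ln K/K)^{1/3})$. One small remark: your ``boundary case'' for $\alpha>\beta$ never occurs, because $\ln K/K<1$ gives $\alpha=u^{2/3}\ge u:=D_s\ln K/K\ge\beta$ with at least one of the two inequalities strict, so no enlargement of $K$ is needed.
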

	
	Compared to the $\mathcal{O}(\frac{1}{K^{1/4}})$ convergence rate in Corollary~\ref{co:inexact-gluon} for the general non‑convex case, the star‑convex case achieves a faster $\mathcal{O}(\frac{(\ln K)^{1/3}}{K^{1/3}})$ rate in Corollary~\ref{co:inexact-gluon-sc}.

	\bibliographystyle{plainnat} 
	\bibliography{ref_inexactLMO.bib}

	\clearpage
	\appendix

	\part*{Appendix}

	\tableofcontents
	
	\clearpage

	\section{Two examples}\label{sec:twoexa}

	In this section, we give two examples to show that when a matrix is demoted to lower precision, the smallest positive singular value can be much smaller than the corresponding machine precision. The matrix $A \in \R^{m\times n}$ is constructed in double precision, i.e., float64. Then $A$ is demoted to single precision, half precision, and bfloat16, and represented as ${\rm single}(A)$, ${\rm half}(A)$, and ${\rm bfloat16(A)}$, respectively. To calculate the singular values of the demoted versions, we transform them back to double precision (represented as ${\rm double}({\rm single}(A))$, ${\rm double}({\rm half}(A))$, and ${\rm double}({\rm bfloat16}(A))$, respectively), and use the function {\em numpy.linalg.svd} in Numpy library. We set the singular values that are smaller than $10^{-12}$ to $0$.

	\paragraph{First example} Let $A(col\_per, t) \eqdef \frac{{\tilde A}(col\_per, t)}{\|{\tilde A}(col\_per, t)\|_F + \epsilon_{ns}} \in \R^{m\times n}$, where $\epsilon_{ns} = 10^{-7}$ and ${\tilde A}(col\_per, t)$ is defined as follows. ${\tilde A}(col\_per, t)$ has two types of columns: type-$1$ column and type-$2$ column. Each element of type-$1$ column is randomly sampled from the interval $[-1, 1]$ uniformly and independently. $t$ elements of type-$2$ column is randomly sampled from the interval $[-1, 1]$ uniformly and independently, and the $t$ indices are chosen randomly with equal probability. The rest $m-t$ elements of type-$2$ column are randomly sampled from the interval $[-10^{-7}, 10^7]$ uniformly and independently. $col\_per\%$ columns are type-$1$ column, and the indices are chosen randomly with equal probability. The rest columns are type-$2$ column. We choose $col\_per \in \{ 10, 20, 40, 80, 90, 99 \}$ and $t \in \{0, 1, 2\}$ in the experiments. Hence most elements of type-2 column are close to $0$. We list the largest singular value of ${\rm double}(A)$ and smallest positive singular values of ${\rm double}({\rm single}(A))$, ${\rm double}({\rm half}(A))$, and ${\rm double}({\rm bfloat16}(A))$ in Tables \ref{tb:ex1-1}, \ref{tb:ex1-2}, and \ref{tb:ex1-3}. Note that the machine precisions of single precision, half precision, and bfloat16 are $2^{-23} \approx 1.19\times 10^{-7}$, $2^{-10} \approx 9.77\times 10^{-4}$, and $2^{-7} \approx 7.81 \times 10^{-3}$, respectively. In many cases, the smallest positive singular value of the demoted version can be much smaller than the corresponding machine precision.

	\paragraph{Second example} Let $A(t, \Delta_t) \eqdef \frac{{\tilde A}(t, \Delta_t)}{\|{\tilde A}(t, \Delta_t)\|_F + \epsilon_{ns}} \in \R^{m\times n}$, where $\epsilon_{ns} = 10^{-7}$ and we construct ${\tilde A}(t, \Delta_t)$ such that the first two columns are very close. The first $m-1$ elements of the first column of ${\tilde A}(t, \Delta_t)$ are randomly sampled from the interval $[-1, 1]$ uniformly and independently, and the last element is set to $t$. The values of the second column is the same as that of the first column except that the last element is set to $t+\Delta_t$ instead. The rest elements of ${\tilde A}(t, \Delta_t)$ are randomly sampled from the interval $[-1, 1]$ uniformly and independently. We choose $t \in \{  1, 10^{-1}, 10^{-2}, \dots, 10^{-8}  \}$, and for each $t$, $\Delta_t \in \{ t, t/10, t/100 \}$. Since the first two columns are close, the condition number of $A(t, \Delta_t)$ could be large. We list the largest singular value of ${\rm double}(A)$ and smallest positive singular values of ${\rm double}({\rm single}(A))$, ${\rm double}({\rm half}(A))$, and ${\rm double}({\rm bfloat16}(A))$ in Table \ref{tb:ex2} where $m=2000$ and $n=100$. The smallest positive singular value of the demoted version can also be much smaller than the corresponding machine precision in many cases.

	\begin{table}[htbp]
		\centering
		\footnotesize 
		\setlength{\tabcolsep}{3pt} 
		\renewcommand{\arraystretch}{1.2} 
		\caption{Singular values of $A(col\_per, t)$ under different precisions for $t=0$. }
		\label{tb:ex1-1}
		\begin{tabular}{c c c c c c}
			\toprule
			\makecell{($m$, $n$, $col\_per$, $t$)} & 
			\makecell{Largest singular \\ value of \\ ${\rm double}(A)$} & 
			\makecell{Smallest positive \\ singular value of \\ ${\rm double}(A)$} & 
			\makecell{Smallest positive \\ singular value of \\ ${\rm double}({\rm single}(A))$} & 
			\makecell{Smallest positive \\ singular value of \\ ${\rm double}({\rm half}(A))$} & 
			\makecell{Smallest positive \\ singular value of \\ ${\rm double}({\rm bfloat16}(A))$} \\ 
			\midrule
			(2000, 100, 10, 0) & $3.37 \times 10^{-1}$ & $2.51 \times 10^{-8}$ & $2.51 \times 10^{-8}$ & $2.95 \times 10^{-1}$ & $2.51 \times 10^{-8}$ \\
			(2000, 100, 20, 0) & $2.46 \times 10^{-1}$ & $1.79 \times 10^{-8}$ & $1.79 \times 10^{-8}$ & $2.04 \times 10^{-1}$ & $1.79 \times 10^{-8}$ \\
			(2000, 100, 40, 0) & $1.80 \times 10^{-1}$ & $1.31 \times 10^{-8}$ & $1.31 \times 10^{-8}$ & $1.38 \times 10^{-1}$ & $1.31 \times 10^{-8}$ \\
			(2000, 100, 80, 0) & $1.33 \times 10^{-1}$ & $1.01 \times 10^{-8}$ & $1.01 \times 10^{-8}$ & $9.00 \times 10^{-2}$ & $1.01 \times 10^{-8}$ \\
			(2000, 100, 90, 0) & $1.27 \times 10^{-1}$ & $9.65 \times 10^{-9}$ & $9.65 \times 10^{-9}$ & $8.37 \times 10^{-2}$ & $9.65 \times 10^{-9}$ \\
			(2000, 100, 99, 0) & $1.22 \times 10^{-1}$ & $9.73 \times 10^{-9}$ & $9.73 \times 10^{-9}$ & $7.79 \times 10^{-2}$ & $9.73 \times 10^{-9}$ \\
			\cmidrule{1-6}
			(2000, 200, 10, 0) & $2.41 \times 10^{-1}$ & $1.55 \times 10^{-8}$ & $1.55 \times 10^{-8}$ & $2.02 \times 10^{-1}$ & $1.55 \times 10^{-8}$ \\
			(2000, 200, 20, 0) & $1.78 \times 10^{-1}$ & $1.14 \times 10^{-8}$ & $1.14 \times 10^{-8}$ & $1.38 \times 10^{-1}$ & $1.14 \times 10^{-8}$ \\
			(2000, 200, 40, 0) & $1.33 \times 10^{-1}$ & $8.40 \times 10^{-9}$ & $8.40 \times 10^{-9}$ & $9.04 \times 10^{-2}$ & $8.40 \times 10^{-9}$ \\
			(2000, 200, 80, 0) & $1.01 \times 10^{-1}$ & $6.54 \times 10^{-9}$ & $6.54 \times 10^{-9}$ & $5.68 \times 10^{-2}$ & $6.54 \times 10^{-9}$ \\
			(2000, 200, 90, 0) & $9.72 \times 10^{-2}$ & $6.42 \times 10^{-9}$ & $6.42 \times 10^{-9}$ & $5.27 \times 10^{-2}$ & $6.42 \times 10^{-9}$ \\
			(2000, 200, 99, 0) & $9.32 \times 10^{-2}$ & $6.68 \times 10^{-9}$ & $6.68 \times 10^{-9}$ & $4.92 \times 10^{-2}$ & $6.68 \times 10^{-9}$ \\
			\cmidrule{1-6}
			(2000, 400, 10, 0) & $1.81 \times 10^{-1}$ & $9.01 \times 10^{-9}$ & $9.01 \times 10^{-9}$ & $1.37 \times 10^{-1}$ & $9.01 \times 10^{-9}$ \\
			(2000, 400, 20, 0) & $1.33 \times 10^{-1}$ & $6.55 \times 10^{-9}$ & $6.55 \times 10^{-9}$ & $8.95 \times 10^{-2}$ & $6.55 \times 10^{-9}$ \\
			(2000, 400, 40, 0) & $1.00 \times 10^{-1}$ & $4.88 \times 10^{-9}$ & $4.88 \times 10^{-9}$ & $5.71 \times 10^{-2}$ & $4.88 \times 10^{-9}$ \\
			(2000, 400, 80, 0) & $7.75 \times 10^{-2}$ & $4.00 \times 10^{-9}$ & $4.00 \times 10^{-9}$ & $3.38 \times 10^{-2}$ & $4.00 \times 10^{-9}$ \\
			(2000, 400, 90, 0) & $7.43 \times 10^{-2}$ & $4.13 \times 10^{-9}$ & $4.13 \times 10^{-9}$ & $3.08 \times 10^{-2}$ & $4.13 \times 10^{-9}$ \\
			(2000, 400, 99, 0) & $7.21 \times 10^{-2}$ & $4.33 \times 10^{-9}$ & $4.33 \times 10^{-9}$ & $2.82 \times 10^{-2}$ & $4.33 \times 10^{-9}$ \\
			\cmidrule{1-6}
			(2000, 800, 10, 0) & $1.34 \times 10^{-1}$ & $4.24 \times 10^{-9}$ & $4.24 \times 10^{-9}$ & $8.99 \times 10^{-2}$ & $4.24 \times 10^{-9}$ \\
			(2000, 800, 20, 0) & $1.00 \times 10^{-1}$ & $3.17 \times 10^{-9}$ & $3.17 \times 10^{-9}$ & $5.71 \times 10^{-2}$ & $3.17 \times 10^{-9}$ \\
			(2000, 800, 40, 0) & $7.80 \times 10^{-2}$ & $2.41 \times 10^{-9}$ & $2.41 \times 10^{-9}$ & $3.35 \times 10^{-2}$ & $2.41 \times 10^{-9}$ \\
			(2000, 800, 80, 0) & $6.18 \times 10^{-2}$ & $2.12 \times 10^{-9}$ & $2.12 \times 10^{-9}$ & $1.74 \times 10^{-2}$ & $2.12 \times 10^{-9}$ \\
			(2000, 800, 90, 0) & $5.97 \times 10^{-2}$ & $2.28 \times 10^{-9}$ & $2.28 \times 10^{-9}$ & $1.54 \times 10^{-2}$ & $2.28 \times 10^{-9}$ \\
			(2000, 800, 99, 0) & $5.77 \times 10^{-2}$ & $2.57 \times 10^{-9}$ & $2.57 \times 10^{-9}$ & $1.35 \times 10^{-2}$ & $2.57 \times 10^{-9}$ \\
			\cmidrule{1-6}
			(2000, 2000, 10, 0) & $9.24 \times 10^{-2}$ & $8.37 \times 10^{-12}$ & $8.37 \times 10^{-12}$ & $4.94 \times 10^{-2}$ & $9.11 \times 10^{-12}$ \\
			(2000, 2000, 20, 0) & $7.20 \times 10^{-2}$ & $3.46 \times 10^{-12}$ & $3.46 \times 10^{-12}$ & $2.79 \times 10^{-2}$ & $3.03 \times 10^{-12}$ \\
			(2000, 2000, 40, 0) & $5.75 \times 10^{-2}$ & $1.68 \times 10^{-12}$ & $1.68 \times 10^{-12}$ & $1.32 \times 10^{-2}$ & $1.77 \times 10^{-12}$ \\
			(2000, 2000, 80, 0) & $4.75 \times 10^{-2}$ & $2.13 \times 10^{-12}$ & $2.13 \times 10^{-12}$ & $2.71 \times 10^{-3}$ & $2.14 \times 10^{-12}$ \\
			(2000, 2000, 90, 0) & $4.61 \times 10^{-2}$ & $7.15 \times 10^{-12}$ & $7.15 \times 10^{-12}$ & $1.21 \times 10^{-3}$ & $7.03 \times 10^{-12}$ \\
			(2000, 2000, 99, 0) & $4.48 \times 10^{-2}$ & $9.47 \times 10^{-12}$ & $9.47 \times 10^{-12}$ & $1.19 \times 10^{-4}$ & $7.90 \times 10^{-12}$ \\
			\bottomrule
		\end{tabular}
	\end{table}

	\begin{table}[htbp]
		\centering
		\footnotesize 
		\setlength{\tabcolsep}{3pt} 
		\renewcommand{\arraystretch}{1.2} 
		\caption{Singular values of $A(col\_per, t)$ under different precisions for $t=1$.}
		\label{tb:ex1-2}
		\begin{tabular}{c c c c c c}
			\toprule
			\makecell{($m$, $n$, $col\_per$, $t$)} & 
			\makecell{Largest singular \\ value of \\ ${\rm double}(A)$} & 
			\makecell{Smallest positive \\ singular value of \\ ${\rm double}(A)$} & 
			\makecell{Smallest positive \\ singular value of \\ ${\rm double}({\rm single}(A))$} & 
			\makecell{Smallest positive \\ singular value of \\ ${\rm double}({\rm half}(A))$} & 
			\makecell{Smallest positive \\ singular value of \\ ${\rm double}({\rm bfloat16}(A))$} \\ 
			\midrule
			(2000, 100, 10, 1) & $3.36 \times 10^{-1}$ & $2.93 \times 10^{-8}$ & $2.93 \times 10^{-8}$ & $1.98 \times 10^{-4}$ & $2.93 \times 10^{-8}$ \\
			(2000, 100, 20, 1) & $2.46 \times 10^{-1}$ & $2.18 \times 10^{-8}$ & $2.18 \times 10^{-8}$ & $4.24 \times 10^{-5}$ & $2.18 \times 10^{-8}$ \\
			(2000, 100, 40, 1) & $1.80 \times 10^{-1}$ & $1.53 \times 10^{-8}$ & $1.53 \times 10^{-8}$ & $1.65 \times 10^{-6}$ & $1.53 \times 10^{-8}$ \\
			(2000, 100, 80, 1) & $1.33 \times 10^{-1}$ & $5.92 \times 10^{-5}$ & $5.92 \times 10^{-5}$ & $5.91 \times 10^{-5}$ & $5.92 \times 10^{-5}$ \\
			(2000, 100, 90, 1) & $1.27 \times 10^{-1}$ & $3.71 \times 10^{-6}$ & $3.71 \times 10^{-6}$ & $3.73 \times 10^{-6}$ & $3.70 \times 10^{-6}$ \\
			(2000, 100, 99, 1) & $1.22 \times 10^{-1}$ & $3.71 \times 10^{-3}$ & $3.71 \times 10^{-3}$ & $3.71 \times 10^{-3}$ & $3.72 \times 10^{-3}$ \\
			\cmidrule{1-6}
			(2000, 200, 10, 1) & $2.41 \times 10^{-1}$ & $2.02 \times 10^{-8}$ & $2.02 \times 10^{-8}$ & $4.27 \times 10^{-5}$ & $2.02 \times 10^{-8}$ \\
			(2000, 200, 20, 1) & $1.78 \times 10^{-1}$ & $1.45 \times 10^{-8}$ & $1.45 \times 10^{-8}$ & $8.75 \times 10^{-6}$ & $1.45 \times 10^{-8}$ \\
			(2000, 200, 40, 1) & $1.33 \times 10^{-1}$ & $1.03 \times 10^{-8}$ & $1.03 \times 10^{-8}$ & $2.37 \times 10^{-5}$ & $1.03 \times 10^{-8}$ \\
			(2000, 200, 80, 1) & $1.01 \times 10^{-1}$ & $7.39 \times 10^{-9}$ & $7.39 \times 10^{-9}$ & $2.60 \times 10^{-5}$ & $7.39 \times 10^{-9}$ \\
			(2000, 200, 90, 1) & $9.72 \times 10^{-2}$ & $7.10 \times 10^{-9}$ & $7.10 \times 10^{-9}$ & $2.07 \times 10^{-5}$ & $7.10 \times 10^{-9}$ \\
			(2000, 200, 99, 1) & $9.32 \times 10^{-2}$ & $7.29 \times 10^{-4}$ & $7.29 \times 10^{-4}$ & $7.29 \times 10^{-4}$ & $7.29 \times 10^{-4}$ \\
			\cmidrule{1-6}
			(2000, 400, 10, 1) & $1.81 \times 10^{-1}$ & $1.25 \times 10^{-8}$ & $1.25 \times 10^{-8}$ & $8.13 \times 10^{-6}$ & $1.25 \times 10^{-8}$ \\
			(2000, 400, 20, 1) & $1.33 \times 10^{-1}$ & $8.62 \times 10^{-9}$ & $8.62 \times 10^{-9}$ & $2.37 \times 10^{-5}$ & $8.62 \times 10^{-9}$ \\
			(2000, 400, 40, 1) & $1.00 \times 10^{-1}$ & $6.58 \times 10^{-9}$ & $6.58 \times 10^{-9}$ & $3.92 \times 10^{-5}$ & $6.58 \times 10^{-9}$ \\
			(2000, 400, 80, 1) & $7.75 \times 10^{-2}$ & $4.87 \times 10^{-9}$ & $4.87 \times 10^{-9}$ & $1.01 \times 10^{-5}$ & $4.87 \times 10^{-9}$ \\
			(2000, 400, 90, 1) & $7.43 \times 10^{-2}$ & $9.49 \times 10^{-5}$ & $9.49 \times 10^{-5}$ & $9.49 \times 10^{-5}$ & $9.49 \times 10^{-5}$ \\
			(2000, 400, 99, 1) & $7.21 \times 10^{-2}$ & $8.92 \times 10^{-5}$ & $8.92 \times 10^{-5}$ & $8.92 \times 10^{-5}$ & $8.91 \times 10^{-5}$ \\
			\cmidrule{1-6}
			(2000, 800, 10, 1) & $1.33 \times 10^{-1}$ & $6.35 \times 10^{-9}$ & $6.35 \times 10^{-9}$ & $2.36 \times 10^{-6}$ & $6.35 \times 10^{-9}$ \\
			(2000, 800, 20, 1) & $1.00 \times 10^{-1}$ & $4.70 \times 10^{-9}$ & $4.70 \times 10^{-9}$ & $1.39 \times 10^{-5}$ & $4.70 \times 10^{-9}$ \\
			(2000, 800, 40, 1) & $7.80 \times 10^{-2}$ & $3.50 \times 10^{-9}$ & $3.50 \times 10^{-9}$ & $1.06 \times 10^{-6}$ & $3.50 \times 10^{-9}$ \\
			(2000, 800, 80, 1) & $6.18 \times 10^{-2}$ & $2.95 \times 10^{-9}$ & $2.95 \times 10^{-9}$ & $3.29 \times 10^{-6}$ & $2.95 \times 10^{-9}$ \\
			(2000, 800, 90, 1) & $5.97 \times 10^{-2}$ & $2.93 \times 10^{-9}$ & $2.93 \times 10^{-9}$ & $1.89 \times 10^{-6}$ & $2.93 \times 10^{-9}$ \\
			(2000, 800, 99, 1) & $5.77 \times 10^{-2}$ & $7.70 \times 10^{-5}$ & $7.70 \times 10^{-5}$ & $7.70 \times 10^{-5}$ & $7.69 \times 10^{-5}$ \\
			\cmidrule{1-6}
			(2000, 2000, 10, 1) & $9.22 \times 10^{-2}$ & $6.07 \times 10^{-12}$ & $6.07 \times 10^{-12}$ & $4.58 \times 10^{-7}$ & $5.84 \times 10^{-12}$ \\
			(2000, 2000, 20, 1) & $7.20 \times 10^{-2}$ & $6.09 \times 10^{-12}$ & $6.09 \times 10^{-12}$ & $3.69 \times 10^{-6}$ & $5.89 \times 10^{-12}$ \\
			(2000, 2000, 40, 1) & $5.75 \times 10^{-2}$ & $1.56 \times 10^{-12}$ & $1.56 \times 10^{-12}$ & $9.42 \times 10^{-8}$ & $1.44 \times 10^{-12}$ \\
			(2000, 2000, 80, 1) & $4.75 \times 10^{-2}$ & $1.62 \times 10^{-11}$ & $1.62 \times 10^{-11}$ & $4.77 \times 10^{-7}$ & $1.70 \times 10^{-11}$ \\
			(2000, 2000, 90, 1) & $4.61 \times 10^{-2}$ & $1.66 \times 10^{-11}$ & $1.66 \times 10^{-11}$ & $2.54 \times 10^{-7}$ & $1.61 \times 10^{-11}$ \\
			(2000, 2000, 99, 1) & $4.48 \times 10^{-2}$ & $1.25 \times 10^{-6}$ & $1.25 \times 10^{-6}$ & $1.24 \times 10^{-6}$ & $1.21 \times 10^{-6}$ \\
			\bottomrule
		\end{tabular}
	\end{table}

	\begin{table}[htbp]
		\centering
		\footnotesize 
		\setlength{\tabcolsep}{3pt} 
		\renewcommand{\arraystretch}{1.2} 
		\caption{Singular values of $A(col\_per, t)$ under different precisions for $t=2$.}
		\label{tb:ex1-3}
		\begin{tabular}{c c c c c c}
			\toprule
			\makecell{($m$, $n$, $col\_per$, $t$)} & 
			\makecell{Largest singular \\ value of \\ ${\rm double}(A)$} & 
			\makecell{Smallest positive \\ singular value of \\ ${\rm double}(A)$} & 
			\makecell{Smallest positive \\ singular value of \\ ${\rm double}({\rm single}(A))$} & 
			\makecell{Smallest positive \\ singular value of \\ ${\rm double}({\rm half}(A))$} & 
			\makecell{Smallest positive \\ singular value of \\ ${\rm double}({\rm bfloat16}(A))$} \\ 
			\midrule
			(2000, 100, 10, 2) & $3.35 \times 10^{-1}$ & $1.99 \times 10^{-3}$ & $1.99 \times 10^{-3}$ & $1.99 \times 10^{-3}$ & $1.99 \times 10^{-3}$ \\
			(2000, 100, 20, 2) & $2.45 \times 10^{-1}$ & $1.87 \times 10^{-3}$ & $1.87 \times 10^{-3}$ & $1.87 \times 10^{-3}$ & $1.87 \times 10^{-3}$ \\
			(2000, 100, 40, 2) & $1.80 \times 10^{-1}$ & $6.19 \times 10^{-4}$ & $6.19 \times 10^{-4}$ & $6.19 \times 10^{-4}$ & $6.20 \times 10^{-4}$ \\
			(2000, 100, 80, 2) & $1.33 \times 10^{-1}$ & $1.61 \times 10^{-3}$ & $1.61 \times 10^{-3}$ & $1.61 \times 10^{-3}$ & $1.61 \times 10^{-3}$ \\
			(2000, 100, 90, 2) & $1.27 \times 10^{-1}$ & $1.94 \times 10^{-3}$ & $1.94 \times 10^{-3}$ & $1.94 \times 10^{-3}$ & $1.94 \times 10^{-3}$ \\
			(2000, 100, 99, 2) & $1.22 \times 10^{-1}$ & $4.89 \times 10^{-3}$ & $4.89 \times 10^{-3}$ & $4.89 \times 10^{-3}$ & $4.89 \times 10^{-3}$ \\
			\cmidrule{1-6}
			(2000, 200, 10, 2) & $2.40 \times 10^{-1}$ & $2.37 \times 10^{-4}$ & $2.37 \times 10^{-4}$ & $2.37 \times 10^{-4}$ & $2.37 \times 10^{-4}$ \\
			(2000, 200, 20, 2) & $1.78 \times 10^{-1}$ & $3.42 \times 10^{-4}$ & $3.42 \times 10^{-4}$ & $3.42 \times 10^{-4}$ & $3.42 \times 10^{-4}$ \\
			(2000, 200, 40, 2) & $1.33 \times 10^{-1}$ & $9.62 \times 10^{-5}$ & $9.62 \times 10^{-5}$ & $9.61 \times 10^{-5}$ & $9.62 \times 10^{-5}$ \\
			(2000, 200, 80, 2) & $1.01 \times 10^{-1}$ & $2.83 \times 10^{-4}$ & $2.83 \times 10^{-4}$ & $2.83 \times 10^{-4}$ & $2.82 \times 10^{-4}$ \\
			(2000, 200, 90, 2) & $9.72 \times 10^{-2}$ & $7.64 \times 10^{-4}$ & $7.64 \times 10^{-4}$ & $7.64 \times 10^{-4}$ & $7.63 \times 10^{-4}$ \\
			(2000, 200, 99, 2) & $9.32 \times 10^{-2}$ & $1.86 \times 10^{-3}$ & $1.86 \times 10^{-3}$ & $1.86 \times 10^{-3}$ & $1.86 \times 10^{-3}$ \\
			\cmidrule{1-6}
			(2000, 400, 10, 2) & $1.80 \times 10^{-1}$ & $1.07 \times 10^{-4}$ & $1.07 \times 10^{-4}$ & $1.07 \times 10^{-4}$ & $1.07 \times 10^{-4}$ \\
			(2000, 400, 20, 2) & $1.33 \times 10^{-1}$ & $1.64 \times 10^{-4}$ & $1.64 \times 10^{-4}$ & $1.64 \times 10^{-4}$ & $1.64 \times 10^{-4}$ \\
			(2000, 400, 40, 2) & $1.00 \times 10^{-1}$ & $6.38 \times 10^{-5}$ & $6.38 \times 10^{-5}$ & $6.38 \times 10^{-5}$ & $6.39 \times 10^{-5}$ \\
			(2000, 400, 80, 2) & $7.75 \times 10^{-2}$ & $1.87 \times 10^{-4}$ & $1.87 \times 10^{-4}$ & $1.87 \times 10^{-4}$ & $1.87 \times 10^{-4}$ \\
			(2000, 400, 90, 2) & $7.43 \times 10^{-2}$ & $1.59 \times 10^{-4}$ & $1.59 \times 10^{-4}$ & $1.59 \times 10^{-4}$ & $1.59 \times 10^{-4}$ \\
			(2000, 400, 99, 2) & $7.21 \times 10^{-2}$ & $4.26 \times 10^{-4}$ & $4.26 \times 10^{-4}$ & $4.26 \times 10^{-4}$ & $4.25 \times 10^{-4}$ \\
			\cmidrule{1-6}
			(2000, 800, 10, 2) & $1.33 \times 10^{-1}$ & $3.18 \times 10^{-5}$ & $3.18 \times 10^{-5}$ & $3.19 \times 10^{-5}$ & $3.18 \times 10^{-5}$ \\
			(2000, 800, 20, 2) & $1.00 \times 10^{-1}$ & $4.22 \times 10^{-5}$ & $4.22 \times 10^{-5}$ & $4.22 \times 10^{-5}$ & $4.22 \times 10^{-5}$ \\
			(2000, 800, 40, 2) & $7.79 \times 10^{-2}$ & $3.60 \times 10^{-5}$ & $3.60 \times 10^{-5}$ & $3.59 \times 10^{-5}$ & $3.59 \times 10^{-5}$ \\
			(2000, 800, 80, 2) & $6.18 \times 10^{-2}$ & $5.81 \times 10^{-5}$ & $5.81 \times 10^{-5}$ & $5.81 \times 10^{-5}$ & $5.82 \times 10^{-5}$ \\
			(2000, 800, 90, 2) & $5.97 \times 10^{-2}$ & $1.31 \times 10^{-4}$ & $1.31 \times 10^{-4}$ & $1.31 \times 10^{-4}$ & $1.31 \times 10^{-4}$ \\
			(2000, 800, 99, 2) & $5.77 \times 10^{-2}$ & $3.47 \times 10^{-4}$ & $3.47 \times 10^{-4}$ & $3.47 \times 10^{-4}$ & $3.47 \times 10^{-4}$ \\
			\cmidrule{1-6}
			(2000, 2000, 10, 2) & $9.20 \times 10^{-2}$ & $8.29 \times 10^{-12}$ & $8.29 \times 10^{-12}$ & $4.66 \times 10^{-9}$ & $8.37 \times 10^{-12}$ \\
			(2000, 2000, 20, 2) & $7.19 \times 10^{-2}$ & $9.54 \times 10^{-12}$ & $9.54 \times 10^{-12}$ & $4.04 \times 10^{-7}$ & $1.00 \times 10^{-11}$ \\
			(2000, 2000, 40, 2) & $5.75 \times 10^{-2}$ & $1.38 \times 10^{-11}$ & $1.38 \times 10^{-11}$ & $7.08 \times 10^{-10}$ & $1.47 \times 10^{-11}$ \\
			(2000, 2000, 80, 2) & $4.75 \times 10^{-2}$ & $2.15 \times 10^{-7}$ & $2.15 \times 10^{-7}$ & $2.15 \times 10^{-7}$ & $2.04 \times 10^{-7}$ \\
			(2000, 2000, 90, 2) & $4.61 \times 10^{-2}$ & $4.90 \times 10^{-7}$ & $4.90 \times 10^{-7}$ & $4.97 \times 10^{-7}$ & $5.13 \times 10^{-7}$ \\
			(2000, 2000, 99, 2) & $4.48 \times 10^{-2}$ & $1.37 \times 10^{-6}$ & $1.37 \times 10^{-6}$ & $1.37 \times 10^{-6}$ & $1.32 \times 10^{-6}$ \\
			\bottomrule
		\end{tabular}
	\end{table}

	\begin{table}[htbp]
		\centering
		\footnotesize 
		\setlength{\tabcolsep}{3pt} 
		\renewcommand{\arraystretch}{1.2}
		\caption{Singular values of $A(t, \Delta_t)$ under different precisions with $m=2000$ and $n=100$. }
		\label{tb:ex2}
		\begin{tabular}{cccccc}
			\toprule
			($t$, $\Delta_t$) & \makecell{Largest singular\\value of \\${\rm double}(A)$} & \makecell{Smallest positive\\singular value of\\${\rm double}(A)$} & \makecell{Smallest positive\\singular value of\\${\rm double}({\rm single}(A))$} & \makecell{Smallest positive\\singular value of\\${\rm double}({\rm half}(A))$} & \makecell{Smallest positive\\singular value of\\${\rm double}({\rm bfloat16}(A))$} \\
			\midrule
			$(1, 1)$ & $1.48 \times 10^{-1}$ & $2.67 \times 10^{-3}$ & $2.67 \times 10^{-3}$ & $2.67 \times 10^{-3}$ & $2.67 \times 10^{-3}$ \\
			$(1, 10^{-1})$ & $1.48 \times 10^{-1}$ & $2.67 \times 10^{-4}$ & $2.67 \times 10^{-4}$ & $2.67 \times 10^{-4}$ & $2.53 \times 10^{-4}$ \\
			$(1, 10^{-2})$ & $1.48 \times 10^{-1}$ & $2.67 \times 10^{-5}$ & $2.67 \times 10^{-5}$ & $2.50 \times 10^{-5}$ & $2.11 \times 10^{-5}$ \\
			\cmidrule{1-6}
			$(10^{-1}, 10^{-1})$ & $1.48 \times 10^{-1}$ & $2.67 \times 10^{-4}$ & $2.67 \times 10^{-4}$ & $2.67 \times 10^{-4}$ & $2.67 \times 10^{-4}$ \\
			$(10^{-1}, 10^{-2})$ & $1.48 \times 10^{-1}$ & $2.67 \times 10^{-5}$ & $2.67 \times 10^{-5}$ & $2.67 \times 10^{-5}$ & $2.63 \times 10^{-5}$ \\
			$(10^{-1}, 10^{-3})$ & $1.48 \times 10^{-1}$ & $2.67 \times 10^{-6}$ & $2.67 \times 10^{-6}$ & $2.63 \times 10^{-6}$ & $2.63 \times 10^{-6}$ \\
			\cmidrule{1-6}
			$(10^{-2}, 10^{-2})$ & $1.48 \times 10^{-1}$ & $2.67 \times 10^{-5}$ & $2.67 \times 10^{-5}$ & $2.67 \times 10^{-5}$ & $2.67 \times 10^{-5}$ \\
			$(10^{-2}, 10^{-3})$ & $1.48 \times 10^{-1}$ & $2.67 \times 10^{-6}$ & $2.67 \times 10^{-6}$ & $2.68 \times 10^{-6}$ & $2.80 \times 10^{-6}$ \\
			$(10^{-2}, 10^{-4})$ & $1.48 \times 10^{-1}$ & $2.67 \times 10^{-7}$ & $2.67 \times 10^{-7}$ & $2.88 \times 10^{-7}$ & $3.29 \times 10^{-7}$ \\
			\cmidrule{1-6}
			$(10^{-3}, 10^{-3})$ & $1.48 \times 10^{-1}$ & $2.67 \times 10^{-6}$ & $2.67 \times 10^{-6}$ & $2.68 \times 10^{-6}$ & $2.68 \times 10^{-6}$ \\
			$(10^{-3}, 10^{-4})$ & $1.48 \times 10^{-1}$ & $2.67 \times 10^{-7}$ & $2.67 \times 10^{-7}$ & $2.47 \times 10^{-7}$ & $2.68 \times 10^{-7}$ \\
			$(10^{-3}, 10^{-5})$ & $1.48 \times 10^{-1}$ & $2.67 \times 10^{-8}$ & $2.67 \times 10^{-8}$ & $4.12 \times 10^{-8}$ & $2.06 \times 10^{-8}$ \\
			\cmidrule{1-6}
			$(10^{-4}, 10^{-4})$ & $1.48 \times 10^{-1}$ & $2.67 \times 10^{-7}$ & $2.67 \times 10^{-7}$ & $2.88 \times 10^{-7}$ & $2.68 \times 10^{-7}$ \\
			$(10^{-4}, 10^{-5})$ & $1.48 \times 10^{-1}$ & $2.67 \times 10^{-8}$ & $2.67 \times 10^{-8}$ & $4.12 \times 10^{-8}$ & $2.70 \times 10^{-8}$ \\
			$(10^{-4}, 10^{-6})$ & $1.48 \times 10^{-1}$ & $2.67 \times 10^{-9}$ & $2.67 \times 10^{-9}$ & $4.12 \times 10^{-8}$ & $2.57 \times 10^{-9}$ \\
			\cmidrule{1-6}
			$(10^{-5}, 10^{-5})$ & $1.48 \times 10^{-1}$ & $2.67 \times 10^{-8}$ & $2.67 \times 10^{-8}$ & $7.77 \times 10^{-2}$ & $2.67 \times 10^{-8}$ \\
			$(10^{-5}, 10^{-6})$ & $1.48 \times 10^{-1}$ & $2.67 \times 10^{-9}$ & $2.67 \times 10^{-9}$ & $7.77 \times 10^{-2}$ & $2.73 \times 10^{-9}$ \\
			$(10^{-5}, 10^{-7})$ & $1.48 \times 10^{-1}$ & $2.67 \times 10^{-10}$ & $2.67 \times 10^{-10}$ & $7.77 \times 10^{-2}$ & $3.22 \times 10^{-10}$ \\
			\cmidrule{1-6}
			$(10^{-6}, 10^{-6})$ & $1.48 \times 10^{-1}$ & $2.67 \times 10^{-9}$ & $2.67 \times 10^{-9}$ & $7.77 \times 10^{-2}$ & $2.67 \times 10^{-9}$ \\
			$(10^{-6}, 10^{-7})$ & $1.48 \times 10^{-1}$ & $2.67 \times 10^{-10}$ & $2.67 \times 10^{-10}$ & $7.77 \times 10^{-2}$ & $2.61 \times 10^{-10}$ \\
			$(10^{-6}, 10^{-8})$ & $1.48 \times 10^{-1}$ & $2.67 \times 10^{-11}$ & $2.67 \times 10^{-11}$ & $7.77 \times 10^{-2}$ & $2.01 \times 10^{-11}$ \\
			\cmidrule{1-6}
			$(10^{-7}, 10^{-7})$ & $1.48 \times 10^{-1}$ & $2.67 \times 10^{-10}$ & $2.67 \times 10^{-10}$ & $7.77 \times 10^{-2}$ & $2.68 \times 10^{-10}$ \\
			$(10^{-7}, 10^{-8})$ & $1.48 \times 10^{-1}$ & $2.67 \times 10^{-11}$ & $2.67 \times 10^{-11}$ & $7.77 \times 10^{-2}$ & $2.64 \times 10^{-11}$ \\
			$(10^{-7}, 10^{-9})$ & $1.48 \times 10^{-1}$ & $2.67 \times 10^{-12}$ & $2.67 \times 10^{-12}$ & $7.77 \times 10^{-2}$ & $2.51 \times 10^{-12}$ \\
			\cmidrule{1-6}
			$(10^{-8}, 10^{-8})$ & $1.48 \times 10^{-1}$ & $2.67 \times 10^{-11}$ & $2.67 \times 10^{-11}$ & $7.77 \times 10^{-2}$ & $2.67 \times 10^{-11}$ \\
			$(10^{-8}, 10^{-9})$ & $1.48 \times 10^{-1}$ & $2.67 \times 10^{-12}$ & $2.67 \times 10^{-12}$ & $7.77 \times 10^{-2}$ & $2.67 \times 10^{-12}$ \\
			$(10^{-8}, 10^{-10})$ & $1.48 \times 10^{-1}$ & $7.77 \times 10^{-2}$ & $7.77 \times 10^{-2}$ & $7.77 \times 10^{-2}$ & $7.77 \times 10^{-2}$ \\
			\bottomrule
		\end{tabular}
	\end{table}
	
	\newpage

	\section{Some propositions}
	
	\begin{proposition}\label{pro:computedelk}
		For $M^k \in \R^{m\times n}$ with rank $r\leq m \leq n$, let $M^k = U\Sigma V^\top$ be its SVD decomposition, where $U \in \R^{m\times m}$, $\Sigma \in \R^{m\times n}$, and $V \in \R^{n\times n}$. $\Sigma$ is a diagonal matrix with diagonal entries $\Sigma_{ii} = \sigma_i$ for $1\leq i \leq r$, where $\{ \sigma_i  \}_{i=1}^r$ are the singular values with $\sigma_1\geq \cdots \geq \sigma_r>0$, and $\Sigma_{ii} = 0$ for $i>r$. Assume $\sigma_1 \geq a>0$. Then for any $0<c\leq 1$, the following properties hold. 
		
		(i) There exists an integer $\tilde r \leq r$ such that $\sigma_i \geq \frac{c\sigma_1}{r}$ for $i\leq \tilde r$ and $\sigma_i < \frac{c\sigma_1}{r}$ for the rest $i$. 
		
		(ii)The rescaled singular value $\frac{\sigma_i}{\sqrt{\sum_{i=1}^r \sigma_i^2} + \epsilon_{ns}}$ in (\ref{eq:sv-rmk}) is lower-bounded by $\frac{ca}{r(\sqrt{r}a + \epsilon_{ns})}$ for $i \leq \tilde r$. 
		
		(iii) For the above lower bound $l=\frac{ca}{r(\sqrt{r}a + \epsilon_{ns})}$ and upper bound $u=1$, suppose an iterative algorithm produces ${\hat D}^k = -U{\hat \Lambda}V^\top$ such that $|{\hat \Lambda}_{ii} - 1| \leq {\tilde \delta}_k$ for $i \leq {\tilde r}$, and $0\leq {\hat \Lambda}_{ii} \leq 1+ (c+{\tilde \delta}_k)/(1+c)$ for the rest $i$, with $\tilde \delta_k <1$. Then the conditions
		$$
		\|{\hat D}^k \| \leq 1+\delta_k,  \quad  \langle M^k, {\hat D}^k \rangle \leq -(1-\delta_k) \|M^k\|_\star
		$$ 
		are satisfied with $\delta_k = (c+{\tilde \delta}_k)/(1+c)$, where $\|\cdot\|$ denotes the spectral norm and $\|\cdot\|_\star$ the nuclear norm. 
	\end{proposition}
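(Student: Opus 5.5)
Parts (i) and (ii) are elementary, and part (iii) is a direct trace computation in the SVD basis.

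For (i), I set $\tilde r$ to be the largest index $i\le r$ with $\sigma_i \ge c\sigma_1/r$. Such an index exists because $i=1$ qualifies: $\sigma_1 \ge c\sigma_1/r$ since $c\le 1\le r$. Monotonicity of the singular values then gives the stated dichotomy.

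For (ii), I take $i\le\tilde r$ and bound the denominator using $\sqrt{\sum_j\sigma_j^2}\le \sqrt r\,\sigma_1$. This gives
$$\frac{\sigma_i}{\sqrt{\sum_j \sigma_j^2}+\epsilon_{ns}} \ge \frac{c\sigma_1/r}{\sqrt r\sigma_1+\epsilon_{ns}} = \frac{c}{r(\sqrt r+\epsilon_{ns}/\sigma_1)}.$$
The right-hand side is increasing in $\sigma_1$, so using $\sigma_1\ge a$ yields the bound $\frac{ca}{r(\sqrt r a+\epsilon_{ns})}$.

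For (iii), the spectral norm part is immediate, since $\|\hat D^k\|_2=\max_i \hat\Lambda_{ii}$. On the indices $i\le\tilde r$ this maximum is at most $1+\tilde\delta_k$. On the remaining indices it is at most $1+(c+\tilde\delta_k)/(1+c)$. Because $\tilde\delta_k\le(c+\tilde\delta_k)/(1+c)$, which is equivalent to $c\tilde\delta_k\le c$, i.e. $\tilde\delta_k\le 1$, both are at most $1+\delta_k$.

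For the inner product in (iii), I compute
$$\langle M^k,\hat D^k\rangle = -\mathrm{tr}(\Sigma^\top\hat\Lambda) = -\sum_{i\le r}\sigma_i\hat\Lambda_{ii}.$$
I drop the nonnegative terms with $i>\tilde r$ and use $\hat\Lambda_{ii}\ge 1-\tilde\delta_k$ for $i\le\tilde r$. This gives $\langle M^k,\hat D^k\rangle\le -(1-\tilde\delta_k)S_1$, where $S_1=\sum_{i\le\tilde r}\sigma_i$ and $S_2=\sum_{\tilde r<i\le r}\sigma_i$, so that $\|M^k\|_\star=S_1+S_2$. The tail satisfies $S_2< r\cdot c\sigma_1/r = c\sigma_1\le cS_1$. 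Hence $\|M^k\|_\star\le(1+c)S_1$, and therefore
$$\langle M^k,\hat D^k\rangle \le -\frac{1-\tilde\delta_k}{1+c}\|M^k\|_\star.$$
Finally, $1-\frac{c+\tilde\delta_k}{1+c}=\frac{1-\tilde\delta_k}{1+c}$, which is exactly the required coefficient $1-\delta_k$.

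The only mildly delicate step is the tail bound $S_2\le cS_1$, and it relies solely on the threshold $c\sigma_1/r$ chosen in (i).

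I would also check that $\delta_k<1$, so that the conclusion is consistent with Assumption \ref{as:inexactLMO}. This follows from $\tilde\delta_k<1$: then $c+\tilde\delta_k<1+c$, so $\delta_k=(c+\tilde\delta_k)/(1+c)<1$.
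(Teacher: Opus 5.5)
Your proof is correct and follows the paper's argument essentially step for step. You use the same threshold $c\sigma_1/r$ to define $\tilde r$, the same bound $\sqrt{\sum_j\sigma_j^2}\le\sqrt{r}\,\sigma_1$ in (ii), and in (iii) the same tail estimate $\sum_{\tilde r<i\le r}\sigma_i< c\sigma_1\le c\sum_{i\le\tilde r}\sigma_i$, giving $\|M^k\|_\star\le(1+c)\sum_{i\le\tilde r}\sigma_i$ and hence the coefficient $(1-\tilde\delta_k)/(1+c)=1-\delta_k$. Your extra remarks (existence of $\tilde r$ via $i=1$, monotonicity in $\sigma_1$, and $\tilde\delta_k\le\delta_k<1$) make explicit details the paper leaves implicit.
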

	
	\begin{proof}
		
		(i) The property holds since either $\tilde r = m$ such that $\sigma_i \geq \frac{c\sigma_1}{r}$ for all $i\leq m$, or there exists an integer $\tilde r <m$ such that $\sigma_{\tilde r} \geq \frac{c\sigma_1}{r}$ and $\sigma_{{\tilde r}+1} < \frac{c\sigma_1}{r}$. 
		
		(ii) Since $\sigma_i \geq \frac{c\sigma_1}{r}$ for $i\leq \tilde r$ from (i), we have 
		$$
		\frac{\sigma_i}{\sqrt{\sum_{i=1}^r \sigma_i^2} + \epsilon_{ns}} \geq \frac{\sigma_i}{\sqrt{r}\sigma_1 + \epsilon_{ns}} \geq \frac{c}{r} \cdot \frac{\sigma_1}{\sqrt{r}\sigma_1 + \epsilon_{ns}} \geq \frac{ca}{r(\sqrt{r}a + \epsilon_{ns})},  
		$$
		for $i\leq \tilde r$. 
		
		(iii) First, since $0 < 1- {\tilde \delta}_k \leq {\hat \Lambda}_{ii} \leq 1 + {\tilde \delta}_k < 1 + \delta_k$ for $i\leq \tilde r$, and  $0\leq {\hat \Lambda}_{ii} \leq 1 + \delta_k$ for the rest $i$, we know $\|{\hat D}^k\| = \|{\hat D}^k\|_2 = \max_i\{ {\hat \Lambda}_{ii} \} \leq 1+\delta_k$. 
		
		It remains to prove the inner-product bound. From (i), we have 
		$$
		\|M^k\|_\star = \sum_{i=1}^r \sigma_i = \sum_{i=1}^{\tilde r} \sigma_i + \sum_{\tilde r + 1}^r \sigma_i \leq \sum_{i=1}^{\tilde r} \sigma_i + c \sigma_1 \leq (1+c) \sum_{i=1}^{\tilde r} \sigma_i. 
		$$
		Then we arrive at 
		$$
		\langle M^k, {\hat D}^k \rangle = - \sum_{i=1}^r \sigma_i {\hat \Lambda}_{ii}  
		\leq - (1 - {\tilde \delta}_k) \sum_{i=1}^{\tilde r} \sigma_i 
		\leq - \frac{1 - {\tilde \delta}_k}{1+c} \|M^k\|_\star 
		= - (1-\delta_k) \|M^k\|_\star,
		$$
		where the last equality follows from $\delta_k = (c+{\tilde \delta}_k)/(1+c)$. This completes the proof. 
		
	\end{proof}

	\begin{proposition}\label{pro:ptv}
		Under the premise of Theorem 4.1 in \citep{amsel2025polar}, for $t= 1, \dots, T$, we have $0\leq p_t(x) \leq l_{t+1}$ for $x \in [0, l_t]$. 
	\end{proposition}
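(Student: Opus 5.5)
The plan is to derive the statement from the structural properties of the optimal polynomials in Theorem 4.1 of \citep{amsel2025polar}. Recall the setup there: $l_1 = l$ and $u_1 = 1$. For each $t$, $p_t$ is the odd polynomial of degree $2q+1$ that minimizes $E_t \eqdef \max_{x\in[l_t,u_t]} |1 - p_t(x)|$. The next interval endpoints are $l_{t+1} = p_t(l_t) = 1 - E_t$ and $u_{t+1} = 1 + E_t$. Since $p_t$ is odd, $p_t(0)=0$. The claim $0 \le p_t(x) \le l_{t+1}$ on $[0,l_t]$ will therefore follow from two facts: (a) $p_t$ is nondecreasing on $[0,l_t]$, and (b) $p_t(l_t) = l_{t+1} \ge 0$.

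Fact (b) is the easy step. First, $E_t \le 1$, because the polynomial $p \equiv 0$ (which is odd) already achieves error $1$. In fact $E_t < 1$, since a small multiple $\epsilon x$ with $\epsilon>0$ gives error strictly less than $1$ on $[l_t,u_t] \subset (0,2)$; this is where $0<l_t$ and $u_t < 2$ (inductively) are used. Second, the equioscillation characterization in Theorem 4.1 gives $p_t(l_t) = 1 - E_t$. Together these yield $l_{t+1} = 1-E_t \in (0,1]$.

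Fact (a) is the main step. I would argue it by counting critical points. Odd polynomials of degree $\le 2q+1$ restricted to $(0,\infty)$ form a Haar space of dimension $q+1$. Chebyshev equioscillation therefore gives at least $q+2$ points $l_t \le x_0 < x_1 < \cdots < x_{q+1} \le u_t$ at which $1-p_t$ alternately attains $\pm E_t$. At most two of these points are endpoints of $[l_t,u_t]$, so at least $q$ of them are interior local extrema of $p_t$. Each interior extremum is a zero of $p_t'$. Now $p_t'$ is an even polynomial of degree $2q$, so each positive root $x$ comes paired with the root $-x$. It follows that $p_t'$ has at most $q$ roots in $(0,\infty)$, and all of them already lie in $(l_t,u_t)$. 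Consequently $p_t'$ has no zero in $[0,l_t]$ (the point $0$ is excluded by the same count, since all $2q$ roots are accounted for by $\pm x_j$), so $p_t'$ has constant sign there. Because $p_t(l_t) = 1-E_t > 0 = p_t(0)$, that sign is positive. Hence $p_t$ is increasing on $[0,l_t]$, and for $x \in [0,l_t]$ we get $0 = p_t(0) \le p_t(x) \le p_t(l_t) = l_{t+1}$.

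The delicate point is the root count in the last step. I need to confirm that the alternation set given by Theorem 4.1 has at least $q+2$ points, which requires the Haar condition on $[l_t,u_t]\subset(0,\infty)$. I also need to handle the degenerate case where an alternation point coincides with a double root of $p_t'$. In that case the multiplicity count only becomes tighter, so the conclusion still holds.
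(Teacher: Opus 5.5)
Your proposal is correct and follows essentially the same route as the paper. Both arguments show that the even polynomial $p_t'$ has at most $q$ positive zeros and that all of them lie in $(l_t,u_t)$, so $p_t'$ has constant sign on $[0,l_t]$; monotonicity together with $p_t(0)=0$ and $p_t(l_t)=l_{t+1}\ge 0$ then gives the bound. The only difference is that you derive the $q$ interior extrema from Chebyshev equioscillation in the Haar space of odd polynomials, and justify $l_{t+1}>0$ via $E_t<1$, whereas the paper simply cites Lemma A.1 and Theorem 4.1 of the Polar Express paper for these facts.
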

	
	\begin{proof}
		From Lemma A.1 and Theorem 4.1 in \citep{amsel2025polar}, we know $l_2 = p_1(l_1) = p_1(l)$, and $p_1(x)$ has at least $q$ distinct extremums on $(l, 1)$,  which implies that the equation $\frac{dp_1(x)}{dx} =0$ has at least $q$ distinct solutions on $(l, 1)$. Noticing that $\frac{dp_1(x)}{dx}$ is an even degree polynomial with degree at most $2q$, we know that $\frac{dp_1(x)}{dx} = 0$ has at most $q$ distinct solutions on $[0, \infty)$, and hence it has exactly $q$ distinct solutions on $[0, \infty)$ and the solutions lie in $(l, 1)$. Specifically, we know $\frac{dp_1(x)}{dx} = 0$ has no solution on $[0, l]$, and thus for $x \in [0, l]$, either $\frac{dp_1(x)}{dx}>0$ or $\frac{dp_1(x)}{dx} <0$. Then, from $p_1(0)=0$ and $p_1(l) = l_2\geq 0$, we have that $p_1(x)$ is increasing on $[0, l]$. Therefore $p_1(x) \in [0, l_2]$ on $[0, l]$. Repeating the above analysis $T$ times, we can get $p_t(x) \in [0, l_{t+1}]$ on $[0, l_t]$ for $t=1, ..., T$.

	\end{proof}

	\newpage
	
	\section{Proof of Theorem \ref{th:Inexact-gluon}}

	First, from the first condition in Assumption \ref{as:inexactLMO-layer} or Assumption \ref{as:inexactLMO-layer-2}, we can obtain an upper bound for $\|X_i^{k+1} - X_i^k\|_{(i)}$, that is 
	\begin{equation}\label{eq:diffXik-bd}
		\|X_i^{k+1} - X_i^k\|_{(i)} = t_i \eta \|{\hat D}_i^k\|_{(i)} \leq (1+\delta_{k, i}) t_i \eta. 
	\end{equation}
	
	Under Assumption \ref{as:L0L1smooth}, similar to the proof of Theorem 8 in \citep{riabinin2025gluon}, we can get  
	\begin{eqnarray}
		f(X^{k+1}) &\leq& f(X^k) + \sum_{i=1}^p \left[  \langle M_i^k, X_i^{k+1} - X_i^k \rangle + \|\nabla_i f(X^k) - M_i^k\|_{(i)\star} \|X_i^{k+1} - X_i^k\|_{(i)}   \right] \nonumber \\ 
		&& + \sum_{i=1}^p \frac{L_i^0 + L_i^1 \|\nabla_i f(X^k)\|_{(i)\star}}{2} \|X_i^{k+1} - X_i^k\|_{(i)}^2. \label{eq:fXk+1<}
	\end{eqnarray}
	
	Now we estimate $\sum_{i=1}^p \langle M_i^k, X_i^{k+1} - X_i^k \rangle$. For $i$ such that (\ref{eq:hatdk-obj-layer}) holds, i.e., $i \notin \I^k$, we have 
	\begin{eqnarray*}
		\langle M_i^k, X_i^{k+1} - X_i^k \rangle &=& t_i \eta \langle M_i^k, {\hat D}_i^k \rangle \\ 
		&\overset{(\ref{eq:hatdk-obj-layer})}{\leq}&  -(1-\delta_{k, i})t_i \eta \|M_i^k\|_{(i)\star} \\ 
		&\leq& -(1-\delta_{k, i})t_i \eta \|\nabla_i f(X^k)\|_{(i)\star}  + (1-\delta_{k, i})t_i \eta \|M_i^k - \nabla_i f(X^k)\|_{(i)\star}, 
	\end{eqnarray*}
	where the last inequality comes from the triangle inequality. 
	
	\noindent For $i \in \I^k$, under Assumption \ref{as:inexactLMO-layer}, we know $\|M_i^k\|_{(i)\star} \leq \epsilon_{1, i}$. Thus, 
	\begin{eqnarray*}
		&& \langle M_i^k, X_i^{k+1} - X_i^k \rangle \\ 
		&\leq& \|X_i^{k+1}- X_i^k\|_{(i)} \|M_i^k\|_{(i)\star} \\ 
		&\overset{(\ref{eq:diffXik-bd})}{\leq}& (1+\delta_{k, i}) t_i \eta \|M_i^k\|_{(i)\star}  \\ 
		&=&  -(1-\delta_{k, i})t_i \eta \|M_i^k\|_{(i)\star} + 2t_i \eta \|M_i^k\|_{(i)\star}   \\ 
		&\leq& -(1-\delta_{k, i})t_i \eta \|\nabla_i f(X^k)\|_{(i)\star}  + (1-\delta_{k, i})t_i \eta \|M_i^k - \nabla_i f(X^k)\|_{(i)\star} + 2t_i \eta \|M_i^k\|_{(i)\star} \\ 
		&\leq& -(1-\delta_{k, i})t_i \eta \|\nabla_i f(X^k)\|_{(i)\star}  + (1-\delta_{k, i})t_i \eta \|M_i^k - \nabla_i f(X^k)\|_{(i)\star} + 2t_i \eta \epsilon_{1, i}, 
	\end{eqnarray*}
	where the first inequality is by the Cauchy-Schwarz inequality, and the third inequality comes from the triangle inequality.  For $i \in \I^k$, under Assumption \ref{as:inexactLMO-layer-2}, we have  $\|M_i^k\|_{(i)\star} \leq \epsilon_{1, i}$ and $\langle M_i^k, {\hat D}_i^k \rangle \leq 0$. Hence, 
	\begin{eqnarray*}
		&& \langle M_i^k, X_i^{k+1} - X_i^k \rangle \\ 
		&\leq& 0 \\
		&=&  -(1-\delta_{k, i})t_i \eta \|M_i^k\|_{(i)\star} + (1-\delta_{k, i})t_i \eta \|M_i^k\|_{(i)\star}   \\ 
		&\leq& -(1-\delta_{k, i})t_i \eta \|\nabla_i f(X^k)\|_{(i)\star}  + (1-\delta_{k, i})t_i \eta \|M_i^k - \nabla_i f(X^k)\|_{(i)\star} + (1-\delta_{k, i})t_i \eta \|M_i^k\|_{(i)\star} \\ 
		&\leq& -(1-\delta_{k, i})t_i \eta \|\nabla_i f(X^k)\|_{(i)\star}  + (1-\delta_{k, i})t_i \eta \|M_i^k - \nabla_i f(X^k)\|_{(i)\star} + (1-\delta_{k, i})t_i \eta \epsilon_{1, i} \\ 
		&=& -(1-\delta_{k, i})t_i \eta \|\nabla_i f(X^k)\|_{(i)\star}  + (1-\delta_{k, i})t_i \eta \|M_i^k - \nabla_i f(X^k)\|_{(i)\star} + (1-\delta)t_i \eta \epsilon_{1, i} 
	\end{eqnarray*}
	where the second inequality comes from the triangle inequality.
	
	\noindent Combining the above two cases, we arrive at 
	\begin{eqnarray}
		\sum_{i=1}^p \langle M_i^k, X_i^{k+1} - X_i^k \rangle &\leq& \sum_{i=1}^p \left[   -(1-\delta_{k, i})t_i \eta \|\nabla_i f(X^k)\|_{(i)\star}  + (1-\delta_{k, i})t_i \eta \|M_i^k - \nabla_i f(X^k)\|_{(i)\star}   \right]  \nonumber \\ 
		&&  +  \sum_{i\in \I^k} \phi t_i \eta \epsilon_{1, i}. \label{eq:sumMik-Ik}
	\end{eqnarray}
	
	From (\ref{eq:diffXik-bd}), (\ref{eq:fXk+1<}), and (\ref{eq:sumMik-Ik}), we have 
	\begin{eqnarray*}
		f(X^{k+1}) &\leq& f(X^k) + \sum_{i=1}^p \left[  -(1-\delta_{k, i})t_i \eta \|\nabla_i f(X^k)\|_{(i)\star}  + 2 t_i \eta \|M_i^k - \nabla_i f(X^k)\|_{(i)\star}  \right] \\ 
		&&  + \sum_{i=1}^p \frac{L_i^0 + L_i^1 \|\nabla_i f(X^k)\|_{(i)\star}}{2} (1+\delta_{k, i})^2 t_i^2 \eta^2  +   \sum_{i\in \I^k} \phi t_i \eta\epsilon_{1, i}. 
	\end{eqnarray*}
	
	Taking expectations yields that 
	\begin{eqnarray*}
		\E[f(X^{k+1})] &\leq& \E[f(X^k)] + \sum_{i=1}^p \left[  -(1-\delta_{k, i})t_i \eta \E[\|\nabla_i f(X^k)\|_{(i)\star}]  + 2 t_i \eta \E[\|M_i^k - \nabla_i f(X^k)\|_{(i)\star}]  \right] \\ 
		&&  + \sum_{i=1}^p \frac{L_i^0 + L_i^1 \E[\|\nabla_i f(X^k)\|_{(i)\star}] }{2} (1+\delta_{k, i})^2 t_i^2 \eta^2  +   \sum_{i\in \I^k} \phi t_i \eta\epsilon_{1, i}. 
	\end{eqnarray*}
	
	Telescoping the above inequality gives 
	\begin{eqnarray}
		&&\sum_{i=1}^p \sum_{k=0}^{K-1} t_i \eta \E \left[  \| \nabla_i f(X^k)\|_{(i)\star} \right] (1-\delta_{k, i})   \nonumber  \\ 
		& \leq& \Delta^0 + \sum_{i=1}^p \left[  2 \sum_{k=0}^{K-1} t_i \eta \E \left[  \|M_i^k - \nabla_i f(X^k)\|_{(i)\star}  +  \sum_{k=0}^{K-1} \frac{(1+\delta_{k, i})^2 L_i^0}{2} t_i^2 \eta^2   \right] \right.  \nonumber \\ 
		&& \hskip 20mm \left.  + \sum_{k=0}^{K-1} \frac{(1+\delta_{k, i})^2L_i^1 t_i^2 \eta^2}{2} \E \left[  \| \nabla_i f(X^k)\|_{(i)\star}  \right] \right]  + \sum_{k=0}^{K-1} \sum_{i\in \I^k} \phi t_i \eta\epsilon_{1, i}, \label{eq:sumfgrad-InexactGluon}
	\end{eqnarray}
	where $\Delta^0 \eqdef f(X^0) - \inf_{X \in {\cal S}} f(X)$. 
	
	We introduce the following notation: $\mu_i^k \eqdef M_i^k - \nabla_i f(X^k)$, $\gamma_i^k \eqdef \nabla_i f_{\xi^k} (X^k) - \nabla_i f(X^k)$, $\alpha \eqdef 1- \beta$, and $S_i^k \eqdef \nabla_i f(X^{k-1}) - \nabla_i f(X^k)$. Then we have 
	\begin{eqnarray*}
		\mu_i^k &=& M_i^k - \nabla_i f(X^k) \\ 
		&=& \alpha \gamma_i^k + \beta S_i^k + \beta \mu_i^{k-1} \\ 
		&=& \beta^k \mu_i^0 + \sum_{\tau =1}^k \beta^{k-\tau} \alpha \gamma_i^\tau + \sum_{\tau=1}^k \beta^{k+1-\tau} S_i^\tau. 
	\end{eqnarray*}
	
	Hence we can obtain 
	\begin{eqnarray*}
		&& \E \left[  \| M_i^k - \nabla_i f(X^k)\|_{(i)\star}  \right] \\ 
		&=& \E \left[  \|\mu_i^k\|_{(i)\star}  \right] \\ 
		&\overset{(a)}{\leq}& \beta^k \E \left[  \|\mu_i^0\|_{(i)\star}  \right] + \E \left[  \| \sum_{\tau =1}^k \beta^{k-\tau} \alpha \gamma_i^\tau\|_{(i)\star} \right] + \sum_{\tau=1}^k \beta^{k+1-\tau} \E \left[  \|S_i^\tau \|_{(i)\star} \right] \\ 
		&\overset{(b)}{\leq}& \beta^k \rho \E \left[  \|\mu_i^0\|_{\rm F}  \right] + \rho \E \left[  \|\sum_{\tau=1}^k \beta^{k-\tau} \alpha \gamma_i^\tau \|_{\rm F} \right] + \sum_{\tau=1}^k \beta^{k+1-\tau} \left(  L_i^0 + L_i^1 \E \left[  \|\nabla_i f(X^\tau)\|_{(i)\star}  \right]  \right) t_i \eta (1+\delta_{\tau-1, i}) \\ 
		&\overset{(c)}{\leq}& \beta^k \rho \sqrt{\E \left[  \|\mu_i^0\|_{\rm F}^2  \right]} + \rho \sqrt{\E \left[  \sum_{\tau=1}^k\| \beta^{k-\tau} \alpha \gamma_i^\tau \|_{\rm F}^2 \right]} + \frac{(1+\delta)L_i^0t_i\eta}{\alpha} \\ 
		&& + (1+\delta)L_i^1 t_i \eta \sum_{\tau=1}^k \beta^{k+1-\tau} \E \left[  \|\nabla_i f(X^\tau)\|_{(i)\star}  \right] \\ 
		&\overset{(d)}{\leq}& \beta^k \rho \sigma + \alpha \rho \sigma \sqrt{\sum_{\tau=1}^k \beta^{2k-2\tau}} +  \frac{(1+\delta)L_i^0t_i\eta}{\alpha} + (1+\delta) L_i^1 t_i \eta \sum_{\tau=1}^k \beta^{k+1-\tau} \E \left[  \|\nabla_i f(X^\tau)\|_{(i)\star}  \right] \\ 
		&{\leq}& (1-\alpha)^k \rho\sigma + \sqrt{\alpha}\rho\sigma + \frac{(1+\delta)L_i^0t_i\eta}{\alpha} +  (1+\delta) L_i^1 t_i \eta \sum_{\tau=1}^k \beta^{k+1-\tau} \E \left[  \|\nabla_i f(X^\tau)\|_{(i)\star}  \right], 
	\end{eqnarray*}
	where (a) uses the triangle inequality, (b) uses Assumptions \ref{as:L0L1smooth}, \ref{as:rho}, and (\ref{eq:diffXik-bd}), (c) uses Jensen’s inequality, the fact that samples $\xi^k \sim {\cal D}$ are i.i.d, and the unbiasedness property in Assumption \ref{as:boundedvariance}, (d) uses Assumption \ref{as:boundedvariance}. 
	
	Combining the above inequality, (\ref{eq:sumfgrad-InexactGluon}), and the fact that $\delta_{k, i} \leq \delta$ gives that 
	\begin{eqnarray*}
		&& \sum_{i=1}^p \sum_{k=0}^{K-1} t_i \eta \E \left[ \| \nabla_i f(X^k)\|_{(i)\star}  \right] (1-\delta) \\ 
		&\leq& \Delta^0 + \sum_{i=1}^p   \left[   \sum_{k=0}^{K-1} 2(1-\alpha)^kt_i\eta\rho \sigma + \sum_{k=0}^{K-1} 2\sqrt{\alpha} t_i \eta \rho \sigma + \sum_{k=0}^{K-1}  \frac{2(1+\delta)L_i^0 t_i^2 \eta^2}{\alpha}   \right. \\ 
		&& \quad \left. + \sum_{k=0}^{K-1} 2(1+\delta)L_i^1 t_i^2 \eta^2 \sum_{\tau=1}^k \beta^{k+1-\tau} \E \left[  \|\nabla_i f(X^\tau) \|_{(i)\star}  \right] \right.  \\ 
		&& \quad  \left. + \sum_{k=0}^{K-1} \frac{(1+\delta)^2 L_i^0 t_i^2 \eta^2}{2} + \sum_{k=0}^{K-1}  \frac{(1+\delta)^2 L_i^1 t_i^2 \eta^2}{2} \E \left[  \|\nabla_i f(X^k)\|_{(i)\star}  \right] \right] + \sum_{k=0}^{K-1} \sum_{i\in \I^k} \phi t_i \eta\epsilon_{1, i}. 
	\end{eqnarray*}
	Noticed that 
	$$
	\sum_{k=0}^{K-1} \sum_{\tau=1}^k \beta^{k+1-\tau} \E \left[  \|\nabla_i f(X^\tau) \|_{(i)\star}  \right]  = \sum_{\tau=1}^{K-1} \sum_{k=\tau}^{K-1} \beta^{k+1-\tau}  \E \left[  \|\nabla_i f(X^\tau) \|_{(i)\star}  \right]  \leq \frac{1}{\alpha} \sum_{k=0}^{K-1}  \E \left[  \|\nabla_i f(X^\tau) \|_{(i)\star}  \right]. 
	$$
	Then we have 
	\begin{eqnarray*}
		&& \sum_{i=1}^p \sum_{k=0}^{K-1} t_i \eta \E \left[ \| \nabla_i f(X^k)\|_{(i)\star}  \right] (1-\delta) \\ 
		&\leq& \Delta^0 + \sum_{i=1}^p  \left[   \frac{2t_i \eta \rho \sigma}{\alpha} + 2K\sqrt{\alpha} t_i \eta \rho \sigma + \frac{2(1+\delta)KL_i^0 t_i^2 \eta^2}{\alpha}  + \frac{(1+\delta)^2 K L_i^0 t_i^2 \eta^2}{2}   \right. \\ 
		&& \quad \left.  +  \sum_{k=0}^{K-1} \left(  \frac{2(1+\delta)}{\alpha} + \frac{(1+\delta)^2}{2}  \right) L_i^1 t_i^2 \eta^2 \E \left[  \|\nabla_i f(X^k) \|_{(i)\star} \right]   \right]   + \sum_{k=0}^{K-1} \sum_{i\in \I^k} \phi t_i \eta\epsilon_{1, i}. 
	\end{eqnarray*}
	
	Now we consider two options: (1) $L_i^1 = 0$ for all $i \in \{  1, ..., p  \}$ and (2) $L_i^1 \neq 0$, for all $i \in \{  1, ..., p  \}$.  
	
	{\bf Case 1:} $L_i^1 = 0$ for all $i \in \{  1, ..., p  \}$. In this case, 
	\begin{eqnarray*}
		&& \min_{k=0, ..., K-1} \sum_{i=1}^p t_i \E \left[  \|\nabla_i f(X^k) \|_{(i)\star} \right] \\ 
		&\leq&  \frac{1}{K} \sum_{k=0}^{K-1} \sum_{i=1}^p t_i \E \left[  \|\nabla_i f(X^k) \|_{(i)\star} \right] \\ 
		&\leq&  \frac{1}{1-\delta} \left[ \frac{\Delta^0}{\eta K} + \frac{2\sum_{i=1}^p t_i \rho \sigma}{\alpha K} + 2\sqrt{\alpha} \sum_{i=1}^p t_i \rho \sigma + \frac{2(1+\delta)\sum_{i=1}^p L_i^0t_i^2 \eta}{\alpha} \right. \\ 
		&& \left. \hskip 14mm  + \frac{(1+\delta)^2\sum_{i=1}^p L_i^0 t_i^2 \eta}{2}  + \sum_{i=1}^p \phi t_i\epsilon_{1, i}  \right]. 
	\end{eqnarray*}
	
	{\bf Case 2:} $L_i^1 \neq 0$, for all $i \in \{  1, ..., p  \}$. First we let $\frac{\eta}{\alpha} \leq \min_{i}  \frac{1}{6(1+\delta)L_i^1 t_i}$. Then $\left(  \frac{2(1+\delta)}{\alpha} + \frac{(1+\delta)^2}{2}  \right) L_i^1 t_i \eta \leq \frac{1}{2}$ for all $i$, and 
	\begin{eqnarray*}
		&& \min_{k=0, ..., K-1} \sum_{i=1}^p t_i \E \left[  \|\nabla_i f(X^k) \|_{(i)\star} \right] \\ 
		&\leq&  \frac{1}{K} \sum_{k=0}^{K-1} \sum_{i=1}^p t_i \E \left[  \|\nabla_i f(X^k) \|_{(i)\star} \right] \\ 
		&\leq&  \frac{1}{1-\delta} \left[ \frac{2\Delta^0}{\eta K} + \frac{4\sum_{i=1}^p t_i \rho \sigma}{\alpha K} + 4\sqrt{\alpha} \sum_{i=1}^p t_i \rho \sigma + \frac{4(1+\delta)\sum_{i=1}^p L_i^0t_i^2 \eta}{\alpha} \right. \\ 
		&& \left. \hskip 14mm  + {(1+\delta)^2\sum_{i=1}^p L_i^0 t_i^2 \eta}  + \sum_{i=1}^p 2 \phi t_i\epsilon_{1, i}  \right]. 
	\end{eqnarray*}

	\newpage 
	
	\section{Proofs for inexact Gluon in the deterministic case}

	\subsection{Proof of Theorem \ref{th:gluon-deter}}

	First, from the first condition in Assumption \ref{as:inexactLMO-layer-deter} or Assumption \ref{as:inexactLMO-layer-2-deter}, we can obtain an upper bound for $\|X_i^{k+1} - X_i^k\|_{(i)}$, that is 
	\begin{equation}\label{eq:diffXik-bd-deter}
		\|X_i^{k+1} - X_i^k\|_{(i)} = t_i^k \|{\hat D}_i^k\|_{(i)} \leq (1+\delta_{k, i}) t_i^k. 
	\end{equation}
	
	Under Assumption \ref{as:L0L1smooth}, from the Lemma 1 in \citep{riabinin2025gluon}, we can obtain 
	\begin{equation}\label{eq:fXk+1<-deter}
		f(X^{k+1})  \leq  f(X^k)  +  \sum_{i=1}^p \left[  \langle \nabla_i f(X^k), X_i^{k+1} - X_i^k \rangle +  \frac{L_i^0 + L_i^1 \|\nabla_i f(X^k) \|_{(i)\star}}{2} \|X_i^{k+1} - X_i^k\|_{(i)}^2  \right]. 
	\end{equation}
	
	For $i$ such that (\ref{eq:hatdk-obj-layer-deter}) holds, i.e., $i \notin \I_D^k$, we have 
	\begin{eqnarray*}
		&& \langle \nabla_i f(X^k), X_i^{k+1} - X_i^k \rangle +  \frac{L_i^0 + L_i^1 \|\nabla_i f(X^k) \|_{(i)\star}}{2} \|X_i^{k+1} - X_i^k\|_{(i)}^2 \\ 
		&\overset{(\ref{eq:diffXik-bd-deter})}{\leq}& -(1-\delta_{k, i}) t_i^k \|\nabla_i f(X^k)\|_{(i)\star} + \frac{L_i^0 + L_i^1 \|\nabla_i f(X^k) \|_{(i)\star}}{2} (1+\delta_{k, i})^2 (t_i^k)^2 \\ 
		&=& - \frac{(1-\delta)^2\|\nabla_i f(X^k)\|_{(i)\star}^2}{2(1+\delta)^2 (L_i^0 + L_i^1 \|\nabla_i f(X^k)\|_{(i)\star}) }, 
	\end{eqnarray*}
	where the last equality comes from $t_i^k =  \frac{(1-\delta)\|\nabla_i f(X^k)\|_{(i)\star}}{(1+\delta)^2 (L_i^0 + L_i^1 \|\nabla_i f(X^k)\|_{(i)\star}) }$. 
	\noindent For $i\in \I_D^k$, under Assumption \ref{as:inexactLMO-layer-deter}, we know $\|\nabla_i f(X^k)\|_{(i)\star} \leq \epsilon_{1, i}$. Then 
	\begin{eqnarray*}
		&& \langle \nabla_i f(X^k), X_i^{k+1} - X_i^k \rangle +  \frac{L_i^0 + L_i^1 \|\nabla_i f(X^k) \|_{(i)\star}}{2} \|X_i^{k+1} - X_i^k\|_{(i)}^2 \\ 
		&\leq& (1+\delta_{k, i}) t_i^k \|\nabla_i f(X^k)\|_{(i)\star} + \frac{L_i^0 + L_i^1 \|\nabla_i f(X^k) \|_{(i)\star}}{2} (1+\delta_{k, i})^2 (t_i^k)^2 \\ 
		&=& - \frac{(1-\delta)^2\|\nabla_i f(X^k)\|_{(i)\star}^2}{2(1+\delta)^2 (L_i^0 + L_i^1 \|\nabla_i f(X^k)\|_{(i)\star}) }  +  \frac{2(1-\delta)\|\nabla_i f(X^k)\|_{(i)\star}^2}{(1+\delta)^2 (L_i^0 + L_i^1 \|\nabla_i f(X^k)\|_{(i)\star}} \\ 
		&\leq& - \frac{(1-\delta)^2\|\nabla_i f(X^k)\|_{(i)\star}^2}{2(1+\delta)^2 (L_i^0 + L_i^1 \|\nabla_i f(X^k)\|_{(i)\star}) }  +  \frac{2(1-\delta)\epsilon_{1, i}^2}{(1+\delta)^2 (L_i^0 + L_i^1 \epsilon_{1, i})}, 
	\end{eqnarray*}
	where the first inequality uses the Cauchy-Schwarz inequality and (\ref{eq:diffXik-bd-deter}), and the first equality comes from the definition of $t_i^k$. For $i\in \I_D^k$, under Assumption \ref{as:inexactLMO-layer-2-deter}, we know $\|\nabla_i f(X^k)\|_{(i)\star} \leq \epsilon_{1, i}$ and $\langle \nabla_i f(X^k), {\hat D}_i^k \rangle \leq 0$. Then 
	\begin{eqnarray*}
		&& \langle \nabla_i f(X^k), X_i^{k+1} - X_i^k \rangle +  \frac{L_i^0 + L_i^1 \|\nabla_i f(X^k) \|_{(i)\star}}{2} \|X_i^{k+1} - X_i^k\|_{(i)}^2 \\ 
		&\overset{(\ref{eq:diffXik-bd-deter})}{\leq}& \frac{L_i^0 + L_i^1 \|\nabla_i f(X^k) \|_{(i)\star}}{2} (1+\delta_{k, i})^2 (t_i^k)^2 \\ 
		&=& - \frac{(1-\delta)^2\|\nabla_i f(X^k)\|_{(i)\star}^2}{2(1+\delta)^2 (L_i^0 + L_i^1 \|\nabla_i f(X^k)\|_{(i)\star}) }  +  \frac{(1-\delta)^2\|\nabla_i f(X^k)\|_{(i)\star}^2}{(1+\delta)^2 (L_i^0 + L_i^1 \|\nabla_i f(X^k)\|_{(i)\star}} \\ 
		&\leq& - \frac{(1-\delta)^2\|\nabla_i f(X^k)\|_{(i)\star}^2}{2(1+\delta)^2 (L_i^0 + L_i^1 \|\nabla_i f(X^k)\|_{(i)\star}) }  +  \frac{(1-\delta)^2\epsilon_{1, i}^2}{(1+\delta)^2 (L_i^0 + L_i^1 \epsilon_{1, i})}, 
	\end{eqnarray*}
	where the first equality comes from the definition of $t_i^k$. 
	
	Combining the above two cases, we arrive at 
	\begin{eqnarray*}
		&&\sum_{i=1}^p \left[ \langle \nabla_i f(X^k), X_i^{k+1} - X_i^k \rangle +  \frac{L_i^0 + L_i^1 \|\nabla_i f(X^k) \|_{(i)\star}}{2} \|X_i^{k+1} - X_i^k\|_{(i)}^2 \right] \\ 
		&\leq& - \sum_{i=1}^p \frac{(1-\delta)^2\|\nabla_i f(X^k)\|_{(i)\star}^2}{2(1+\delta)^2 (L_i^0 + L_i^1 \|\nabla_i f(X^k)\|_{(i)\star}) }  +  \sum_{i \in \I_D^k}\frac{\phi (1-\delta) \epsilon_{1, i}^2}{(1+\delta)^2 (L_i^0 + L_i^1 \epsilon_{1, i})}. 
	\end{eqnarray*}
	
	From the above inequality and (\ref{eq:fXk+1<-deter}), we can get 
	\begin{equation}\label{eq:fXk+1-Ik-deter}
		f(X^{k+1}) \leq  f(X^k) - \sum_{i=1}^p \frac{(1-\delta)^2\|\nabla_i f(X^k)\|_{(i)\star}^2}{2(1+\delta)^2 (L_i^0 + L_i^1 \|\nabla_i f(X^k)\|_{(i)\star}) }  +  \sum_{i \in \I_D^k}\frac{\phi (1-\delta)\epsilon_{1, i}^2}{(1+\delta)^2 (L_i^0 + L_i^1 \epsilon_{1, i})}. 
	\end{equation}
	Summing the above terms from $k=0$ to $K-1$, we can get 
	\begin{eqnarray}
		&& \sum_{k=0}^{K-1} \sum_{i=1}^p \frac{\|\nabla_i f(X^k)\|_{(i)\star}^2}{2 (L_i^0 + L_i^1 \|\nabla_i f(X^k)\|_{(i)\star}) } \nonumber \\ 
		&\leq& \frac{(1+\delta)^2 (f(X^0) - f(X^K))}{(1-\delta)^2}  +  \sum_{k=0}^{K-1}  \sum_{i \in \I_D^k}\frac{\phi \epsilon_{1, i}^2}{(1-\delta) (L_i^0 + L_i^1 \epsilon_{1, i})} \nonumber \\ 
		&\leq&  \frac{(1+\delta)^2 \Delta^0}{(1-\delta)^2} + K \sum_{i=1}^p \frac{\phi \epsilon_{1, i}^2}{(1-\delta) (L_i^0 + L_i^1 \epsilon_{1, i})}, \label{eq:sumnablaXk2-deter}
	\end{eqnarray}
	where $\Delta^0 \eqdef f(X^0) - \inf_{X \in {\cal S}} f(X)$. 
	
	Then same as the proof of Theorem 3 in \citep{riabinin2025gluon}, we estimate the lower bound on the left-hand side of (\ref{eq:sumnablaXk2-deter}) in two ways. \\
	
	\noindent 1. Denote $L_{\max}^1 \eqdef \max_{i=1, ..., p} L_i^1$. Then we have 
	$$
	\psi_1 \left(  \sum_{i=1}^p \|\nabla_i f(X^k) \|_{(i)\star}  \right) \leq \sum_{i=1}^p  \frac{\|\nabla_i f(X^k)\|_{(i)\star}^2}{2 (L_i^0 + L_{\max}^1 \|\nabla_i f(X^k)\|_{(i)\star}) }, 
	$$
	where $\psi_1(t) \eqdef \frac{t^2}{2(\sum_{i=1}^p L_i^0 + L_{\max}^1 t)}$. Combining (\ref{eq:sumnablaXk2-deter}) and using the fact that $\psi_1$ is increasing, we can get 
	\begin{eqnarray*}
		K\psi_1 \left(  \min_{k=0, ..., K-1} \sum_{i=1}^p \|\nabla_i f(X^k) \|_{(i)\star}   \right) &\leq& \sum_{k=0}^{K-1} \psi_1 \left(  \sum_{i=1}^p \|\nabla_i f(X^k) \|_{(i)\star}  \right) \\ 
		&\leq& \frac{(1+\delta)^2 \Delta^0}{(1-\delta)^2} + K \sum_{i=1}^p \frac{\phi \epsilon_{1, i}^2}{(1-\delta) (L_i^0 + L_i^1 \epsilon_{1, i})}, 
	\end{eqnarray*}
	which implies that 
	$$
	\min_{k=0, ..., K-1} \sum_{i=1}^p \|\nabla_i f(X^k) \|_{(i)\star}  \leq \psi_1^{-1} \left(   \frac{(1+\delta)^2 \Delta^0}{(1-\delta)^2K} +   \sum_{i=1}^p \frac{\phi \epsilon_{1, i}^2}{(1-\delta) (L_i^0 + L_i^1 \epsilon_{1, i})} \right), 
	$$
	where $\psi_1^{-1}$ is the inverse function of $\psi_1$, which exists since $\psi_1$ is an increasing function. Thus, to reach the precision $ \min_{k=0, ..., K-1} \sum_{i=1}^p \|\nabla_i f(X^k) \|_{(i)\star} \leq \epsilon$, it is sufficient to let 
	$$
	\frac{(1+\delta)^2 \Delta^0}{(1-\delta)^2K} \leq (1-{\tilde c})\psi_1(\epsilon), \quad \text{and} \quad \sum_{i=1}^p \frac{\phi \epsilon_{1, i}^2}{(1-\delta) (L_i^0 + L_i^1 \epsilon_{1, i})} \leq {\tilde c} \psi_1(\epsilon), 
	$$
	for any $0<\tilde c<1$. Hence, it is sufficient to choose 
	$$
	K = \left\lceil  \frac{2(1+\delta)^2 \sum_{i=1}^p L_i^0 \Delta^0}{(1-\delta)^2(1-\tilde c)\epsilon^2}  +  \frac{2(1+\delta)^2 L_{\max}^1 \Delta^0}{(1-\delta)^2 (1-\tilde c)\epsilon}  \right\rceil, 
	$$
	and $\epsilon_{1, i}$ such that $\sum_{i=1}^p \frac{\phi \epsilon_{1, i}^2}{(1-\delta) (L_i^0 + L_i^1 \epsilon_{1, i})} \leq \frac{{\tilde c}\epsilon^2}{2(\sum_{i=1}^p L_i^0 + L_{\max}^1 \epsilon)}$. \\ 
	
	\noindent 2. Alternatively, we have 
	$$
	\psi_2 \left(  \sum_{i=1}^p \frac{1}{L_i^1} \|\nabla_i f(X^k) \|_{(i)\star}  \right) \leq \sum_{i=1}^p  \frac{\|\nabla_i f(X^k)\|_{(i)\star}^2}{2 (L_i^0 + L_i^1 \|\nabla_i f(X^k)\|_{(i)\star}) }, 
	$$
	where $\psi_2(t) \eqdef \frac{t^2}{2\left(  \sum_{i=1}^p \frac{L_i^0}{(L_i^1)^2 } + t \right)}$. Combining (\ref{eq:sumnablaXk2-deter}) and using the fact that $\psi_2$ is increasing, we obtain 
	\begin{eqnarray*}
		K\psi_2 \left(  \min_{k=0, ..., K-1} \sum_{i=1}^p \frac{1}{L_i^1}\|\nabla_i f(X^k) \|_{(i)\star}   \right) &\leq& \sum_{k=0}^{K-1} \psi_2 \left(  \sum_{i=1}^p \frac{1}{L_i^1}\|\nabla_i f(X^k) \|_{(i)\star}  \right) \\ 
		&\leq& \frac{(1+\delta)^2 \Delta^0}{(1-\delta)^2} + K \sum_{i=1}^p \frac{\phi \epsilon_{1, i}^2}{(1-\delta) (L_i^0 + L_i^1 \epsilon_{1, i})}, 
	\end{eqnarray*}
	which indicates 
	$$
	\min_{k=0, ..., K-1} \sum_{i=1}^p \frac{1}{L_i^1}\|\nabla_i f(X^k) \|_{(i)\star}  \leq \psi_2^{-1} \left(   \frac{(1+\delta)^2 \Delta^0}{(1-\delta)^2K} +   \sum_{i=1}^p \frac{\phi \epsilon_{1, i}^2}{(1-\delta) (L_i^0 + L_i^1 \epsilon_{1, i})} \right), 
	$$
	where $\psi_2^{-1}$ is the inverse function of $\psi_2$. Therefore, to reach the precision 
	$$
	\min_{k=0, ..., K-1} \sum_{i=1}^p \frac{1/L_i^1}{\frac{1}{p} \sum_{i=1}^p1/L_i^1}\|\nabla_i f(X^k) \|_{(i)\star} \leq \epsilon, 
	$$
	it is sufficient to choose 
	$$
	K = \left\lceil   \frac{2(1+\delta)^2\Delta^0 (\sum_{i=1}^p \frac{L_i^0}{(L_i^1)^2})}{(1-\delta)^2(1-\tilde c)\epsilon^2 \left(  \frac{1}{p} \sum_{j=1}^p \frac{1}{L_j^1} \right)^2}   +  \frac{2(1+\delta)^2\Delta^0}{(1-\delta)^2(1-\tilde c) \epsilon \left(  \frac{1}{p} \sum_{j=1}^p \frac{1}{L_j^1} \right)} \right\rceil, 
	$$
	and $\epsilon_{1, i}$ such that $$\sum_{i=1}^p \frac{\phi \epsilon_{1, i}^2}{(1-\delta) (L_i^0 + L_i^1 \epsilon_{1, i})} \leq \frac{{\tilde c} \left(\frac{\epsilon}{p} \sum_{j=1}^p \frac{1}{L_j^1}\right)^2}{2\left(  \sum_{i=1}^p \frac{L_i^0}{(L_i^1)^2 } + \frac{\epsilon}{p} \sum_{j=1}^p \frac{1}{L_j^1} \right)}.$$

	\newpage
	
	\subsection{Proof of Theorem \ref{th:gluon-deter-pl}}

	1. For the $p=1$ case, when $\epsilon_{1, i} = \epsilon_{1,1} = \sqrt{2\mu \epsilon}$, if there exists some ${\tilde k} \in \{0, ..., K-1\}$ such that $\| \nabla f(X^{\tilde k})\|_{\star} \leq \epsilon_{1, 1}$, then $f(X^{\tilde k}) - f^\star \leq \epsilon$ from Assumption \ref{as:PL}, and the result follows. Otherwise, for all $k\in \{0, ..., K-1\}$, we have $\| \nabla f(X^k)\|_{\star} > \sqrt{2\mu \epsilon}$. Then under Assumption \ref{as:inexactLMO-layer-deter} or Assumption \ref{as:inexactLMO-layer-2-deter}, the inequality (\ref{eq:hatdk-obj-layer-deter}) holds and $\I_D^k = \emptyset$ for all $k$. Hence, from (\ref{eq:fXk+1-Ik-deter}), we obtain the descent inequality
	$$
	f(X^{k+1}) \leq  f(X^k) - \frac{(1-\delta)^2\|\nabla f(X^k)\|_{\star}^2}{2(1+\delta)^2 (L_1^0 + L_1^1 \|\nabla f(X^k)\|_{\star}) }. 
	$$
	Since the function $\frac{t^2}{a+bt}$ is increasing on $t\geq 0$ for any $a, b\geq 0$ with $a+b>0$, from Assumption \ref{as:PL}, we can get 
	\begin{eqnarray*}
		f(X^{k+1}) &\leq& f(X^k)  -  \frac{2\mu (1-\delta)^2(f(X^k) - f^\star) }{2(1+\delta)^2 (L_1^0 + \sqrt{2\mu (f(X^k) - f^\star)}L_i^1)} \\ 
		&\leq& f(X^k) - \frac{\mu (1-\delta)^2(f(X^k) - f^\star) }{(1+\delta)^2 (L_1^0 + \sqrt{2\mu \Delta^0}L_1^1)},
	\end{eqnarray*}
	where the last inequality comes from $f(X^k) - f^\star \leq f(X^0) - f^\star = \Delta^0$. Subtracting $f^\star$ from both sides of above inequality, we arrive at 
	$$
	f(X^{k+1}) - f^\star \leq \left(  1 -  \frac{\mu (1-\delta)^2 }{(1+\delta)^2 (L_1^0 + \sqrt{2\mu \Delta^0}L_1^1)}  \right) (f(X^k) - f^\star). 
	$$
	Therefore, $f(X^K) - f^\star \leq \epsilon$ for $K =  \lceil  \frac{(1+\delta)^2 (L_1^0 + \sqrt{2\mu \Delta^0}L_1^1)}{(1-\delta)^2 \mu} \ln \frac{\Delta^0}{\epsilon} \rceil$. \\

	\noindent 2. For any $k$, since $\sum_{i=1}^{p} \|\nabla_i f(X)\|_{(i)\star}^2 \geq 2\mu \left(f(X) - f^\star\right)$ from Assumption \ref{as:PL}, we know there exists $\{a_i\}$ such that $a_i\geq 0$, $\sum_{i=1}^p a_i =1$, and $\|\nabla_i f(X^k)\|_{(i)\star}^2 \geq 2a_i \mu \left(f(X^k) - f^\star\right)$. In fact, we can choose $a_i = \|\nabla_i f(X^k)\|_{(i)\star}^2/\sum_{j=1}^{p} \|\nabla_j f(X^k)\|_{(j)\star}^2$. Then, since the function $\frac{t^2}{a+bt}$ is increasing on $t\geq 0$ for any $a, b\geq 0$ with $a+b>0$, we can get 
	\begin{eqnarray}
		&&\sum_{i=1}^p \frac{(1-\delta)^2\|\nabla_i f(X^k)\|_{(i)\star}^2}{2(1+\delta)^2 (L_i^0 + L_i^1 \|\nabla_i f(X^k)\|_{(i)\star}) } \nonumber \\ 
		&\geq& \sum_{i=1}^p \frac{2(1-\delta)^2a_i \mu (f(X^k) - f^\star) }{2(1+\delta)^2 (L_i^0 + L_i^1\sqrt{2a_i \mu (f(X^k) - f^\star)}) } \nonumber \\
		&\geq&  \sum_{i=1}^p \frac{(1-\delta)^2a_i \mu (f(X^k) - f^\star) }{(1+\delta)^2 (L_i^0 + L_i^1\sqrt{2 \mu (f(X^k) - f^\star)}) }. \label{eq:sumLi0+Li1nabla-deter-pl}
	\end{eqnarray}
	
	Denote $\theta \eqdef \frac{(1-\delta)^2\mu}{(1+\delta)^2(L_{\max}^0 + L_{\max}^1 \sqrt{2\mu \max\{ \Delta^0, \epsilon\}})}$. Next, we will show $f(X^k) - f^\star \leq \max\{\Delta^0, \epsilon\}$ for all $k\in \{0, ..., K-1\}$ by induction. For $k=0$, it holds trivially. Assume it holds for $k$. Then from (\ref{eq:fXk+1-Ik-deter}) and the assumption that $\frac{1}{\theta} \sum_{i=1}^p \frac{\phi (1-\delta)\epsilon_{1, i}^2}{(1+\delta)^2 (L_i^0 + L_i^1 \epsilon_{1, i})} \leq {\tilde c}\epsilon$, we have

	\begin{eqnarray*}
		f(X^{k+1}) &\leq&  f(X^k) - \sum_{i=1}^p \frac{(1-\delta)^2\|\nabla_i f(X^k)\|_{(i)\star}^2}{2(1+\delta)^2 (L_i^0 + L_i^1 \|\nabla_i f(X^k)\|_{(i)\star}) }  +  \sum_{i \in \I_D^k} \frac{\phi (1-\delta)\epsilon_{1, i}^2}{(1+\delta)^2 (L_i^0 + L_i^1 \epsilon_{1, i})} \\ 
		&\overset{(\ref{eq:sumLi0+Li1nabla-deter-pl})}{\leq}& f(X^k) -  \sum_{i=1}^p \frac{(1-\delta)^2a_i \mu (f(X^k) - f^\star) }{(1+\delta)^2 (L_i^0 + L_i^1\sqrt{2 \mu (f(X^k) - f^\star)}) }  +  \sum_{i=1}^p \frac{\phi (1-\delta)\epsilon_{1, i}^2}{(1+\delta)^2 (L_i^0 + L_i^1 \epsilon_{1, i})} \\ 
		&\leq& f(X^k)  -  \sum_{i=1}^p \theta a_i (f(X^k) - f^\star)  +  \theta {\tilde c}\epsilon, 
	\end{eqnarray*}
	which implies that 
	$$
	f(X^{k+1}) - f^\star \leq (1-\theta) (f(X^k) - f^\star) +  \theta {\tilde c}\epsilon \leq \max\{  \Delta^0, \epsilon  \}. 
	$$
	Therefore, $f(X^k) - f^\star \leq \max\{ \Delta^0, \epsilon\}$ for all $k \in \{ 0, ..., K-1\}$. From the above analysis, we also have 
	\begin{eqnarray*}
		f(X^K) - f^\star &\leq& (1-\theta) (f(X^{K-1}) - f^\star) + \theta {\tilde c}\epsilon \\ 
		&\leq& (1-\theta)^K (f(X^0) - f^\star) + \sum_{k=0}^{K-1} (1-\theta)^k \theta {\tilde c}\epsilon\\ 
		&\leq&  (1-\theta)^K (f(X^0) - f^\star)  +  {\tilde c}\epsilon. 
	\end{eqnarray*}
	Thus, to reach the precision $f(X^K) - f^\star \leq \epsilon$, it is sufficient to choose 
	$$
	K = \left\lceil  \frac{(1+\delta)^2(L_{\max}^0 + L_{\max}^1 \sqrt{2\mu \max\{ \Delta^0, \epsilon\}})}{(1-\delta)^2\mu} \ln \frac{\Delta^0}{(1-\tilde c)\epsilon}   \right\rceil. 
	$$

	\newpage 
	
	\section{Proofs for inexact Gluon with weight decay for the star-convex case}

	\subsection{A technical lemma}

	\begin{lemma}\label{lm:Xkbound-sc}
		For the iterates in Algorithm \ref{alg:gluon-sc}, define 
		\begin{equation}\label{eq:X-sc}
			X = \beta X^* + (1-\beta) X^k. 
		\end{equation}
		Let Assumption \ref{as:inexactLMO-layer-deter} or Assumption \ref{as:inexactLMO-layer-2-deter} hold for Option 1, and Assumption \ref{as:inexactLMO-layer} or Assumption \ref{as:inexactLMO-layer-2} hold for Option 2, with $\delta_{k, i} = \delta_i<1$, and 
		\begin{equation}\label{eq:etabeta-sc}
			\beta \max_{i\in \{1, ..., p\}} \left\{  \frac{\|X_i^0\|_{(i)}}{(1+\delta_i) t_i},  \frac{\|X_i^*\|_{(i)}}{(1-s)(1-\delta_i)t_i}  \right\} \leq \eta, 
		\end{equation}
		where $s\in[0, 1)$ is a parameter. Define the constant $\phi_i=2$ if Assumption \ref{as:inexactLMO-layer} or  \ref{as:inexactLMO-layer-deter} holds, and $\phi_i = 1-\delta_i$ if Assumption \ref{as:inexactLMO-layer-2} or \ref{as:inexactLMO-layer-2-deter} holds for all $i$.
		Then the following inequalities hold for all $i$:
		\begin{eqnarray}
			&&\|X_i - (1-\beta) X_i^k\|_{(i)} \leq (1-\delta_i) t_i\eta,  \   \|X_i-X_i^k\|_{(i)} \leq 2 t_i\eta, \label{eq:Xkbound-sc-1} \\
			&&\|X_i-X_i^{k+1}\|_{(i)}\leq 2 t_i\eta,  \  \|X_i^{k+1} - X_i^k\|_{(i)} \leq 2(1+\delta_i) t_i\eta, \label{eq:Xkbound-sc-2} 
		\end{eqnarray} 
		and 
		\begin{equation}\label{eq:innerdiff-f-sc}
			\langle M^k, X^{k+1} - X \rangle  \leq  -\sum_{i=1}^p s t_i \eta (1-\delta_i) \|M_i^k\|_{(i)\star} +  \sum_{i=1}^p \phi_i t_i \eta \epsilon_{1, i}. 
		\end{equation}
	\end{lemma}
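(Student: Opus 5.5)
The plan is to first establish a uniform bound on the iterates, $\|X_i^k\|_{(i)} \leq \frac{(1+\delta_i)t_i\eta}{\beta}$ for all $k\geq 0$ and all $i$, by induction on $k$. The base case is exactly the first part of (\ref{eq:etabeta-sc}): $\beta\|X_i^0\|_{(i)} \leq (1+\delta_i)t_i\eta$. For the inductive step, the weight-decay update $X_i^{k+1} = (1-\beta)X_i^k + t_i\eta {\hat D}_i^k$, the triangle inequality, and condition i) $\|{\hat D}_i^k\|_{(i)}\leq 1+\delta_i$ (common to all four LMO assumptions) give
$$\|X_i^{k+1}\|_{(i)} \leq (1-\beta)\frac{(1+\delta_i)t_i\eta}{\beta} + (1+\delta_i)t_i\eta = \frac{(1+\delta_i)t_i\eta}{\beta}.$$
I expect this invariant to be the only genuinely structural step; everything else is bookkeeping with the triangle inequality. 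The weight decay is what keeps the ball of radius $(1+\delta_i)t_i\eta/\beta$ invariant.

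Next I will derive (\ref{eq:Xkbound-sc-1}) and (\ref{eq:Xkbound-sc-2}) from the definition (\ref{eq:X-sc}). Since $X_i - (1-\beta)X_i^k = \beta X_i^*$, the second part of (\ref{eq:etabeta-sc}) gives $\|\beta X_i^*\|_{(i)} \leq (1-s)(1-\delta_i)t_i\eta \leq (1-\delta_i)t_i\eta$. The remaining three bounds follow from the triangle inequality together with the invariant:
\begin{itemize}
\item $X_i - X_i^k = \beta X_i^* - \beta X_i^k$, so its norm is at most $(1-\delta_i)t_i\eta + (1+\delta_i)t_i\eta = 2t_i\eta$;
\item $X_i - X_i^{k+1} = \beta X_i^* - t_i\eta{\hat D}_i^k$, so its norm is at most $(1-\delta_i)t_i\eta + (1+\delta_i)t_i\eta = 2t_i\eta$;
\item $X_i^{k+1} - X_i^k = -\beta X_i^k + t_i\eta {\hat D}_i^k$, so its norm is at most $2(1+\delta_i)t_i\eta$.
\end{itemize}

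For (\ref{eq:innerdiff-f-sc}), I will work layer by layer with $X_i^{k+1} - X_i = t_i\eta{\hat D}_i^k - \beta X_i^*$. By duality,
$$-\langle M_i^k, \beta X_i^*\rangle \leq (1-s)(1-\delta_i)t_i\eta\|M_i^k\|_{(i)\star}.$$
I then split into the same cases as in the proof of Theorem \ref{th:Inexact-gluon}.
\begin{itemize}
\item \emph{Case $i\notin\I^k$ (resp.\ $\I_D^k$).} Here $\langle M_i^k, t_i\eta{\hat D}_i^k\rangle \leq -(1-\delta_i)t_i\eta\|M_i^k\|_{(i)\star}$. Adding the duality bound gives exactly $-s(1-\delta_i)t_i\eta\|M_i^k\|_{(i)\star}$.
\item \emph{Case $i\in\I^k$ under Assumption \ref{as:inexactLMO-layer} (or \ref{as:inexactLMO-layer-deter}).} Duality gives $\langle M_i^k, t_i\eta{\hat D}_i^k\rangle \leq (1+\delta_i)t_i\eta\|M_i^k\|_{(i)\star}$. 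The total is $(1+\delta_i)t_i\eta\|M_i^k\|_{(i)\star} + (1-s)(1-\delta_i)t_i\eta\|M_i^k\|_{(i)\star}$. Rewriting, this is $-s(1-\delta_i)t_i\eta\|M_i^k\|_{(i)\star} + 2t_i\eta\|M_i^k\|_{(i)\star}$, and $\|M_i^k\|_{(i)\star}\leq\epsilon_{1,i}$ bounds the extra term by $2t_i\eta\epsilon_{1,i}$ ($\phi_i=2$).
\item \emph{Case $i\in\I^k$ under Assumption \ref{as:inexactLMO-layer-2} (or \ref{as:inexactLMO-layer-2-deter}).} Using $\langle M_i^k,{\hat D}_i^k\rangle\leq 0$, the total is at most $(1-s)(1-\delta_i)t_i\eta\|M_i^k\|_{(i)\star}$. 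This equals $-s(1-\delta_i)t_i\eta\|M_i^k\|_{(i)\star} + (1-\delta_i)t_i\eta\|M_i^k\|_{(i)\star}$, and the extra term is at most $(1-\delta_i)t_i\eta\epsilon_{1,i}$ ($\phi_i = 1-\delta_i$).
\end{itemize}
Summing over $i$, and bounding $\sum_{i\in\I^k}$ by $\sum_{i=1}^p$ since all terms are nonnegative, yields (\ref{eq:innerdiff-f-sc}). For Option 1 the same argument applies with $M^k=\nabla f(X^k)$.
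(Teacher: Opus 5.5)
Your proposal is correct and follows essentially the same route as the paper: the same induction $\beta\|X_i^k\|_{(i)}\le(1+\delta_i)t_i\eta$, the same triangle-inequality bookkeeping for the four norm bounds, and the same duality bound $-\langle M_i^k,\beta X_i^*\rangle\le(1-s)(1-\delta_i)t_i\eta\|M_i^k\|_{(i)\star}$ combined with the case split on the LMO condition. The only difference is cosmetic: you fold the $(1-s)(1-\delta_i)$ term into each case, whereas the paper first bounds $\sum_i t_i\eta\langle M_i^k,\hat D_i^k\rangle$ by $-\sum_i t_i\eta(1-\delta_i)\|M_i^k\|_{(i)\star}+\sum_i\phi_i t_i\eta\epsilon_{1,i}$ and then adds it.
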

	
	\begin{proof}
		
		First, we show the following inequality by induction
		\begin{equation}\label{eq:betaXk-sc}
			\beta \|X_i^k\|_{(i)} \leq (1+\delta_i) t_i\eta. 
		\end{equation}
		It is trivial that $\beta \|X_i^0\|_{(i)} \leq (1+\delta_i) t_i\eta$ from (\ref{eq:etabeta-sc}). Assume $\beta \|X_i^k\|_{(i)} \leq (1+\delta_i) t_i\eta$. Then 
		\begin{eqnarray*}
			\beta \|X_i^{k+1}\|_{(i)} &\leq& \beta \|X_i^{k+1} - (1-\beta)X_i^k\|_{(i)} + (1-\beta) \beta \|X_i^k\|_{(i)}  \\ 
			&\leq& \beta \| t_i\eta {\hat D}_i^k\|_{(i)}   +  (1-\beta)(1+\delta_i) t_i \eta \\ 
			&\leq& (1+\delta_i)\beta t_i \eta  +  (1-\beta)(1+\delta_i) t_i \eta \\ 
			&=& (1+\delta_i)t_i \eta,
		\end{eqnarray*}
		where the first inequality uses the triangle inequality, the second inequality comes from the update rule of $X_i^{k+1}$ and the induction hypothesis, and the last inequality uses Assumption \ref{as:inexactLMO-layer-deter} or \ref{as:inexactLMO-layer-2-deter} for Option 1, and Assumption \ref{as:inexactLMO-layer} or \ref{as:inexactLMO-layer-2} for Option 2. 
		
		Hence, we can obtain 
		\begin{equation}\label{eq:X-1-betaXk-sc}
			\|X_i-(1-\beta) X_i^k\|_{(i)}  \overset{(\ref{eq:X-sc})}{=} \beta\|X^*\|_{(i)}  \overset{(\ref{eq:etabeta-sc})}{\leq} (1-s)(1-\delta_i) t_i \eta \leq (1-\delta_i) t_i \eta, 
		\end{equation}
		and 
		$$
		\|X_i - X_i^k\|_{(i)}  \overset{(\ref{eq:X-sc})}{=}  \beta \|X_i^* - X_i^k\|_{(i)}  \leq \beta \|X_i^*\|_{(i)}  + \beta \|X_i^k\|_{(i)}  \leq (1-\delta_i)t_i \eta + (1+\delta_i)t_i \eta = 2t_i \eta, 
		$$
		where the last inequality uses (\ref{eq:etabeta-sc}) and the previously obtained inequality (\ref{eq:betaXk-sc}). Then, 
		\begin{eqnarray*}
			\|X_i - X_i^{k+1} \|_{(i)}  &\leq& \|X_i-(1-\beta)X_i^k\|_{(i)}  + \|X_i^{k+1} - (1-\beta) X_i^k\|_{(i)}  \\ 
			&\overset{(\ref{eq:X-1-betaXk-sc})}{\leq}& (1-\delta_i) t_i \eta +  \|X_i^{k+1} - (1-\beta) X_i^k\|_{(i)}  \\ 
			&=&   (1-\delta_i) t_i \eta +  t_i \eta \|{\hat D}_i^k\|_{(i)}  \\ 
			&\leq&  (1-\delta_i)t_i \eta + (1+\delta_i) t_i \eta \\
			&=& 2t_i \eta, 
		\end{eqnarray*}
		and 
		\begin{eqnarray*}
			\|X_i^{k+1} - X_i^k\|_{(i)}  &=& \|X_i^{k+1} - (1-\beta)X_i^k - \beta X_i^k\|_{(i)}  \\
			&\leq&  \|X_i^{k+1} - (1-\beta)X_i^k\|_{(i)}  + \beta \|X_i^k\|_{(i)}  \\ 
			&\overset{(\ref{eq:betaXk-sc})}{\leq}& t_i \eta  \|{\hat D}_i^k\|_{(i)}  +  (1+\delta_i) t_i \eta \\ 
			&\leq& 2(1+\delta_i) t_i \eta, 
		\end{eqnarray*}
		where the last  inequality uses Assumption \ref{as:inexactLMO-layer-deter} or \ref{as:inexactLMO-layer-2-deter} for Option 1, and Assumption \ref{as:inexactLMO-layer} or \ref{as:inexactLMO-layer-2} for Option 2.

		Next we estimate $\langle M^k, X^{k+1} - X \rangle$.	For the $X$ defined in (\ref{eq:X-sc}), i.e., $X = \beta X^* + (1-\beta) X^k$, we have 
		\begin{eqnarray}
			&& \langle M^k, X^{k+1} - X \rangle  \nonumber  \\
			&=& \langle M^k, X^{k+1} - (1-\beta)X^k - \beta X^* \rangle   \nonumber  \\ 
			&=& \sum_{i=1}^p \langle M_i^k, t_i \eta {\hat D}_i^k \rangle  -  \sum_{i=1}^p \langle M_i^k, \beta X_i^* \rangle  \nonumber   \\ 
			&\leq& \sum_{i=1}^p t_i \eta \langle M_i^k,  {\hat D}_i^k \rangle  +  \sum_{i=1}^p \beta \|X_i^*\|_{(i)} \cdot \|M_i^k\|_{(i)\star}  \nonumber  \\ 
			&\leq&  \sum_{i=1}^p t_i \eta \langle M_i^k,  {\hat D}_i^k \rangle   + \sum_{i=1}^p  (1-s)(1-\delta_i) t_i \eta \|M_i^k\|_{(i)\star}, \label{eq:innerdiff-sc}
		\end{eqnarray}
		where the first inequality comes from the Cauchy-Schwarz inequality. 
		
		Next we estimate $ \sum_{i=1}^p t_i \eta \langle M_i^k,  {\hat D}_i^k \rangle$. For $i$ such that (\ref{eq:hatdk-obj-layer}) or  (\ref{eq:hatdk-obj-layer-deter}) holds, we have 
		$$
		t_i \eta \langle M_i^k,  {\hat D}_i^k \rangle \leq -t_i \eta (1-\delta_i) \|M_i^k\|_{(i)\star}. 
		$$
		For $i$ such that (\ref{eq:hatdk-obj-layer}) or  (\ref{eq:hatdk-obj-layer-deter}) does not hold, under Assumption \ref{as:inexactLMO-layer} or \ref{as:inexactLMO-layer-deter}, we know $\|M_i^k \|_{(i)\star} \leq \epsilon_{1, i}$. Hence, 
		\begin{eqnarray*}
			t_i \eta \langle M_i^k,  {\hat D}_i^k \rangle &\leq& t_i \eta \|{\hat D}_i^k \|_{(i)} \cdot \|M_i^k \|_{(i)\star}  \\ 
			&\leq& t_i \eta (1+\delta_i) \|M_i^k \|_{(i)\star}  \\ 
			&=&  -t_i \eta (1-\delta_i) \|M_i^k\|_{(i)\star} +  2t_i \eta  \|M_i^k\|_{(i)\star} \\
			&\leq& -t_i \eta (1-\delta_i) \|M_i^k \|_{(i)\star} +  2t_i \eta \epsilon_{1, i}, 
		\end{eqnarray*}
		where the first inequality comes from the Cauchy-Schwarz inequality, and the second inequality is from (i) in Assumption \ref{as:inexactLMO-layer-deter}. For $i$ such that (\ref{eq:hatdk-obj-layer}) or  (\ref{eq:hatdk-obj-layer-deter}) does not hold, under Assumption \ref{as:inexactLMO-layer-2} or \ref{as:inexactLMO-layer-2-deter}, we know $\|M_i^k \|_{(i)\star} \leq \epsilon_{1, i}$ and $\langle M_i^k, {\hat D}_i^k \rangle \leq 0$. Thus, 
		\begin{eqnarray*}
			t_i \eta \langle M_i^k,  {\hat D}_i^k \rangle &\leq& 0  \\ 
			&=&  -t_i \eta (1-\delta_i) \|M_i^k\|_{(i)\star} +  t_i \eta (1-\delta_i) \|M_i^k\|_{(i)\star} \\
			&\leq& -t_i \eta (1-\delta_i) \|M_i^k\|_{(i)\star} +  (1-\delta_i) t_i \eta \epsilon_{1, i}. 
		\end{eqnarray*}
		Combining the above two cases, we arrive at 
		$$
		\sum_{i=1}^p t_i \eta \langle M_i^k,  {\hat D}_i^k \rangle \leq  -\sum_{i=1}^p t_i \eta (1-\delta_i) \|M_i^k\|_{(i)\star} +  \sum_{i=1}^p \phi_i t_i \eta \epsilon_{1, i}. 
		$$
		From the above inequality and (\ref{eq:innerdiff-sc}), we can obtain (\ref{eq:innerdiff-f-sc}). 
		
	\end{proof}

	\newpage

	\subsection{Proof of Theorem \ref{th:gluon-inexact-sc-deter}}

	First, we can upper-bound $f(X^{k+1})$ as follows. 
	\begin{eqnarray*}
		f(X^{k+1}) &\overset{(a)}{\leq}& f(X^k) + \langle \nabla f(X^k), X^{k+1} - X^k \rangle + \sum_{i=1}^p \frac{L_i^0 + L_i^1 \|\nabla_i f(X^k)\|_{(i)\star}}{2} \|X_i^{k+1} - X_i^k\|_{(i)}^2 \\ 
		&\overset{(\ref{eq:innerdiff-f-sc})}{\leq}& f(X^k) + \langle \nabla f(X^k), X-X^k \rangle  -\sum_{i=1}^p s t_i \eta (1-\delta_i) \|\nabla_i f(X^k)\|_{(i)\star} +  \sum_{i=1}^p \phi_i t_i \eta \epsilon_{1, i} \\ 
		&& +  \sum_{i=1}^p \frac{L_i^0 + L_i^1 \|\nabla_i f(X^k)\|_{(i)\star}}{2} \|X_i^{k+1} - X_i^k\|_{(i)}^2 \\ 
		&\overset{(b)}{\leq}& f(X) +   \sum_{i=1}^p \frac{L_i^0 + L_i^1 \|\nabla_i f(X^k)\|_{(i)\star}}{2} \|X_i - X_i^k\|_{(i)}^2 -\sum_{i=1}^p s t_i \eta (1-\delta_i) \|\nabla_i f(X^k)\|_{(i)\star} \\ 
		&&  +  \sum_{i=1}^p \phi_i t_i \eta \epsilon_{1, i}  +   \sum_{i=1}^p \frac{L_i^0 + L_i^1 \|\nabla_i f(X^k)\|_{(i)\star}}{2} \|X_i^{k+1} - X_i^k\|_{(i)}^2 \\  
		&\overset{(c)}{\leq}& f(X) - \sum_{i=1}^p s t_i \eta (1-\delta_i) \|\nabla_i f(X^k)\|_{(i)\star} +  \sum_{i=1}^p \phi_i t_i \eta \epsilon_{1, i} \\ 
		&& +  \sum_{i=1}^p  2t_i^2 \eta^2 (1 + (1+\delta_i)^2)(L_i^0 + L_i^1 \|\nabla_i f(X^k)\|_{(i)\star})  \\  
		&\overset{(d)}{\leq}& \beta f(X^*)  + (1-\beta) f(X^k)  - \sum_{i=1}^p t_i \eta \left(  s(1-\delta_i) - 2t_i \eta (1 + (1+\delta_i)^2)L_i^1  \right) \|\nabla_i f(X^k)\|_{(i)\star} \\ 
		&&  + \sum_{i=1}^p  2t_i^2 \eta^2 (1 + (1+\delta_i)^2) L_i^0  +   \sum_{i=1}^p \phi_i t_i \eta \epsilon_{1, i} \\ 
		&\overset{(e)}{\leq}&  \beta f(X^*)  + (1-\beta) f(X^k)  + \sum_{i=1}^p  2t_i^2 \eta^2 (1 + (1+\delta_i)^2) L_i^0  +   \sum_{i=1}^p \phi_i t_i \eta \epsilon_{1, i}, 
	\end{eqnarray*}
	where (a) and (b) use Assumption \ref{as:L0L1smooth} and Lemma 1 in \citep{riabinin2025gluon}, (c) uses (\ref{eq:Xkbound-sc-1}) and (\ref{eq:Xkbound-sc-2}) in Lemma \ref{lm:Xkbound-sc}, (d) uses the star-convexity in Assumption \ref{as:starconvex}, and (e) uses the assumption that $ s(1-\delta_i) \geq 2t_i \eta (1 + (1+\delta_i)^2)L_i^1$ for all $i$. Subtracting $f(X^*)$ on the both sides of the above inequality, we arrive at 
	$$
	f(X^{k+1}) - f(X^*) \leq  (1-\beta) (f(X^k) - f(X^*)) + \sum_{i=1}^p  2t_i^2 \eta^2 (1 + (1+\delta_i)^2) L_i^0  +   \sum_{i=1}^p \phi_i t_i \eta \epsilon_{1, i}, 
	$$
	which implies that 
	$$
	f(X^K) - f(X^*) \leq (1-\beta)^K (f(X^0) - f(X^*)) + \frac{1}{\beta}  \sum_{i=1}^p  2 (1 + (1+\delta_i)^2) L_i^0 t_i^2 \eta^2 + \sum_{i=1}^p \frac{\phi_i t_i \eta \epsilon_{1, i}}{\beta}. 
	$$

	\newpage 
	
	\subsection{Proof of Corollary \ref{co:inexact-gluon-sc-deter}}

	For $\beta = \frac{\ln K}{K}$, from the fact that $(1-x)^{\frac{1}{x}} \leq e^{-1}$ for any $0<x<1$, we have 
	\begin{eqnarray*}
		(1-\beta)^K &=& \left(  1 - \frac{\ln K}{K}  \right)^{\frac{K}{\ln K} \cdot \ln K} \\ 
		&\leq& e^{-\ln K} \\ 
		&=& \frac{1}{K}.  
	\end{eqnarray*}
	
	For the two cases, the conditions that $ s(1-\delta_i) \geq 2 (1 + (1+\delta_i)^2)L_i^1 t_i \eta$ for all $i$ are both satisfied. Hence, from Theorem \ref{th:gluon-inexact-sc-deter}, we can get the results.

	\newpage 
	
	\subsection{Proof of Theorem \ref{th:gluon-inexact-sc}}

	First, for the estimation of $\E [ \|M_i^k - \nabla_i f(X^k)\|_{(i)\star} ]$, similar to the proof of Theorem \ref{th:Inexact-gluon}, we can obtain 
	\begin{eqnarray}
		\E [ \|M_i^k - \nabla_i f(X^k)\|_{(i)\star} ] &\leq&  (1-\alpha)^k\rho \sigma + \sqrt{\alpha} \rho \sigma + \frac{2(1+\delta_i)L_i^0 t_i \eta}{\alpha}  \nonumber  \\ 
		&& +  2(1+\delta_i)L_i^1 t_i \eta \sum_{\tau=1}^k (1-\alpha)^{k+1-\tau} \E [\|\nabla_i f(X^\tau)\|_{(i)\star}], \label{eq:Mik-nablaf-sc}
	\end{eqnarray}
	by using $\|X_i^{k+1} - X_i^k\|_{(i)} \leq 2(1+\delta_i) t_i \eta$ in Lemma \ref{lm:Xkbound-sc}. It should be noticed that $\alpha$ and $\beta$ are independent parameters in Algorithm \ref{alg:gluon-sc}. Then we upper-bound $f(X^{k+1})$ as follows. 
	
	\begin{eqnarray*}
		f(X^{k+1}) &\overset{(a)}{\leq}& f(X^k) + \langle \nabla f(X^k), X^{k+1} - X^k \rangle + \sum_{i=1}^p \frac{L_i^0 + L_i^1 \|\nabla_i f(X^k)\|_{(i)\star}}{2} \|X_i^{k+1} - X_i^k\|_{(i)}^2 \\ 
		&=&  f(X^k) +  \langle \nabla f(X^k) - M^k, X^{k+1} - X^k \rangle  + \langle M^k,  X^{k+1} - X^k \rangle \\ 
		&&   + \sum_{i=1}^p \frac{L_i^0 + L_i^1 \|\nabla_i f(X^k)\|_{(i)\star}}{2} \|X_i^{k+1} - X_i^k\|_{(i)}^2 \\ 
		&\overset{(\ref{eq:innerdiff-f-sc})}{\leq}&  f(X^k) +  \langle \nabla f(X^k) - M^k, X^{k+1} - X^k \rangle +  \langle M^k,  X - X^k \rangle   +  \sum_{i=1}^p \phi_i t_i \eta \epsilon_{1, i} \\ 
		&&  -\sum_{i=1}^p s t_i \eta (1-\delta_i) \|M_i^k\|_{(i)\star}    + \sum_{i=1}^p \frac{L_i^0 + L_i^1 \|\nabla_i f(X^k)\|_{(i)\star}}{2} \|X_i^{k+1} - X_i^k\|_{(i)}^2  \\ 
		&=&  f(X^k) + \langle \nabla f(X^k), X - X^k \rangle + \langle M^k - \nabla f(X^k), X - X^{k+1} \rangle +  \sum_{i=1}^p \phi_i t_i \eta \epsilon_{1, i} \\ 
		&& -\sum_{i=1}^p s t_i \eta (1-\delta_i) \|M_i^k\|_{(i)\star}    + \sum_{i=1}^p \frac{L_i^0 + L_i^1 \|\nabla_i f(X^k)\|_{(i)\star}}{2} \|X_i^{k+1} - X_i^k\|_{(i)}^2  \\ 
		&\overset{(b)}{\leq}& f(X) +  \sum_{i=1}^p \frac{L_i^0 + L_i^1 \|\nabla_i f(X^k)\|_{(i)\star}}{2} \|X_i - X_i^k\|_{(i)}^2  + \langle M^k - \nabla f(X^k), X - X^{k+1} \rangle  \\ 
		&&  + \sum_{i=1}^p \phi_i t_i \eta \epsilon_{1, i} -\sum_{i=1}^p s t_i \eta (1-\delta_i) \|M_i^k\|_{(i)\star}    + \sum_{i=1}^p \frac{L_i^0 + L_i^1 \|\nabla_i f(X^k)\|_{(i)\star}}{2} \|X_i^{k+1} - X_i^k\|_{(i)}^2  \\ 
		&\overset{(c)}{\leq}& f(X) + \sum_{i=1}^p 2t_i \eta \| M_i^k - \nabla_i f(X^k)\|_{(i)\star}  +  \sum_{i=1}^p \phi_i t_i \eta \epsilon_{1, i}   -\sum_{i=1}^p s t_i \eta (1-\delta_i) \|M_i^k\|_{(i)\star} \\ 
		&& +  \sum_{i=1}^p  2t_i^2 \eta^2 (1 + (1+\delta_i)^2)(L_i^0 + L_i^1 \|\nabla_i f(X^k)\|_{(i)\star}), 
	\end{eqnarray*}
	where (a) and (b) use Assumption \ref{as:L0L1smooth} and Lemma 1 in \citep{riabinin2025gluon}, (c) uses the Cauchy-Schwarz inequality, (\ref{eq:Xkbound-sc-1}) and (\ref{eq:Xkbound-sc-2}) in Lemma \ref{lm:Xkbound-sc}. Then from the triangle inequality $\|M_i^k\|_{(i)\star} \geq \|\nabla_i f(X^k)\|_{(i)\star} - \|M_i^k - \nabla_i f(X^k)\|_{(i)\star}$, we can obtain 
	\begin{eqnarray*}
		f(X^{k+1}) &\leq& f(X) + \sum_{i=1}^p (2 + s(1-\delta_i)) t_i \eta  \| M_i^k - \nabla_i f(X^k)\|_{(i)\star}   -\sum_{i=1}^p s t_i \eta (1-\delta_i) \|\nabla_i f(X^k)\|_{(i)\star} \\ 
		&& + \sum_{i=1}^p \phi_i t_i \eta \epsilon_{1, i}   +  \sum_{i=1}^p  2t_i^2 \eta^2 (1 + (1+\delta_i)^2)(L_i^0 + L_i^1 \|\nabla_i f(X^k)\|_{(i)\star}) \\ 
		&\leq&  \beta f(X^*) + (1-\beta) f(X^k) + \sum_{i=1}^p  2t_i^2 \eta^2 (1 + (1+\delta_i)^2) L_i^0  +   \sum_{i=1}^p \phi_i t_i \eta \epsilon_{1, i} \\ 
		&& +  \sum_{i=1}^p (2 + s(1-\delta_i)) t_i \eta  \| M_i^k - \nabla_i f(X^k)\|_{(i)\star} \\ 
		&& - \sum_{i=1}^p t_i \eta \left(  s(1-\delta_i) - 2t_i \eta (1 + (1+\delta_i)^2)L_i^1  \right) \|\nabla_i f(X^k)\|_{(i)\star}, 
	\end{eqnarray*}
	where the last inequality comes from the star-convexity in Assumption \ref{as:starconvex}. 
	
	Combining the above inequality and (\ref{eq:Mik-nablaf-sc}), we arrive at 
	\begin{eqnarray*}
		&&\E [f(X^{k+1}) - f(X^*)]  \\ 
		&\leq& (1-\beta) \E [f(X^k) - f(X^*)]  +  \sum_{i=1}^p  2 L_i^0 t_i^2 \eta^2 (1 + (1+\delta_i)^2)   +  \sum_{i=1}^p \phi_i t_i \eta \epsilon_{1, i}   \\ 
		&& - \sum_{i=1}^p t_i \eta \left(  s(1-\delta_i) - 2t_i \eta (1 + (1+\delta_i)^2)L_i^1  \right) \E [\|\nabla_i f(X^k)\|_{(i)\star}] \\ 
		&& +  (2 + s(1-\delta_i))t_i \eta \left(   (1-\alpha)^k\rho \sigma + \sqrt{\alpha} \rho \sigma + \frac{2(1+\delta_i)L_i^0 t_i \eta}{\alpha}  \right)   \\ 
		&& +  \sum_{i=1}^p 2(2+2\delta_i + s(1-\delta_i^2))L_i^1 t_i^2 \eta^2 \sum_{\tau=1}^k (1-\alpha)^{k+1-\tau} \E [\|\nabla_i f(X^\tau)\|_{(i)\star}], 
	\end{eqnarray*}
	which implies that 
	\begin{eqnarray*}
		&& \E [f(X^K) - f(X^*)] \\ 
		&\leq& (1-\beta)^K (f(X^0) - f(X^*)) + \frac{1}{\beta}  \sum_{i=1}^p  2(1 + (1+\delta_i)^2)L_i^0 t_i^2 \eta^2  +  \sum_{i=1}^p \frac{\phi_i t_i \eta \epsilon_{1, i}}{\beta}  \\ 
		&&  +  \sum_{i=1}^p (2 + s(1-\delta_i))t_i \eta \left(  \frac{\rho\sigma}{\alpha}  +  \frac{\sqrt{\alpha} \rho \sigma}{\beta}  +   \frac{2(1+\delta_i)L_i^0 t_i \eta}{\alpha \beta}   \right) \\ 
		&& - \sum_{i=1}^p \sum_{j=0}^{K-1} (1-\beta)^{K-1-j} t_i \eta \left(  s(1-\delta_i) - 2t_i \eta (1 + (1+\delta_i)^2)L_i^1  \right) \E [\|\nabla_i f(X^j)\|_{(i)\star}] \\ 
		&&  +  \sum_{i=1}^p 10L_i^1 t_i^2 \eta^2 \sum_{j=0}^{K-1} (1-\beta)^{K-1-j} \sum_{\tau=1}^j (1-\alpha)^{j+1-\tau} \E [\|\nabla_i f(X^\tau)\|_{(i)\star}], 
	\end{eqnarray*}
	where we also use $2+2\delta_i + s(1-\delta_i^2) \leq 5$. 
	
	For $\alpha > \beta$, we have 
	\begin{eqnarray*}
		&& \sum_{j=0}^{K-1} (1-\beta)^{K-1-j} \sum_{\tau=1}^j (1-\alpha)^{j+1-\tau} \E [\|\nabla_i f(X^\tau)\|_{(i)\star}] \\ 
		&=& \sum_{\tau=1}^{K-1} \sum_{j=\tau}^{K-1} (1-\beta)^{K-1-j} (1-\alpha)^{j+1-\tau} \E [\|\nabla_i f(X^\tau)\|_{(i)\star}] \\ 
		&=&   \sum_{\tau=1}^{K-1} (1-\beta)^{K-\tau}   \E [\|\nabla_i f(X^\tau)\|_{(i)\star}] \sum_{j=\tau}^{K-1} \left(  \frac{1-\alpha}{1-\beta}  \right)^{j+1-\tau} \\ 
		&\leq& \frac{1-\beta}{\alpha-\beta}  \sum_{\tau=1}^{K-1} (1-\beta)^{K-\tau}   \E [\|\nabla_i f(X^\tau)\|_{(i)\star}] \\ 
		&=&  \frac{1-\beta}{\alpha-\beta}  \sum_{j=1}^{K-1} (1-\beta)^{K-j}   \E [\|\nabla_i f(X^j)\|_{(i)\star}]. 
	\end{eqnarray*}
	
	Then from the above two inequalities, under the assumption that $s(1-\delta_i) \geq 2t_i \eta L_i^1 (1 + (1+\delta_i)^2 + \frac{5}{\alpha - \beta})$ for all $i$, we can obtain 
	\begin{eqnarray*}
		&& \E [f(X^K) - f(X^*)] \\ 
		&\leq& (1-\beta)^K (f(X^0) - f(X^*)) + \frac{1}{\beta}  \sum_{i=1}^p  2(1 + (1+\delta_i)^2)L_i^0 t_i^2 \eta^2  +  \sum_{i=1}^p \frac{\phi_i t_i \eta \epsilon_{1, i}}{\beta}  \\ 
		&&  +   \sum_{i=1}^p (2 + s(1-\delta_i))t_i \eta \left(  \frac{\rho\sigma}{\alpha}  +  \frac{\sqrt{\alpha} \rho \sigma}{\beta}  +   \frac{2(1+\delta_i)L_i^0 t_i \eta}{\alpha \beta}   \right). 
	\end{eqnarray*}

\end{document}